\newcommand{\N}{\mathbb{N}}
\newcommand{\Z}{\mathbb{Z}}
\newcommand{\F}{\mathbb{F}}
\newcommand{\Hom}{\text{\rm Hom}}
\newcommand{\End}{\text{\rm End}}
\newcommand{\n}{\mathcal{N}}
\newcommand{\MM}{\text{\rm MM}}
\newcommand{\SM}{\text{\rm SM}}
\newtheorem*{theo}{Theorem}
{\theoremstyle{plain}
\newtheorem{thm}{Theorem}[subsection]}
\newtheorem{lem}[thm]{Lemma}
\newtheorem{prop}[thm]{Proposition}
\newtheorem{cor}[thm]{Corollary}
{\theoremstyle{definition}
\newtheorem{ex}[thm]{Example}
\newtheorem{rmq}[thm]{Remark}
\newtheorem{defi}[thm]{Definition}}
\begin{document}
\title{Some algorithms for skew polynomials over finite fields}
\author{Xavier Caruso, J{\'e}r{\'e}my Le Borgne}
\date{}
\maketitle
\begin{abstract}
In this paper, we study the arithmetics of skew polynomial rings over finite fields, mostly from an algorithmic point of view. We give various algorithms for fast multiplication, division and extended Euclidean division. We give a precise description of quotients of skew polynomial rings by a left principal ideal, using results relating skew polynomial rings to Azumaya algebras. We use this description to give a new factorization algorithm for skew polynomials, and to give other algorithms related to factorizations of skew polynomials, like counting the number of factorizations as a product of irreducibles.
\end{abstract}
\setcounter{tocdepth}{2}
\tableofcontents
\begin{center}
\line(1,0){300}
\end{center}
\section*{Introduction}
The aim of this paper is to present several algorithms to deal efficiently with rings of skew polynomials over finite fields. These noncommutative rings have been widely studied, including from an algorithmic point of view, since they were first introduced by Ore in 1933. The main applications for the study of skew polynomials over finite fields are for error-correcting codes.
The first significant results in terms of effective arithmetics in these rings, including an algorithm to factor a skew polynomial as a product of irreducibles, appear in Giesbrecht's paper \cite{gie2}. In this paper, we give a factorization algorithm whose complexity improves on Giesbrecht's. We also describe various fast-multiplication algorithms for skew polynomials, and some additional algorithms such as a factorization-counting algorithm, or an algorithm generating the uniform distribution on the factorizations of a given skew polynomial.\\
The first part of the article is mostly theoretical. Let $k$ be a finite field of characteristic $p$, and let $\sigma$ be an automorphism of $k$. We denote by $k^\sigma$ the subfield of $k$ fixed by $\sigma$, and by $r$ the order of $\sigma$ on $k$. The ring $k[X,\sigma]$ of skew polynomials with coefficients in $k$ is a noncommutative ring, on which multiplication is determined by $X \cdot a = \sigma(a) \cdot X$ for all $a \in k$. 
The first Theorem we shall prove is the following:
\begin{theo}[\emph{cf} Theorem \ref{azumaya}]
The ring $k[X,\sigma][1/X]$ is an Azumaya algebra over its centre $k^\sigma[X^r][1/X^r]$.
\end{theo}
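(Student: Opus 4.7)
The plan is to verify the \emph{fibre criterion} for Azumaya algebras: an associative unital algebra $B$ over a commutative ring $Z$ contained in its centre is Azumaya as soon as $B$ is a finitely generated faithful projective $Z$-module and each fibre $B \otimes_Z \kappa(\mathfrak{m})$ is a central simple $\kappa(\mathfrak{m})$-algebra for every maximal ideal $\mathfrak{m}$ of $Z$.

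My first task would be to compute the centre of $B := k[X,\sigma][1/X]$ directly: imposing that a general element $\sum_i a_i X^i$ commute with $X$ and with every $a \in k$ forces $a_i \in k^\sigma$ and $\sigma^i = \text{id}$, hence $r \mid i$. So $Z(B) = k^\sigma[X^r, X^{-r}] =: Z$. Setting $t := X^r$, one then verifies that $B$ is free over $Z$ with basis $\{\alpha X^j : \alpha \in \mathcal{B},\ 0 \leq j < r\}$, where $\mathcal{B}$ is any $k^\sigma$-basis of $k$; in particular $B$ has rank $r^2$ over $Z$.

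The heart of the argument is the fibre condition. A maximal ideal of $Z$ is generated by an irreducible polynomial $P(t) \in k^\sigma[t]$ with $P(0) \neq 0$; let $F = k^\sigma[t]/(P)$ denote the residue field. Then
$$B/\mathfrak{m} B \;=\; k[X,\sigma]/(P(X^r))$$
is an $F$-algebra of dimension $r^2$. The subalgebra generated by $k$ and $F$ is $L := k \otimes_{k^\sigma} F$, an \'etale cyclic Galois $F$-algebra with group $\langle \sigma \rangle$ (since $k/k^\sigma$ is cyclic Galois and this structure is preserved by \'etale base change). The element $X$ normalises $L$ via $X\alpha = \sigma(\alpha) X$ and satisfies $X^r = \bar t \in L^\times$, exhibiting $B/\mathfrak{m}B$ as the crossed-product algebra $(L/F, \sigma, \bar t)$, which is central simple over $F$.

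The main obstacle will be handling the case where $P$ splits over $k$, so that $L$ is a product of fields rather than a single extension. This is precisely the setting the crossed-product formalism is designed for; alternatively, after base change to an algebraic closure $\overline{F}$ the crossed product trivialises via Hilbert 90 to a matrix algebra $M_r(\overline{F})$, whence the central simplicity of $B/\mathfrak{m}B$ over $F$ by faithfully flat descent. Combining this fibre-wise verification with the freeness from the preceding step, the fibre criterion yields that $B$ is Azumaya over $Z$.
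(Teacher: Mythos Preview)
Your argument is correct and follows the same overarching strategy as the paper---verify central simplicity fibre by fibre over the centre---but the verification at each closed point is carried out quite differently. The paper proves simplicity of $\mathcal{R}/N\mathcal{R}$ by a direct, elementary ``minimal support'' argument: pick a nonzero element of a two-sided ideal with the fewest nonzero coefficients, and use the commutators $P - XPX^{-1}$ and $P - \sigma^d(a)^{-1}Pa$ to strip terms until only a central (hence invertible) element remains; the centre is then computed by hand from the commutation relations. You instead identify each fibre $B/\mathfrak{m}B$ as the cyclic crossed product $(k \otimes_{k^\sigma} F,\,\sigma,\,\bar t)$ and invoke the standard fact that such algebras are central simple over $F$ (reducing, if necessary, to $M_r(\bar F)$ after base change). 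Your route is more structural and immediately explains \emph{why} the fibre has the shape $M_r(E_N)$ appearing in the paper's Corollary~\ref{matrixisomorphism}; the paper's route is more elementary and self-contained, requiring no background on Galois ring extensions or crossed products. Two minor remarks: you check only maximal ideals while the paper checks all primes (citing Jacobson for the generic fibre), but since $B$ is free of finite rank over the Noetherian ring $Z$ your version of the criterion suffices; and the appeal to ``Hilbert 90'' for the splitting over $\bar F$ is really the vanishing of $\mathrm{Br}(\bar F)$ (an $H^2$ statement) rather than $H^1$, though the conclusion is of course unaffected.
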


This Theorem has many important consequences for our purpose. The first 
one is the existence of a \emph{reduced norm} map $k[X,\sigma] \to 
k[X^r]$, which turns out to have very nice properties related to 
factorizations. For instance, we shall explain how it can be used to 
establish a close link between factorizations of a skew polynomial and 
basic linear algebra over finite extensions of $k^\sigma$. 
As an illustration, we will show how to use this theory to derive a 
formula giving the number of factorizations of any skew polynomial.

The second part of the paper deals with algorithmic aspects of skew 
polynomials. We start by giving various fast-multiplication algorithms 
and, as usual, we derive from them efficient algorithms to compute 
Euclidean division and gcd. Then, we reach the core algorithm of this 
paper: the factorization algorithm. Making an intensive use of the 
theory developped in the first part, we obtain a very efficient 
algorithm to factor a skew polynomial as a product of irreducible skew 
polynomials, \texttt{SkewFactorization}.
\begin{theo}[\emph{cf} Theorem \ref{complexity}]
The algorithm \texttt{SkewFactorization} factors a skew polynomial of 
degree $d$ in $k[X,\sigma]$ with complexity
$$\tilde{O}(dr^3 \log q + d \log^2 q + d^{1+\varepsilon} (\log q)^{1 + o(1)} + F(d,k^\sigma))$$
bit operations, for all $\varepsilon > 0$. 
Here, $F(d,K)$ denotes the complexity of the factorization of a 
(commutative) polynomial of degree $d$ over the finite field $K$.
\end{theo}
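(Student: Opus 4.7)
The plan is to analyze the algorithm \texttt{SkewFactorization} stage by stage, matching each stage with one of the terms in the claimed complexity. Building on the structure suggested by the first part of the paper, I would organize the algorithm into three logical phases: (a) compute the reduced norm $\n(P) \in k^\sigma[X^r]$ of the input $P \in k[X,\sigma]$; (b) factor $\n(P)$ as a commutative polynomial over $k^\sigma$; (c) use the Azumaya structure of $k[X,\sigma][1/X]$ to lift each commutative irreducible factor of $\n(P)$ to the corresponding irreducible skew factors of $P$, via linear algebra over extensions of $k^\sigma$.

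For phase (a), after isolating any power of $X$ appearing in $P$, the reduced norm of a skew polynomial of degree $d$ is a commutative polynomial of degree $d$ (in $X$). I would compute $\n(P)$ either as the determinant of the multiplication-by-$P$ matrix of size $r$ over $k^\sigma[X^r]$, or as a product of $\sigma$-conjugates in a suitable splitting extension, using the fast skew multiplication developed earlier. The natural cost of this step is the $\tilde{O}(dr^3 \log q)$ term, reflecting the $r\times r$ linear-algebra or Galois-product structure with entries in rings of polynomials of degree $d/r$ over $k$.

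Phase (b) is by definition a commutative factorization of $\n(P)$ over $k^\sigma$, a polynomial of degree $d$ (in $X$) over a field of size $q^{1/r}$; this contributes exactly $F(d,k^\sigma)$. Phase (c) is where the theory of the first part plays its decisive role: from Theorem \ref{azumaya}, irreducible skew factors of $P$ with a given reduced norm $f$ correspond to simple modules over a (split) Azumaya algebra over a field extension of $k^\sigma$ of degree $\deg f/r$. Extracting them amounts to solving kernel/eigenvalue problems for matrices of size $r$ over those extensions, plus Frobenius-type modular exponentiations (to locate roots and isotypic components), which together account for the $\tilde{O}(d \log^2 q)$ term. All auxiliary skew arithmetic throughout the algorithm, and in particular the divisions that isolate each irreducible factor inside $k[X,\sigma]$, is performed using the fast-multiplication/fast-Euclidean routines of the paper, accounting for the $d^{1+\varepsilon}(\log q)^{1+o(1)}$ term.

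The main obstacle is phase (c): one must avoid a naive approach in which the full degree $d$ enters into every extension computation. The point is to organize the lifting so that, for each irreducible factor $f$ of $\n(P)$ of degree $\delta$, the linear-algebra work is controlled by $\delta$ and by $r$ (independently of the other factors), and so that the total cost telescopes to the stated bound. This requires careful bookkeeping of the reductions modulo each irreducible skew factor already produced, combined with the fact that the Azumaya algebra splits over the extension attached to $f$, so that eigenvector extraction really does produce a bona fide skew divisor of $P$ rather than merely an element of a quotient. Once this is set up, the remaining complexity analysis reduces to summing the contributions of the three phases, yielding the claimed bound.
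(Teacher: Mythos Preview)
Your three-phase decomposition matches the paper's structure, and your accounting for the norm computation and the commutative factorization is correct (a minor slip: in the paper's convention $q = \#k^\sigma$, so $k^\sigma$ has size $q$, not $q^{1/r}$). However, your attribution of the remaining two terms is wrong, and this reflects a real gap in the analysis of phase~(c).

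The term $d^{1+\varepsilon}(\log q)^{1+o(1)}$ does \emph{not} come from skew arithmetic. All skew multiplications, Euclidean divisions, and gcd/lcm computations in the algorithm cost $\tilde{O}(\SM(\cdot,r))$ operations in $k^\sigma$, and since $\SM(n,r) = \tilde{O}(nr^2)$ these are entirely absorbed by the $\tilde{O}(dr^3 \log q)$ term. The $d^{1+\varepsilon}(\log q)^{1+o(1)}$ contribution arises instead from the root-finding step inside each type-$(e)$ piece: for a piece with norm $N^e$ and $\deg N = \delta$, one must decide whether a degree-$e$ polynomial over $E = k^\sigma[X^r]/(N)$ has a root in $E$, which requires computing $T^{\#E} = T^{q^\delta}$ modulo that polynomial. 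The paper does this by first raising $T$ to the $q$-th power via fast exponentiation (this is the $\delta \log^2 q$ contribution) and then iterating the Frobenius via $O(\log\delta)$ \emph{modular compositions} using the Kedlaya--Umans algorithm, contributing $e^{1+\varepsilon}(\delta \log q)^{1+o(1)}$. Summed over all pieces ($\sum \delta_i e_i = d$), these yield exactly the $d\log^2 q$ and $d^{1+\varepsilon}(\log q)^{1+o(1)}$ terms. Without invoking Kedlaya--Umans modular composition or an equivalent, your phase~(c) cannot meet the stated bound; naive exponentiation to $T^{q^\delta}$ would be too slow.

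A secondary omission: between phases (b) and (c) the paper inserts a nontrivial reduction (\texttt{Type\_e\_Factorization}) that splits $P$ into factors each dividing a single irreducible central polynomial, and proves this costs $\tilde{O}(dr^3)$ via a divide-and-conquer on the degrees of the $N_i$. Your proposal alludes to this (``organize the lifting so that the total cost telescopes'') but does not identify it as a separate step with its own complexity argument; that analysis is not automatic.
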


In \cite{ku}, Kedlaya and Umans described a factorization algorithm
of polynomials over finite fiels whose complexity is:
$$F(d,K) = (d^{3/2 + o(1)} + d^{1 + o(1)}\log q) \cdot (\log q)^{1+o(1)}$$ 
bit operations, where $q$ is the cardinality of $K$. Assuming this value 
for $F(d,K)$, we see that the terms $d \log^2 q$ and $d^{1 + \varepsilon} 
(\log q)^{1 + o(1)}$ are negligible compared to $F(d,K)$. If 
furthermore $r^3 \ll d$, also is the term $dr^3 \log q$. With this extra 
assumption, the complexity of our algorithm is then comparable to the 
complexity of the factorization of a \emph{commutative} polynomial of 
the same degree.

The complexity of our algorithm should be compared to the complexity of 
Giesbrecht's algorithm, which is:
$$\tilde O(d^4 r^2 \log q + d^3 r^3 \log q + d \cdot \text{MM}(dr) \log q + d^2 r \cdot \log^2 q)$$
bit operations\footnote{In Giesbrecht's paper, the complexity is given
in number of operations in $k^\sigma$. Since any operation in $k^\sigma$
requires $\tilde O(\log q)$ bit operations (using fast algorithms), the
complexity we have given is just from Giesbrecht's one by multiplying
by $\tilde O(\log q)$.}
where $\text{MM}(n)$ is the complexity of the multiplication of two
$n \times n$ matrices.

The strategy of our algorithm is roughly comparable to the one of 
Giesbrecht's: in order to factor $P$, we find a multiple $N$ of $P$ 
lying in the centre of $k[X,\sigma]$, we factor $N$ in the centre (which 
is a commutative polynomial ring) and we recover a factorization of $P$ 
from the factorization of $N$ we have just computed.
The two main improvments are the following. Firstable, we obtain better
algorithms to achieve basic operations (like multiplication, Euclidean
division and gcd's). Using them to factor a polynomial improves
significantly the complexity. The second improvement (which is the most 
important) consists in taking a large benefit of the closed study of all 
involved objects we have done in the first part. For instance, in order 
to obtain the central multiple $N$, we just compute the reduced norm, 
for which efficient algorithms exist. In the same way, our theoretical 
results imply that, for some particular $P$, the quotient $k[X,\sigma] / 
k[X,\sigma] P$ is endowed with a rich structure and we use it to replace 
computations with large matrices over $k^\sigma$ by computations with 
matrices of size at most $r$ defined over a bigger field. Since usual
arithmetics in field extensions is more efficient than computations
with matrices (quasilinear \textsc{vs} subcubic), we gain a lot.

Eventually, we give an algorithm to compute the number of factorizations 
of a skew polynomial and we describe an algorithm to generate the 
uniform distribution on the factorizations of a skew polynomial.

All the algorithms described here have been implemented in \textsc{sage}, and some of them in \textsc{magma}. We discuss briefly about the implementation.

\medskip

This work was supported by the \emph{Agence Nationale de la Recherche},
CETHop project, number ANR-09-JCJC-0048-01.

\section{The ring $k[X,\sigma]$}
\subsection{Some facts about $k[X,\sigma]$}
Let $k$ be a finite field of characteristic $p$ and let $\sigma$ be an automorphism of $k$. We denote by $k^\sigma$ the subfield of $k$ fixed by $\sigma$. Let $r$ be the order of $\sigma$: $r$ is also the degree of the extension $k/k^\sigma$. We denote by $k[X,\sigma]$ the ring of skew polynomials with coefficients in $k$. The underlying group is just $k[X]$, and the multiplication is determined by the rule
$$ \forall a \in k, ~ Xa = \sigma(a)X.$$
We recall some notions from \cite{jac}, Chapter 1 (mainly \S 1.1 and 1.2). The centre of $k[X,\sigma]$ is $k^\sigma[X^r]$. The ring $k[X,\sigma]$ is endowed with left- and right-euclidean division algorithms. Hence, there are also notions of right- and left-greatest common divisor, and left- and right-lowest common multiple (denoted respectively by rgcd, lgcd, llcm, rlcm). Of course, every element of $k[X,\sigma]$ can be written as a product of irreducible elements of $k[X,\sigma]$. However, a such factorization is not unique in general. The first result that describes how two factorizations of a skew polynomial as a product of irreducibles are related is due to Ore. Before stating it, let us give a definition:
\begin{defi}
Let $P,Q \in k[X,\sigma]$ be two skew polynomials. Then $P$ and $Q$ are \emph{similar} if there exist $U,V \in k[X,\sigma]$ such that $\text{rgcd}(P,V) = 1$, $\text{llcm}(Q,U) =1$ and $UP = QV$.
\end{defi}
Even though it may not be clear at first glance, this is an equivalence relation. Remark that in the case $\sigma = \text{id}$, this just means that $P$ and $Q$ are equal up to multiplication by an element of $k^\times$.
We then have the following theorem:
\begin{theo}[Ore, \cite{ore}]
Let $P_1, \ldots, P_n$ and $Q_1, \ldots, Q_m$ be irreducible skew polynomials. If $P_1\cdots P_n = Q_1\cdots Q_m$, then $m = n$ and there exists a permutation $\tau$ of $\{1, \ldots, n\}$ such that for all $1 \leq i \leq n$, $P_i$ is similar to $Q_{\tau(i)}$.
\end{theo}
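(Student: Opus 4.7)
The plan is to recast the statement as an instance of the Jordan--Hölder theorem for the finite-length left $k[X,\sigma]$-module $M = k[X,\sigma]/k[X,\sigma] P$, where $P$ denotes the common value $P_1 \cdots P_n = Q_1 \cdots Q_m$. Since left Euclidean division presents $M$ as a $k$-vector space of dimension $\deg P$, it has finite length as a left $k[X,\sigma]$-module, so the theorem will apply.

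Each factorization of $P$ should give rise to a composition series of $M$. For the first factorization I would set $R_i = P_{i+1} \cdots P_n$ (so $R_0 = P$ and $R_n = 1$) and consider the chain of left ideals
\[
k[X,\sigma] P \;=\; k[X,\sigma] R_0 \;\subsetneq\; k[X,\sigma] R_1 \;\subsetneq\; \cdots \;\subsetneq\; k[X,\sigma] R_n \;=\; k[X,\sigma].
\]
The inclusion at step $i$ is immediate from $R_{i-1} = P_i R_i$, and strictness follows from $P_i$ being a non-unit. Using that $k[X,\sigma]$ is a domain, the map $a \mapsto a R_i$ should induce a left-module isomorphism
\[
k[X,\sigma] R_i \,/\, k[X,\sigma] R_{i-1} \;\cong\; k[X,\sigma]/k[X,\sigma] P_i,
\]
which is simple because $P_i$ is irreducible. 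Quotienting by $k[X,\sigma] P$ then yields a composition series of $M$ of length $n$ whose $i$th factor is $k[X,\sigma]/k[X,\sigma] P_i$. Running the analogous construction on $Q_1 \cdots Q_m$ produces a second composition series of the same module, of length $m$, with factors $k[X,\sigma]/k[X,\sigma] Q_j$; the Jordan--Hölder theorem will then force $m = n$ and provide a permutation $\tau$ such that $k[X,\sigma]/k[X,\sigma] P_i \cong k[X,\sigma]/k[X,\sigma] Q_{\tau(i)}$ as left modules for every $i$.

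The hard part will be to translate this abstract isomorphism of simple modules into the concrete similarity relation of the definition. Given an isomorphism $\varphi \colon k[X,\sigma]/k[X,\sigma] Q \to k[X,\sigma]/k[X,\sigma] P$, I would lift $\varphi(\bar 1)$ to an element $V \in k[X,\sigma]$; the annihilation $Q \cdot \varphi(\bar 1) = 0$ then supplies some $U$ with $UP = QV$, surjectivity of $\varphi$ yields $\text{rgcd}(P,V) = 1$, and a careful computation of the kernel of $a \mapsto \overline{aV}$ in terms of the left lowest common multiple of $V$ and $P$ should extract the remaining condition on $\text{llcm}(Q,U)$. Conversely, I would check that any relation $UP = QV$ satisfying the two coprimality conditions defines, via $\bar a \mapsto \overline{aV}$, an isomorphism between the two cyclic quotients. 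This matching of gcd/lcm bookkeeping with module isomorphism is where the bulk of the work will sit; the Jordan--Hölder input itself is essentially formal once the composition series above are in hand.
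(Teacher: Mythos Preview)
Your proposal is correct and follows precisely the route the paper itself sketches: immediately after stating Ore's theorem, the paper remarks that two skew polynomials are similar if and only if the associated $\varphi$-modules $D_P$ and $D_Q$ are isomorphic, and that ``Ore's theorem is just a restatement of the Jordan--H\"older Theorem in the category of $\varphi$-modules.'' The paper does not spell out the dictionary between factorizations and composition series, nor the translation between module isomorphism and the explicit $UP=QV$ definition of similarity; your outline fills in exactly those details, so there is nothing to add.
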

However, the converse of this theorem is false. In general, if the $P_i$ and $Q_i$ are pairwise similar, $\prod  P_i$ and $\prod Q_i$ are not even similar.\\
An interesting point of view on skew polynomials is that of $\varphi$\emph{-modules} that we shall elaborate on later. For now, it is enough to say that a $\varphi$-module over $k$ is a $k[X,\sigma]$-module of finite type. If $P \in k[X,\sigma]$ is nonzero, a typical example of a $\varphi$-module over $k$ is $k[X,\sigma]/k[X,\sigma]P$, which is ``the $\varphi$-module associated to $P$''. Then, two skew polynomials are similar if and only if the associated $\varphi$-modules are isomorphic, and Ore's theorem is just a restatement of the Jordan-H{\"older} Theorem in the category of $\varphi$-modules.
\subsection{The ring $k[X,\sigma][1/X]$ is an Azumaya algebra}
The aim of this section is to prove the Theorem \ref{azumaya} and to give several consequences. Let us now recall the statement of the Theorem:
\begin{thm}\label{azumaya}
The ring $k[X,\sigma][1/X]$ is an Azumaya algebra over $k^\sigma[X^r][1/X^r]$.
\end{thm}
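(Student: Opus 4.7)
The plan is to exhibit a faithfully flat $Z$-algebra $R$ such that $A \otimes_Z R \cong M_r(R)$, where $A = k[X,\sigma][1/X]$ and $Z = k^\sigma[X^r][1/X^r]$. Combined with $A$ being a finitely generated, faithful, projective $Z$-module, this will identify $A$ as an Azumaya $Z$-algebra via the classical characterization by faithfully flat trivialization.

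First I would verify that $A$ is a free $Z$-module of rank $r^2$: if $(e_0,\ldots,e_{r-1})$ is a $k^\sigma$-basis of $k$, then $(e_i X^j)_{0 \le i,j < r}$ is a $Z$-basis of $A$. Indeed, given any $\sum_n a_n X^n \in A$ with $a_n \in k$, one writes $n = rq + j$ with $0 \le j < r$ and expands each $a_n$ on the $e_i$; since $X^{rq} \in Z$ is central, regrouping yields a unique expression $\sum_{i,j} c_{ij}\, e_i X^j$ with $c_{ij} \in Z$.

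Next I would introduce the commutative Laurent polynomial ring $R = k^\sigma[t,t^{-1}]$, equipped with the $Z$-algebra structure sending $X^r$ to $t^r$; then $R$ is free of rank $r$ over $Z$ (basis $1, t, \ldots, t^{r-1}$), hence faithfully flat. Setting $R' := R \otimes_{k^\sigma} k = k[t,t^{-1}]$, the extension $R'/R$ is Galois with group $\langle \sigma \rangle$ (acting on the $k$-factor and fixing $t$). Inside $A \otimes_Z R$, which is a $Z$-algebra in which $R$ is central, the relation $X^r = t^r$ holds by construction. The key move is to introduce $T := X t^{-1}$: one checks directly that $T^r = 1$ and $T\lambda = \sigma(\lambda) T$ for every $\lambda \in k$, and that $A \otimes_Z R$ is generated as an $R'$-algebra by $T$ subject to exactly these two relations. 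Equivalently, $A \otimes_Z R$ is the crossed product $R' \#_\sigma \langle \sigma \rangle$ with trivial cocycle.

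Finally I would invoke the classical fact that for a Galois extension of commutative rings $R'/R$ with finite group $G$, the crossed product $R' \# G$ with trivial cocycle is canonically isomorphic to $\End_R(R')$ via $r' \cdot g \mapsto (x \mapsto r'\, g(x))$. Since $R'$ is free of rank $r$ over $R$, this yields $A \otimes_Z R \cong M_r(R)$, which, by the trivialization criterion, shows that $A$ is Azumaya over $Z$ of rank $r^2$. The main technical obstacle will be the identification of $A \otimes_Z R$ as a trivial crossed product: it demands that one carefully distinguish the two copies of $X$ (the skew variable in $A$ and the commutative variable $t$ in $R$) and check that the single gluing relation $X^r = t^r$, together with the skew commutation $X\lambda = \sigma(\lambda) X$, really does force exactly the presentation described; everything else reduces to formal manipulations and to the standard Galois theory of commutative rings.
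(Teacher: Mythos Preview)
Your proof is correct, and it takes a genuinely different route from the paper's. The paper verifies the Azumaya property fiber-by-fiber: for each prime $\mathfrak{P}$ of $Z=k^\sigma[X^r][1/X^r]$ it shows directly that $A\otimes_Z\operatorname{Frac}(Z/\mathfrak{P})$ is central simple, using a short minimality argument to prove simplicity and a bare-hands computation of the center. Your argument instead produces a single global faithfully flat splitting $A\otimes_Z R\cong M_r(R)$ by recognizing the base change as a trivial crossed product for the Galois extension $k[t,t^{-1}]/k^\sigma[t,t^{-1}]$. This is more structural: it explains \emph{why} the algebra is Azumaya (it is a form of a matrix algebra, split after adjoining an $r$-th root of $X^r$), it immediately yields the rank $r^2$ and the identification of the center, and it makes later consequences such as Corollary~\ref{matrixisomorphism} essentially transparent. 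The paper's approach, by contrast, is more elementary and self-contained, requiring no appeal to Galois theory of commutative rings or to the trivialization criterion for Azumaya algebras. Your caution about the identification $A\otimes_Z R\cong R'\#\langle\sigma\rangle$ is well placed but easily discharged: the map determined by $\lambda\mapsto\lambda$ and $u_\sigma\mapsto Xt^{-1}$ is a well-defined surjective $R$-algebra homomorphism between free $R$-modules of rank $r^2$, hence an isomorphism.
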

\begin{proof}
Let us denote by $\mathcal{R}$ the ring $k[X,\sigma][1/X]$ and by $\mathcal{C}$ its centre $k^\sigma[X^r][1/X^ r]$. By definition, it is enough to show that for every prime ideal $\mathfrak{P}$ of $C$, $\mathcal{R/\mathfrak{P}} \otimes_\mathcal{C} \text{Frac}(C/\mathfrak{P})$ is a central simple algebra over $\text{Frac}(C/\mathfrak{P})$. The case $\mathfrak{P} = (0)$ is exactly \cite{jac}, Theorem 1.4.6. The other prime ideals of $\mathcal{C}$ are of the form $(N)$ with $N \in k^\sigma[X^r]$ monic irreducible and different from $X^r$. Fix such an irreducible polynomial $N$. Denote by $E$ the field of fractions of $\mathcal{C}/(N)$. Let us first show that $\mathcal{R}_N = \mathcal{R}\otimes_\mathcal{C}E$ is simple. Let $I \subset \mathcal{R}_N$ be a two-sided ideal. Assume that $I \neq (0)$, and let $x \in \mathcal{R}_N$ be a nonzero element of $I$. First remark that every element of $\mathcal{R}_N$ can be written as $P \otimes 1$. Indeed, if $t$ is the class of $X^r$ is $E = \mathcal{C}/(N)$, then $1 \otimes t = X^r \otimes 1$. Therefore, we can write $x = P \otimes 1$ with $P \in k[X,\sigma]/(N)$. Now assume that $x$ and $P$ are chosen such that the number of nonzero coefficients of $P$ is minimal (with $x \in I \setminus \{0\}$). We can assume that $P$ is monic of degree $d$. We have $P - XPX^{-1} \in I$, and this polynomial has less nonzero coefficients than $P$, so that it is zero. Similarly, if $a \in k^\times$, $P - \sigma^d(a)^{-1}Pa = 0$. This shows that $x$ is central. Since the centre of $\mathcal{R}_N$ is a commutative finite integral $E$-algebra, it is a field, so $x$ is invertible and $I = \mathcal{R}_N$.\\
It remains to prove that this centre is exactly $E$. We just need to solve the equations $X\sum_{i = 0}^{\deg(N) -1} a_i X^i = \sum_{i = 0}^{\deg(N) -1} a_i X^{i+1}$ and $\alpha\sum_{i = 0}^{\deg(N) -1} a_i X^i = \sum_{i = 0}^{\deg(N) -1} a_i X^{i}\alpha$ for $\alpha$ a generator of $k/k^\sigma$. It is easy to see that the solutions are exactly (the reduction modulo $N$ of) elements of $k^\sigma[X^r]$, so that the centre of $\mathcal{R}_N$ is $E$.
\end{proof}
This result has various corollaries that are interesting for questions about factoring skew polynomials.
\begin{cor}\label{morita}
Let $N \in k^\sigma[X^r]$ be a nonzero polynomial that is not a power of $X$. Then $\mathcal{C}/N\mathcal{C}$ and $\mathcal{R}/N\mathcal{R}$ are Morita-equivalent.
\end{cor}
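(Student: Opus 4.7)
The plan is to leverage Theorem \ref{azumaya} and reduce Morita equivalence to Brauer-triviality. Since the Azumaya property is stable under base change, $\mathcal{R}/N\mathcal{R} = \mathcal{R} \otimes_{\mathcal{C}} (\mathcal{C}/N\mathcal{C})$ is Azumaya over $\mathcal{C}/N\mathcal{C}$; showing it is Morita equivalent to its centre then amounts to showing that it represents the trivial class in the Brauer group, equivalently that it is of the form $\End_{\mathcal{C}/N\mathcal{C}}(P)$ for some faithful finitely generated projective $\mathcal{C}/N\mathcal{C}$-module $P$.

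As a first step, I would reduce to the local case via the Chinese remainder theorem. Because $X^r$ is a unit in $\mathcal{C}$, I may absorb any $X^r$-factor of $N$ and assume $N = \prod_{i=1}^s N_i^{e_i}$, with the $N_i$ pairwise distinct monic irreducibles in $k^\sigma[X^r]$ all different from $X^r$; the hypothesis that $N$ is not a power of $X$ ensures $s \geq 1$. The ideals $N_i^{e_i}\mathcal{R}$ are pairwise comaximal and central in $\mathcal{R}$, so CRT yields compatible splittings $\mathcal{C}/N\mathcal{C} \simeq \prod_i R_i$ and $\mathcal{R}/N\mathcal{R} \simeq \prod_i A_i$, where $R_i := \mathcal{C}/N_i^{e_i}\mathcal{C}$ is a local Artinian (hence complete) ring with finite residue field $E_i := \mathcal{C}/N_i\mathcal{C}$, and $A_i := \mathcal{R}/N_i^{e_i}\mathcal{R}$. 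Since Morita equivalence is preserved under finite products, it suffices to prove $A_i \sim R_i$ factor by factor.

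The heart of the argument is then the local case. Reducing modulo the maximal ideal $\mathfrak{m}_i \subset R_i$, the fibre $A_i/\mathfrak{m}_i A_i = \mathcal{R}/N_i\mathcal{R}$ is a central simple $E_i$-algebra by Theorem \ref{azumaya}; since $E_i$ is finite, Wedderburn forces it to be a full matrix algebra $M_n(E_i)$. I would then lift this trivialisation to $R_i$: because $\mathfrak{m}_i$ is nilpotent, a complete set of matrix units in $A_i/\mathfrak{m}_i A_i$ can be lifted to orthogonal idempotents and then to matrix units inside $A_i$ by the standard idempotent-lifting argument. This yields $A_i \simeq M_n(R_i) = \End_{R_i}(R_i^n)$, and $R_i^n$ is an obvious $R_i$-progenerator, so classical Morita theory concludes.

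The main obstacle is this lifting step, because Morita equivalence is genuinely stronger than the Azumaya property and one must exhibit a progenerator over the non-reduced ring $R_i$, not merely over its residue field. The cleanest self-contained route seems to be idempotent lifting through the nilpotent ideal $\mathfrak{m}_i$, taking care that the lifts are mutually orthogonal and commute appropriately with the off-diagonal generators; alternatively, one may quote that the Brauer group of a Henselian (here Artinian) local ring coincides with that of its residue field, which vanishes for finite fields by Wedderburn. Assembling the factors at the end is routine.
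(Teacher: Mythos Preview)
Your proof is correct and follows essentially the same strategy as the paper: base-change the Azumaya property to $\mathcal{C}/N\mathcal{C}$ and then invoke Brauer triviality to conclude Morita equivalence. The only difference is one of packaging: the paper's proof is a single line citing that the Brauer group of any finite commutative ring is trivial, whereas you unpack precisely this fact by hand via CRT, Wedderburn on the finite residue fields, and idempotent lifting through the nilpotent radicals.
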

\begin{proof}
Since $\mathcal{R}$ is an Azumaya algebra over $\mathcal{C}$, $\mathcal{R}/N\mathcal{R}$ is an Azumaya algebra over $\mathcal{C}/N\mathcal{C}$.  Since $\mathcal{C}/N\mathcal{C}$ is a finite commutative ring, its Brauer group is trivial, hence $\mathcal{R}/N\mathcal{R}$ and $\mathcal{C}/N\mathcal{C}$ represent the same class in this Brauer group.
\end{proof}
\begin{cor}\label{matrixisomorphism}
Let $N \in k^\sigma[X^r]$ be an irreducible polynomial different from $X$. Let $E_N$ be the quotient field $\mathcal{C}/(N)$. Then
$$ \mathcal{R}/N\mathcal{R} \simeq \mathcal{M}_r(E_N),$$
the ring of $r \times r$ matrices with coefficients in $E_N$.
\end{cor}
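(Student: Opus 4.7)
The plan is to leverage Corollary \ref{morita} and use the fact that over a finite field every central simple algebra is a matrix algebra, then pin down the size of the matrices by a dimension count.

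First I would observe that since $N$ is irreducible in $k^\sigma[X^r]$ and coprime to $X^r$, the quotient $E_N = \mathcal{C}/(N)$ is already a field (it coincides with $k^\sigma[X^r]/(N)$, a finite extension of $k^\sigma$). Applying Corollary \ref{morita}, $\mathcal{R}/N\mathcal{R}$ is an Azumaya algebra over $E_N$. An Azumaya algebra over a field is exactly a central simple algebra, so $\mathcal{R}/N\mathcal{R}$ is a CSA over $E_N$. Since $E_N$ is finite, Wedderburn's little theorem implies that its Brauer group is trivial, hence
\[
\mathcal{R}/N\mathcal{R} \;\simeq\; \mathcal{M}_n(E_N)
\]
for some integer $n \geq 1$.

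It remains to check that $n = r$. For this I would argue that $\mathcal{R}$ is a free $\mathcal{C}$-module of rank $r^2$: the family $\{a_j X^i \mid 0 \leq i < r,\ 0 \leq j < r\}$, where $(a_j)$ is any $k^\sigma$-basis of $k$, is a $\mathcal{C}$-basis of $\mathcal{R}$ (every element of $k[X,\sigma][1/X]$ can be uniquely rewritten in this form using $X^r \in \mathcal{C}$ to reduce exponents modulo $r$). Tensoring with $E_N$ gives
\[
\dim_{E_N} \bigl(\mathcal{R}/N\mathcal{R}\bigr) \;=\; r^2.
\]
Comparing with $\dim_{E_N} \mathcal{M}_n(E_N) = n^2$ forces $n = r$, which concludes the proof.

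The substantive step is the verification that $\mathcal{R}$ is free of rank $r^2$ over $\mathcal{C}$; everything else is a direct application of Corollary \ref{morita} combined with the triviality of the Brauer group of a finite field. I do not foresee any serious obstacle, only the small bookkeeping of checking that the natural basis is indeed a $\mathcal{C}$-basis after inverting $X$.
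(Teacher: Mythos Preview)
Your proposal is correct and follows essentially the same route as the paper: invoke Corollary~\ref{morita} (whose proof already contains the Azumaya/Brauer-group argument you spell out) to get $\mathcal{R}/N\mathcal{R}\simeq\mathcal{M}_n(E_N)$, then fix $n=r$ by a dimension count. The only cosmetic difference is that the paper counts dimensions over $k^\sigma$ (obtaining $r^2\deg N$) while you count over $E_N$ (obtaining $r^2$); these are of course equivalent since $[E_N:k^\sigma]=\deg N$.
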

\begin{proof}
By corollary $\ref{morita}$, $ \mathcal{R}/N\mathcal{R}$ is a ring of matrices with coefficients in $E_N$. The result follows from the fact that $\mathcal{R}/N\mathcal{R}$ has dimension $r^2\deg N$ over $k^\sigma$.
\end{proof}
One of the usual objects associated to Azumaya algebras is the notion of 
\emph{reduced norm}. This notion will be very important in the rest of 
the paper. In our situation, it is a multiplicative morphism 
$\mathcal{N}~:~k[X,\sigma][1/X] \rightarrow k^\sigma[X^r][1/X^r]$ which 
can be defined as follows. Consider the largest {\'e}tale subalgebra of 
$k[X,\sigma][1/X]$, which is $k[X^r][1/X^r]$. Then, $\mathcal N(x)$ is 
nothing but the determinant of the right-multiplication by $P$ on 
$k[X,\sigma][1/X]$ considered as a $k[X^r][1/X^r]$-module. Using that 
$k[X,\sigma]$ is a free module of rank $r$ over $k[X^r]$ (with basis 
$(1, X, \ldots, X^{r-1})$ for example), we deduce directly that $\mathcal N$ 
maps $k[X,\sigma]$ to $k^\sigma[X^r]$. We furthermore note that, if $P$ 
is the central skew polynomial (\emph{i.e.} $P \in k^\sigma[X^r]$), the 
(right-)multiplication by $P$ acts on a $k[X,\sigma]$ by 
(left-)multiplication by $P$ and therefore has determinant $P^r$. 
Therefore $\n(P) = P^r$ provided that $P \in k^\sigma[X^r]$.

\begin{rmq}
The property of being an Azymaya algebra could certainly be generalized to some other skew polynomal rings, for instance $k[X,\partial]$ where $\partial f = f' + f \partial$. For Azumaya algebras over rings whose Brauer group is trivial, many results of this paper should remain true. Since the triviality of the Brauer group is used strongly, there would probably be variations in the expected theorems when the Brauer group is nontrivial.
\end{rmq}
\subsection{Reinterpretation in terms of Galois representations}
In this section, we give a reinterpretation of the Morita equivalence in terms of Galois representations, recovering a variation of a theorem of Katz. 
Let us first give one definition.
\begin{defi}
A $\varphi$\emph{-module} over $k$ is a finite dimensional $k$-vector space $D$ endowed with an endomorphism $\varphi$ : $D \rightarrow D$ that is semilinear with respect to $\sigma$, \emph{i.e.} for all $x \in D$ and $a \in k$, $\varphi(\lambda x) = \sigma(\lambda) \varphi(x)$. A $\varphi$-module is said to be \emph{{\'e}tale} if the map $\varphi$ is injective.
\end{defi}
By definition, a $\varphi$-module (resp. an {\'e}tale $\varphi$-module) 
over $k$ is exactly a left-$k[X,\sigma]$-module having finite dimension 
over $k$.
\begin{defi}
If $P \in k[X,\sigma]$, the $\varphi$-module $D_P$ associated to $P$ is $k[X,\sigma]/k[X,\sigma]P$, endowed with the semilinear map $\varphi$ given by left-multiplication by $X$. We say that $P$ is {\'e}tale if $D_P$ is {\'e}tale. (It exactly means that the constant coefficient of $P$ is nonzero.)
\end{defi}
\begin{rmq}
Two skew polynomials $P$ and $Q$ are similar if and only if $D_P \simeq D_Q$.
\end{rmq}

The Morita equivalence shows the following:
\begin{cor}\label{moritakatz}
The category of {\'e}tale $\varphi$-modules over $k$ is equivalent to the category of finite dimensional $k^\sigma$-vector spaces endowed with an invertible endomorphism.
\end{cor}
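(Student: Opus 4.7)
My plan is to identify both sides of the claimed equivalence with module categories over a pair of rings that are Morita equivalent, then invoke Corollary \ref{morita}. Set $\mathcal{R} = k[X,\sigma][1/X]$ and $\mathcal{C} = k^\sigma[X^r][1/X^r]$. On the left, I would first observe that on any finite-dimensional $k$-vector space the semilinear map $\varphi$ is injective if and only if it is bijective; hence an étale $\varphi$-module is exactly a left $\mathcal{R}$-module of finite length, with $\varphi$ realised as the action of $X$. On the right, a finite-dimensional $k^\sigma$-vector space equipped with an invertible endomorphism is tautologically a finite-length $\mathcal{C}$-module, $X^r$ acting via the given automorphism. It therefore suffices to produce a natural equivalence between finite-length $\mathcal{R}$-modules and finite-length $\mathcal{C}$-modules.

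To build this equivalence, start from a finite-length left $\mathcal{R}$-module $M$. Via the inclusion $\mathcal{C} \hookrightarrow \mathcal{R}$, $M$ is finitely generated and torsion over the PID $\mathcal{C}$, hence annihilated by some nonzero $N \in k^\sigma[X^r]$ which, after dividing by its largest power of $X^r$, we may assume is not a power of $X$. Corollary \ref{morita} then furnishes a Morita equivalence between the module categories of $\mathcal{R}/N\mathcal{R}$ and $\mathcal{C}/N\mathcal{C}$. Passing to the filtered union of these equivalences over $N$, ordered by divisibility, yields the desired equivalence of finite-length module categories, which composed with the two tautological identifications above proves the corollary.

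The main obstacle is to ensure that the equivalences of Corollary \ref{morita} are functorial in $N$, so that the colimit honestly assembles them into a single equivalence of categories. This reduces to producing a single Morita $(\mathcal{R}, \mathcal{C})$-bimodule that specialises at every admissible $N$ to the bimodule used locally; concretely, one would write $\mathcal{R} \simeq \mathrm{End}_\mathcal{C}(P)$ for some rank-$r$ projective $\mathcal{C}$-module $P$, after which the equivalence takes the familiar form $V \mapsto P \otimes_\mathcal{C} V$ with quasi-inverse $M \mapsto \mathrm{Hom}_\mathcal{R}(P, M)$. Such a $P$ exists because the Brauer group of $\mathcal{C}$ is trivial — this is the global counterpart of the local vanishing argument already used in the proof of Corollary \ref{morita}, and is available here because $\mathcal{C}$ is a localisation of a polynomial ring over a finite field.
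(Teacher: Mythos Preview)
Your first two paragraphs match the paper's approach --- reduce to Corollary~\ref{morita} by choosing a central annihilator $N$ --- and you are right to flag, in the third, that the paper's one-line proof does not explain how the equivalences for different $N$ assemble into one functor. But your proposed resolution contains a real error: the Brauer group of $\mathcal{C} = k^\sigma[X^r][1/X^r]$ is \emph{not} trivial, and for $r>1$ the class of $\mathcal{R}$ in it is nonzero. Indeed, $\mathcal{R}\otimes_{\mathcal{C}}\text{Frac}(\mathcal{C})$ is the cyclic algebra $\bigl(k(X^r)/k^\sigma(X^r),\sigma,X^r\bigr)$, and $X^r$ is not a norm from $k(X^r)$: every norm has degree (as a rational function of $X^r$) divisible by $r$, while $X^r$ itself has degree~$1$. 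Equivalently, by class field theory $\text{Br}(\mathcal{C})\simeq\mathbb{Q}/\mathbb{Z}$ via the local invariant at the excluded point $X^r=0$, and our algebra has nonzero invariant there. So there is no projective $\mathcal{C}$-module $P$ with $\mathcal{R}\simeq\End_{\mathcal{C}}(P)$, and your last paragraph fails.

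The compatibility issue is nonetheless easily repaired without a global splitting. Both module categories decompose as orthogonal direct sums over the closed points of $\text{Spec}\,\mathcal{C}$ (primary decomposition, which respects the $\mathcal{R}$-action since $\mathcal{C}$ is central). For a fixed irreducible $N$, the completion $\widehat{\mathcal{C}}_N$ is a complete local ring with finite residue field, hence has trivial Brauer group; thus $\mathcal{R}\otimes_{\mathcal{C}}\widehat{\mathcal{C}}_N\simeq M_r(\widehat{\mathcal{C}}_N)$, and a single choice of such an isomorphism furnishes compatible Morita equivalences for every power of $N$ at once. Summing these over all $N$ gives the equivalence. Alternatively, one can bypass Morita entirely and use the explicit functor of Theorem~\ref{katz}, which the paper turns to immediately after this corollary.
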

\begin{proof}
Let $D$ be an {\'e}tale $\varphi$-module over $k$. Since $D$ has finite dimension over $k$, it is annihilated by some ideal $(N)$ of $\mathcal{C}$. By \ref{morita}, the categories of left-$\mathcal{R}/N\mathcal{R}$-modules and $\mathcal{C}/N\mathcal{C}$-modules are equivalent and we are done.
\end{proof}
This corollary can also be seen as a variation of the following theorem:
\begin{theo}[Katz]\label{katz}
Let $K$ be a field of characteristic $p>0$ endowed with a power of the Frobenius endomorphism $\sigma$. Then the category of {\'e}tale $\varphi$-modules over $K$ is equivalent to the category of $K^\sigma$-representations of the absolute Galois group of $K$.
\end{theo}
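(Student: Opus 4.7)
The plan is to construct explicit quasi-inverse functors and to reduce the whole equivalence to one Lang--Steinberg-type trivialization statement over a separable closure. First I would extend $\sigma$ canonically to an endomorphism $\tilde\sigma$ of $K^{\text{sep}}$ (automatic since $\sigma$ is a power of Frobenius), and note that $\tilde\sigma$ commutes with the natural action of $G = \gal(K^{\text{sep}}/K)$. To an {\'e}tale $\varphi$-module $D$ over $K$ I would attach
$$\mathbf{V}(D) = \bigl(D \otimes_K K^{\text{sep}}\bigr)^{\varphi \otimes \tilde\sigma \,=\, 1},$$
a finite-dimensional $K^\sigma$-vector space endowed with the continuous $G$-action coming from the second factor. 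Conversely, to a continuous finite-dimensional $K^\sigma$-representation $W$ of $G$ I would attach
$$\mathbf{D}(W) = \bigl(W \otimes_{K^\sigma} K^{\text{sep}}\bigr)^{G},$$
viewed as a $K$-vector space equipped with the $\sigma$-semilinear endomorphism $1 \otimes \tilde\sigma$, which is bijective because $\tilde\sigma$ is.

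The core of the argument is the following trivialization lemma: for every {\'e}tale $\varphi$-module $D'$ over $K^{\text{sep}}$, the natural map $(D')^{\varphi=1} \otimes_{K^\sigma} K^{\text{sep}} \to D'$ is an isomorphism of $\varphi$-modules. Concretely, if $\varphi$ acts in some basis by an invertible matrix $A$, proving this amounts to producing $B \in GL_n(K^{\text{sep}})$ with $\tilde\sigma(B) = A^{-1} B$. This is exactly Lang's theorem applied to the algebraic group $GL_n$ and the Steinberg endomorphism $\tilde\sigma$, which asserts surjectivity of $B \mapsto B \tilde\sigma(B)^{-1}$ on $K^{\text{sep}}$-points. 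An elementary alternative is an induction on $n$: finding a single nonzero $\varphi$-fixed vector amounts to a system of Artin--Schreier-type equations $\tilde\sigma(v_j) = \sum_i (A^{-1})_{ji}\, v_i$, which is separable thanks to the invertibility of $A$ and therefore has a solution over the separably closed field $K^{\text{sep}}$; quotienting by the resulting trivial sub-$\varphi$-module and iterating then produces a $\varphi$-fixed basis. This trivialization lemma is the main obstacle and the only nontrivial input of the proof.

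Once it is available, the remainder is formal. Applied to $D' = D \otimes_K K^{\text{sep}}$, the lemma provides a canonical $G$-equivariant isomorphism $\mathbf{V}(D) \otimes_{K^\sigma} K^{\text{sep}} \simeq D \otimes_K K^{\text{sep}}$, so Galois descent for $K^{\text{sep}}/K$ identifies $\mathbf{D}(\mathbf{V}(D))$ with $D$ as $\varphi$-modules. Conversely, Galois descent gives $\mathbf{D}(W) \otimes_K K^{\text{sep}} \simeq W \otimes_{K^\sigma} K^{\text{sep}}$, and taking the $\tilde\sigma$-fixed part on the right recovers $W$ because $(K^{\text{sep}})^{\tilde\sigma} = K^\sigma$ under the standing hypothesis. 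Functoriality and compatibility with morphisms are immediate from the constructions, so $\mathbf{V}$ and $\mathbf{D}$ are quasi-inverse equivalences, completing the proof.
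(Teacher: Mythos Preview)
The paper does not actually prove this theorem: it is quoted as a classical result of Katz, and the only additional information the paper supplies is the remark that the functor realizing the equivalence sends an \'etale $\varphi$-module $D$ to $\Hom_\varphi(D,K^{\text{sep}})$. So there is no ``paper's proof'' to compare against; your write-up is a self-contained proof of a result the paper merely cites.

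That said, your argument is the standard one and is correct. Two remarks are worth making. First, your functor $\mathbf{V}(D) = (D\otimes_K K^{\text{sep}})^{\varphi\otimes\tilde\sigma=1}$ is the covariant version of the equivalence, whereas the paper records the contravariant one $D \mapsto \Hom_\varphi(D,K^{\text{sep}})$; the two are naturally dual and either yields the equivalence. Second, your last step uses $(K^{\text{sep}})^{\tilde\sigma}=K^\sigma$, which you correctly flag as ``the standing hypothesis''. This identity amounts to $\F_{p^s}\subset K$ when $\sigma$ is the $p^s$-power Frobenius; the paper is aware of this and, in its application immediately following the theorem, replaces $k$ by $k\F_{p^s}$ precisely to ensure it. Your appeal to Lang's theorem (or the inductive Artin--Schreier argument) for the trivialization over $K^{\text{sep}}$ is the right ingredient, and the Lang map $B\mapsto B\,\tilde\sigma(B)^{-1}$ is indeed \'etale and surjective, so the argument goes through over a separably closed field even when $K$ is not perfect.
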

Indeed, if $\sigma(a) = a^{p^s}$, let $K = k\F_{p^s}$. Then $K^\sigma = k^\sigma$, and the absolute Galois group of $K$ is a procyclic group, so that a representation of this group is just the data of an invertible endomorphism of a $k^\sigma$-vector space of finite dimension (giving the action of a generator of the group).
The functor giving this equivalence is explicit: the representation corresponding to an {\'e}tale $\varphi$-module $D$ over $k$ is $\Hom_\varphi(D,K^\text{sep})$.
\begin{prop}\label{calculrep}
Let $(D,\varphi)$ be a $\varphi$-module over $k$, and let $\sigma^r$ be the generator of the absolute Galois group of $k \F_{p^s}$. Then the action of $\sigma^r$ on the $k^\sigma$-representation  $V$ corresponding to $D$ is isomorphic to $\varphi^r$:
$$(V \otimes_{k^\sigma} k, \sigma \otimes 1) \simeq (D, \varphi^r).$$
\end{prop}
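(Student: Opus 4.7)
The plan is to make Katz's equivalence (Theorem~\ref{katz}) explicit enough to trace the action of $\varphi^r$ through it, then conclude by the elementary fact that an endomorphism is always similar to its transpose. Write $G = \mathrm{Gal}(K^{\mathrm{sep}}/K)$ with $K = k\F_{p^s}$, and let $\rho : G \to \mathrm{GL}_{k^\sigma}(V)$ denote the representation structure on $V$.

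I would first invoke the standard inverse of the Katz functor: the evaluation pairing $V \otimes_{k^\sigma} D \to K^{\mathrm{sep}}$, combined with Galois descent, yields a canonical isomorphism of $\varphi$-modules over $K$
\[
D \otimes_k K \;\simeq\; (V^\vee \otimes_{k^\sigma} K^{\mathrm{sep}})^G,
\]
where $V^\vee$ carries the dual representation, $G$ acts diagonally on the tensor product, and the $\varphi$-structure on the right-hand side is induced by $1 \otimes \sigma$. Setting this up and verifying the compatibility of all the structures in sight (Galois, $\varphi$, and the base-field structures) is the main technical work.

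I would then compute $\varphi^r$ on this model. For a $G$-fixed element $d \in V^\vee \otimes_{k^\sigma} K^{\mathrm{sep}}$, the invariance relation $(\rho^\vee(\sigma^r) \otimes \sigma^r)(d) = d$ under the topological generator $\sigma^r$ of $G$ rearranges to
\[
\varphi^r(d) \;=\; (1 \otimes \sigma^r)(d) \;=\; \bigl(\rho(\sigma^r)^T \otimes 1\bigr)(d),
\]
where I use $\rho^\vee(\sigma^r)^{-1} = \rho(\sigma^r)^T$, valid because $G$ is abelian and therefore the dual representation is the inverse-transpose. After Galois descent, this identifies $(D \otimes_k K, \varphi^r)$ with $(V^\vee \otimes_{k^\sigma} K,\, \rho(\sigma^r)^T \otimes 1)$ as $K$-vector spaces with $K$-linear endomorphism.

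To conclude, I would invoke the standard fact that any square matrix is similar to its transpose, which yields $(V^\vee, \rho(\sigma^r)^T) \simeq (V, \rho(\sigma^r))$ as $k^\sigma$-vector spaces with endomorphism. Descending from $K$ back to $k$ by faithfully flat descent for $k[T]$-modules then gives the desired isomorphism $(V \otimes_{k^\sigma} k,\, \sigma^r \otimes 1) \simeq (D, \varphi^r)$.

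The principal obstacle is the first step, where one must pin down the inverse of the Katz functor explicitly and verify its compatibility with all the ambient structures; the remaining two steps are a short rearrangement followed by a standard linear-algebra observation.
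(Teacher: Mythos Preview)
Your argument is correct but takes a considerably heavier route than the paper's. The paper never constructs the inverse functor: it works directly with the definition $V = \Hom_\varphi(D, K^{\mathrm{sep}})$ and reads off the identity $(\sigma^r \cdot f)(x) = f(\varphi^r(x))$, which immediately shows that $\sigma^r$ acting on $V$ and $\varphi^r$ acting on $D$ have the same annihilating polynomials over $k^\sigma$. After a one-line reduction to indecomposable $D$ (where the minimal polynomial equals the characteristic polynomial on both sides, and $\dim_{k^\sigma} V = \dim_k D$), this already forces the two endomorphisms to be conjugate. The entire proof is four sentences.

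Both arguments rest on the same underlying observation---the evaluation pairing makes $\sigma^r$ and $\varphi^r$ ``transposes'' of one another---but you package it through the explicit inverse functor, a Galois-descent untwisting, the transpose-similarity lemma, and a further faithfully-flat descent from $K$ to $k$, whereas the paper extracts only the minimal polynomial and finishes via rational canonical form in the cyclic case. What your route buys is that the transpose-similarity step handles all Jordan types at once, so you need no reduction to indecomposables (a reduction which, in the paper's proof, tacitly uses that the Katz functor respects direct sums). What the paper's route buys is brevity: no inverse functor to pin down, no compatibilities to verify, and no descent. One small aside: the identity $\rho^\vee(\sigma^r)^{-1} = \rho(\sigma^r)^T$ is simply the definition of the contragredient and holds for any group; where abelianness of $G$ genuinely enters your argument is in checking that $\rho(\sigma^r)^T \otimes 1$ is $G$-equivariant and hence restricts to the invariants.
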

\begin{proof}
It is enough to prove the result when $\varphi^r$ is cyclic. Let $f \in V = \Hom_\varphi(D,K^\text{sep})$. Then for $x \in D$, $\sigma^rf(x) = f(\varphi^r(x))$. This shows that the polynomials annihilating $\sigma^r$ and $\varphi^r$ are the same. The characteristic and minimal polynomials of $\sigma^r$ are the same, and equal to the characteristic polynomial of $\varphi^r$, so these two endomorphisms are conjugate.
\end{proof}
Using the fact that two skew polynomials are similar if and only if the corresponding $\varphi$-modules are isomorphic, we immediately get:
\begin{cor}\label{similarity}
Let $P,Q \in k[X,\sigma]$. The skew polynomials $P$ and $Q$ are similar if and only if the $k^\sigma[X^r]$-modules $(D_P, \varphi^r)$ and $(D_Q, \varphi^r)$ are isomorphic.
\end{cor}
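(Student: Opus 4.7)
My plan is to combine the remark in Section~1.1 — that $P$ and $Q$ are similar iff $D_P \simeq D_Q$ as $k[X,\sigma]$-modules — with Proposition~\ref{calculrep}, which identifies $(D,\varphi^r)$ with a scalar extension of the associated Galois representation. The forward implication is immediate: any $k[X,\sigma]$-linear isomorphism $D_P \to D_Q$ is in particular $k^\sigma$-linear and commutes with multiplication by $X^r$, hence with $\varphi^r$, yielding the desired $k^\sigma[X^r]$-module isomorphism.

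For the converse, I would first treat the étale case (nonzero constant terms). Corollary~\ref{moritakatz} associates to $D_P$, $D_Q$ their corresponding $k^\sigma$-vector spaces $V_P$, $V_Q$ with invertible endomorphism, viewed as $k^\sigma[X^r]$-modules. Proposition~\ref{calculrep} gives $(D_P, \varphi^r) \simeq V_P \otimes_{k^\sigma} k$ as $k[X^r]$-modules; restricting scalars and using $k \simeq (k^\sigma)^r$, this becomes $(D_P, \varphi^r) \simeq V_P^{\oplus r}$ as $k^\sigma[X^r]$-modules, and similarly for $Q$. A hypothesized isomorphism $(D_P, \varphi^r) \simeq (D_Q, \varphi^r)$ thus translates into $V_P^{\oplus r} \simeq V_Q^{\oplus r}$. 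Since $k^\sigma[X^r]$ is a principal ideal domain, finitely generated modules over it satisfy cancellation, so $V_P \simeq V_Q$, and Morita returns $D_P \simeq D_Q$ as $k[X,\sigma]$-modules.

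For the general case, I would reduce to the étale one. Writing $P = X^a P_0$ with $P_0$ étale, one checks (using the identity $X^a P_0 = \sigma^a(P_0) X^a$) that $\text{rgcd}(X^a, P_0) = 1$ and $\text{llcm}(X^a, P_0) = X^a P_0$; a skew Chinese remainder argument then yields $D_P \simeq D_{X^a} \oplus D_{P_0}$ as $k[X,\sigma]$-modules. On the $k^\sigma[X^r]$-side, this coincides with the primary decomposition of $(D_P, \varphi^r)$ isolating the $X^r$-primary summand from the rest. Consequently, any $k^\sigma[X^r]$-isomorphism between $(D_P, \varphi^r)$ and $(D_Q, \varphi^r)$ respects this splitting and induces separate isomorphisms on the étale and nilpotent parts: the étale parts are handled as above, while the nilpotent pieces $D_{X^a}$ and $D_{X^{a'}}$ must have the same $k^\sigma$-dimension, forcing $a = a'$.

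The main obstacle is reconciling the CRT decomposition on the $k[X,\sigma]$-side with the primary decomposition on the $k^\sigma[X^r]$-side, so that the hypothesized isomorphism can be decoupled into an étale and a nilpotent part. Once this is settled, the rest is routine: cancellation over the PID $k^\sigma[X^r]$ for the étale piece, and the trivial dimension count for the $X$-power piece.
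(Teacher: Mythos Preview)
Your argument is correct and follows the same line as the paper, which derives the corollary in one sentence from the Morita/Katz equivalence (Corollary~\ref{moritakatz}) together with Proposition~\ref{calculrep}. You are in fact more careful than the paper: the paper's ``immediately'' glosses over precisely the two points you isolate, namely the cancellation $V_P^{\oplus r}\simeq V_Q^{\oplus r}\Rightarrow V_P\simeq V_Q$ over the PID $k^\sigma[X^r]$, and the reduction of the non-\'etale case (which the surrounding discussion, relying on Katz's theorem, does not literally cover) to the \'etale one via the splitting $D_P\simeq D_{X^a}\oplus D_{P_0}$.
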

Since $\varphi^r$ is a $k$-linear map, testing if these $k^\sigma[X^r]$ are isomorphic is completely straightforward.
\subsection{Factorizations}
In this section, we study some properties related to factorizations of skew polynomials and the structure of the corresponding $\varphi$-modules. First recall that if $P$ is a monic {\'e}tale skew polynomial, there is a bijection between all factorizations of $P$ as a product of monic irreducible skew polynomials, one the one hand, and all Jordan-H{\"o}lder sequence of the corresponding $\varphi$-module, on the other hand. By theorem \ref{calculrep}, these factorizations are also in bijection with Jordan-H{\"o}lder sequence of $D_P$ (viewed as a $k^\sigma[X^r]$-module). We shall see how we can use this to count the number of factorizations of $P$.

\subsubsection{Another definition of the norm}
Recall that we have defined the (reduced) norm of a skew polynomial $P \in k[X,\sigma]$ as the determinant of the right-multiplication by $P$ acting on the $k[X^r]$-module $k[X,\sigma]$. Proposition \ref{calculrep} allows us to give an equivalent definition:
\begin{lem}\label{norm-otherdef}
Let $P \in k[X,\sigma]$ be monic and let $(D_P,\varphi)$ be the corresponding $\varphi$-module. Then the norm $\n(P)$ is the characteristic polynomial of $\varphi^r$. If $P = a\tilde P$ with $\tilde P$ monic, then $\n(P) = N_{k/k^\sigma}(a) \cdot \n(\tilde P)$.
\end{lem}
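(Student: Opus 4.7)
The plan is to use the short exact sequence
$$0 \;\longrightarrow\; k[X,\sigma] \;\xrightarrow{\;\cdot P\;}\; k[X,\sigma] \;\longrightarrow\; D_P \;\longrightarrow\; 0.$$
Because $X^r$ is central in $k[X,\sigma]$, right-multiplication by $P$ is $k[X^r]$-linear, so this sequence is a free resolution of $D_P$ by $k[X^r]$-modules, both copies of $k[X,\sigma]$ being free of rank $r$ with basis $(1,X,\dots,X^{r-1})$. By the very definition of the reduced norm, the determinant of the first map is $\n(P)$; on the other hand, general principles on modules over the PID $k[X^r]$ show that the same determinant generates the order ideal of the cokernel $D_P$.

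The second step is to identify this order ideal in linear-algebraic terms. The $k[X^r]$-module structure on $D_P$ is simply the $k$-linear operator $\varphi^r$ (which is $k$-linear because $\sigma^r=\mathrm{id}$), and $D_P$ has $k$-dimension $d=\deg P$. Setting $T=X^r$, the structure theorem for finite torsion modules over $k[T]$ identifies the order ideal with the ideal generated by the characteristic polynomial $\chi_{\varphi^r}(T)$. Both $\n(P)$ and $\chi_{\varphi^r}(X^r)$ are therefore polynomials of degree $d$ in $X^r$ which differ by a unit, and comparing leading coefficients, using that $P$ is monic and $\chi_{\varphi^r}$ is monic by convention, forces them to coincide exactly.

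Before concluding, I still need to check that $\chi_{\varphi^r}$ actually lies in $k^\sigma[T]$, and not merely in $k[T]$, to match the fact that $\n(P)\in k^\sigma[X^r]$. Fix a $k$-basis of $D_P$ and let $F$ be the matrix of $\varphi$. The semilinearity $\varphi(av)=\sigma(a)\varphi(v)$ gives that the matrix of $\varphi^r$ equals $F\cdot\sigma(F)\cdot\sigma^2(F)\cdots\sigma^{r-1}(F)$. Applying $\sigma$ to its characteristic polynomial rotates the factors cyclically, and the identity $\det(AB)=\det(BA)$ shows that $\chi_{\varphi^r}$ is $\sigma$-invariant, hence lies in $k^\sigma[T]$.

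Finally, for $P = a\tilde P$ with $a\in k^\times$ and $\tilde P$ monic, multiplicativity of $\n$ (inherited from the determinant) reduces the claim to computing $\n(a)$. In the basis $(1,X,\dots,X^{r-1})$, right-multiplication by $a$ sends $X^i$ to $\sigma^i(a)X^i$, which is a diagonal matrix with determinant $a\sigma(a)\cdots\sigma^{r-1}(a)=N_{k/k^\sigma}(a)$, exactly as desired. The most delicate point of the argument is the bookkeeping in the first step, namely verifying that the ``unit'' by which $\det(\cdot P)$ and $\chi_{\varphi^r}(X^r)$ might a priori differ is truly $1$; the cleanest way is the leading-coefficient comparison sketched above, which relies only on the monic normalization of $P$.
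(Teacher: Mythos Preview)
Your argument is correct and follows a genuinely different route from the paper's. You recognize that the short exact sequence
\[
0 \longrightarrow k[X,\sigma] \xrightarrow{\;\cdot P\;} k[X,\sigma] \longrightarrow D_P \longrightarrow 0
\]
is a free resolution of $D_P$ as a $k[X^r]$-module, so that $\n(P)=\det(m_P)$ generates the $0$-th Fitting ideal of $D_P$; comparing with the standard resolution $0 \to k[T]^d \xrightarrow{T\cdot I - A} k[T]^d \to D_P \to 0$ coming from the $k$-linear operator $\varphi^r$ immediately yields $(\n(P)) = (\chi_{\varphi^r}(X^r))$ as ideals, with no reduction to irreducible $P$. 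The paper instead first reduces to $P$ irreducible by multiplicativity of both sides, then argues that $\chi_{\varphi^r}$ (which for irreducible $P$ coincides with the minimal polynomial of $\varphi^r$) divides $\n(P)$ because $\n(P)$ kills $D_P$, and finally writes out the matrix of $m_P$ explicitly in the basis $(1,X,\ldots,X^{r-1})$ to identify the unique highest-degree term in the determinant expansion.

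Your Fitting-ideal shortcut is more conceptual and sidesteps the irreducibility reduction entirely. The one place where you are vaguer than the paper is exactly the step you flag as ``most delicate'': you assert that ``the monic normalization of $P$'' forces the unit to be $1$, but nothing in your setup makes this automatic---it is precisely the content of the paper's explicit matrix computation, which locates the top-degree monomial of $\det(m_P)$. (A small side remark: for the $\sigma$-invariance of $\chi_{\varphi^r}$, what you actually need is that $AB$ and $BA$ have the same \emph{characteristic polynomial}, not merely the same determinant; this is of course still a standard fact.)
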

\begin{proof}
Let $m_P$ be the right-multiplication by $P$ acting on $k[X,\sigma]$. Since both $P \mapsto \n(P) = \det m_P$ and $P \mapsto \chi_{\varphi^r}$ are multiplicative, it is enough to prove the Lemma when $P$ is monic irreducible. Let $\pi : k[X,\sigma] \to D_P$ be the canonical projection. We have $\pi \circ m_P = 0$. Since $\pi$ is surjective, the multiplication by $\det m_P$ is also  zero in $D_P$. This means that the minimal polynomial of the multiplication by $X^r$ on $D_P$ is a divisor of $\det m_P$. Since $P$ is irreducible, this minimal polynomial is the characteristic polynomial $\chi$ of $\varphi^r$. It is then enough to show (1)~that the degree of $\n(P)$ is the same as the degree of $\chi$ and (2)~that $\n(P)$ is monic. Write $P = P_0 + X P_1 + \cdots + X^{r-1}P_{r-1}$ with the $P_i$'s in $k[X^r]$. In the basis $(1, X, \ldots, X^{r-1})$, the matrix of $m_P$ is:
$$\begin{pmatrix}
P_0 & X^r\sigma(P_{r-1}) & \ldots & \ldots & X^r \sigma^{r-1}(P_1)\\
P_1 & \sigma(P_0) & \ddots & \ddots & \vdots \\
\vdots & \ddots & \ddots & \ddots & \vdots \\
\vdots & \ddots & \ddots & \ddots & X^r\sigma^{r-1}(P_{r-1})\\
P_{r-1} & \cdots & \cdots & \cdots & \sigma^{r-1}(P_0)
\end{pmatrix}.$$
Let $0 \leq i \leq r-1$ be the greatest integer such that the degree of $P_i$ is maximal, and denote by $\delta$ this degree. In the sum giving the determinant of this matrix, we have the term
$$P_i \sigma (P_i) \cdots \sigma^{r-i-1}(P_i) X^r\sigma^{r-i}(P_i) \cdots \sigma^{r-1}(P_i),$$
whose degree is $\delta(r-i) + (\delta +1)i = \delta r + i$ (as a polynomial in $X^r$). All the other terms of the determinant have degree less than this, so $\n(P) = \det m_P$ has degree $\delta r + i = \deg P = \deg \chi$ and is monic.
\end{proof}
\begin{prop}
Let $\n$ be the reduced norm map on $k[X,\sigma]$. Then the following properties hold:
\begin{itemize}
\item $\forall P \in k[X,\sigma]$, $P$ is a right- and left-divisor of $\n(P)$ in $k[X,\sigma]$,
\item $\forall P \in k[X,\sigma]$, $P$ is irreducible if and only if $\n(P)$ is irreducible in $k^\sigma[X^r]$,
\item If $P,Q \in k[X,\sigma]$ and $P$ is irreducible, then $P$ and $Q$ are similar if and only if $\n(P) = \n(Q)$ (up to multiplicative constant).
\end{itemize}
\end{prop}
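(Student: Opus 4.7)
The approach rests on Lemma~\ref{norm-otherdef}, which identifies $\n(P)$ with the characteristic polynomial of $\varphi^r$ on $D_P$, and the matrix description $\mathcal{R}/N\mathcal{R} \simeq M_r(E_N)$ from Corollary~\ref{matrixisomorphism} (valid for irreducible $N \neq X$). The case $X \mid P$ is treated separately by factoring out powers of $X$, so I focus on the generic case where $P$ has nonzero constant term.

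For the first assertion, I apply Cayley--Hamilton to the $k$-linear endomorphism $\varphi^r$ on $D_P$. Since $\n(P)$ is its characteristic polynomial, evaluating the relation $\n(P)(\varphi^r) = 0$ on the class of $1 \in D_P$ yields $\n(P) \in k[X,\sigma]P$, so $P$ is a right-divisor of $\n(P)$. To obtain left-divisibility, write $\n(P) = AP$; centrality of $\n(P)$ gives $P\n(P) = \n(P)P$, i.e.\ $(PA)P = (AP)P$, and right-cancellation in the domain $k[X,\sigma]$ yields $PA = AP$, whence $\n(P) = PA$.

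For the second assertion, one direction is direct: a nontrivial factorization $P = P_1 P_2$ gives $\n(P) = \n(P_1)\n(P_2)$ with both factors non-units, since $P_i \mid \n(P_i)$ forces $\deg \n(P_i) \geq \deg P_i \geq 1$. The converse is more subtle: if $P$ is irreducible then $D_P$ is simple, and by Schur its annihilator in the centre $\mathcal{C}$ is a prime ideal $(N)$ for some monic irreducible $N \in k^\sigma[X^r]$ dividing $\n(P)$. Then $D_P$ becomes a simple module over $\mathcal{R}/N\mathcal{R} \simeq M_r(E_N)$, whose unique simple module (up to isomorphism) has $k$-dimension $\deg_{X^r} N$. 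Comparing with $\dim_k D_P = \deg P = \deg_{X^r}\n(P)$ forces $\n(P)$ and $N$ to have the same $X^r$-degree, hence $\n(P) = N$ up to a constant in $(k^\sigma)^\times$.

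For the third assertion: if $P,Q$ are similar then $D_P \simeq D_Q$, so $\varphi^r$ has the same characteristic polynomial on both sides, yielding $\n(P) = \n(Q)$ up to the leading-coefficient correction of Lemma~\ref{norm-otherdef}. Conversely, assume $P$ irreducible with $\n(Q) = c\,\n(P)$; then $\n(Q)$ is irreducible up to a unit, so $Q$ is irreducible by~(2), and $D_P,D_Q$ are both simple modules over $\mathcal{R}/N\mathcal{R} \simeq M_r(E_N)$ for the common irreducible factor $N$. Uniqueness of simple modules over a matrix algebra gives $D_P \simeq D_Q$, so $P$ and $Q$ are similar. The main obstacle throughout is the dimension count in the converse of~(2): it is the equality $\deg_{X^r}\n(P) = \deg P$ supplied by Lemma~\ref{norm-otherdef} that rules out the a priori possibility $\n(P) = N^e$ with $e>1$.
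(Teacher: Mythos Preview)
Your argument is correct and close in spirit to the paper's. Part~(1) is identical: both you and the paper deduce right-divisibility from Cayley--Hamilton applied to $\varphi^r$ (the paper phrases this as ``$\n(P)(\varphi)=0$'' and identifies the annihilator ideal with $k[X,\sigma]P$); your explicit derivation of left-divisibility from centrality is a nice addition, as the paper simply cites Jacobson for this.

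For~(2) and~(3) the routes diverge slightly. The paper passes through the Galois-representation side of the Morita equivalence (Proposition~\ref{calculrep} and Corollary~\ref{similarity}): $D_P$ simple $\Leftrightarrow$ the associated $k^\sigma$-representation is irreducible $\Leftrightarrow$ the characteristic polynomial of $\varphi^r$ is irreducible; and similarity is controlled by the conjugacy class of $\varphi^r$, which for irreducibles reduces to the characteristic polynomial. You instead stay on the algebra side and use Corollary~\ref{matrixisomorphism} directly: the annihilator of the simple module $D_P$ is a prime $(N)$, so $D_P$ is the unique simple $M_r(E_N)$-module, and a dimension count against $\deg_{X^r}\n(P)=\deg P$ forces $\n(P)=N$. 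The two arguments are really the same Morita equivalence read from opposite ends; yours is marginally more self-contained (it avoids invoking the representation-theoretic Corollary~\ref{similarity}), while the paper's is slightly shorter once those earlier results are in hand. Your remark about handling the non-\'etale case by stripping powers of $X$ is harmless but unnecessary: $\n(X)=\pm X^r$ is already irreducible in $k^\sigma[X^r]$, and the dimension argument goes through unchanged.
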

\begin{proof}
The first fact is well-known (see for instance \cite{jac}, Proposition 1.7.1). It can be seen easily from the fact that if $(D_P, \varphi)$ is the $\varphi$-module associated to $P$, then $\n(P) (\varphi) = 0$. Indeed, the left-ideal $\{ R \in k[X,\sigma]~|~R(\varphi) = 0 \}$ is exactly $k[X,\sigma]P$.\\
For the second assertion, remark that $P$ is irreducible if and only if $D_P$ is simple, which holds if and only if the corresponding representation is irreducible. This is true if and only if the characteristic polynomial of $\varphi^r$ is irreducible in $k^\sigma[X^r]$.\\
Finally, we have already seen that the similarity class of a skew polynomial is determined by the conjugacy class of the action of $\varphi^r$ on the corresponding $\varphi$-module (corollary \ref{similarity}). For irreducible elements, this is completely determined by the characteristic polynomial of $\varphi^r$, \emph{i.e.} the reduced norm.
\end{proof}
Since $P$ is a divisor of $\n(P)$, we can expect that if $\tilde{N}$ is some irreducible factor of $\n(P)$ in $k^\sigma[X^r]$, then rgcd$(\tilde N, P)$ would be a nonconstant right-divisor of $P$. This is actually always true and formalized by the following lemma:
\begin{lem}\label{factornorm}
Let $P \in k[X,\sigma]$ be {\'e}tale and monic. Let $N = \n(P)$. If $N = N_1\cdots N_m$ with all $N_i$'s irreducible. Then there exist $P_1,\ldots, P_m \in k[X,\sigma]$ such that $P = P_1 \cdots P_m$ and for all $1\leq i \leq m$, $\n(P_i) = N_i$.\\
Moreover, $P_m$ can be chosen as an irreducible right-divisor of rgcd$(P,N_m)$.
\end{lem}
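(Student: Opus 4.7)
The plan is to prove the lemma by induction on $m$. The base case $m=1$ is immediate: if $N=N_1$ is irreducible in $k^\sigma[X^r]$, then by the Proposition preceding the Lemma the skew polynomial $P$ is itself irreducible, and we may take $P_1=P$. For the inductive step, the whole matter reduces to exhibiting an irreducible right-divisor $P_m$ of $\text{rgcd}(P,N_m)$ satisfying $\n(P_m)=N_m$: indeed, writing $P=P'P_m$, multiplicativity of the reduced norm yields $\n(P')=N_1\cdots N_{m-1}$, and $P'$ is again étale (the constant coefficient of a product of skew polynomials is the product of the constant coefficients), so the induction hypothesis applies to $P'$.

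The key step is to show that $G:=\text{rgcd}(P,N_m)$ is not a unit. By definition of the right-gcd, $k[X,\sigma]\,G=k[X,\sigma]\,P+k[X,\sigma]\,N_m$, and I would rewrite the quotient of $k[X,\sigma]$ by this ideal in terms of the $\varphi$-module $D_P$:
$$
k[X,\sigma]\big/\bigl(k[X,\sigma]\,P+k[X,\sigma]\,N_m\bigr)\;\simeq\;D_P\big/N_m(\varphi^r)\,D_P,
$$
the identification using that $N_m\in k^\sigma[X^r]$ is central, so that the image of $k[X,\sigma]\,N_m$ in $D_P$ is exactly $N_m(\varphi^r)\,D_P$. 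Consequently, $G=1$ would force $N_m(\varphi^r)$ to be surjective on the finite-dimensional $k$-vector space $D_P$, hence bijective. But Lemma \ref{norm-otherdef} identifies the characteristic polynomial of $\varphi^r$ acting on $D_P$ with $N=N_1\cdots N_m$, which is divisible by $N_m$; since the minimal and characteristic polynomials share the same irreducible factors, the irreducible $N_m$ divides the minimal polynomial of $\varphi^r$, so $N_m(\varphi^r)$ cannot be injective. This contradiction shows $G\neq 1$.

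To conclude, I would pick any irreducible right-divisor $P_m$ of $G$; it is then automatically a right-divisor of $N_m$ as well. By the preceding Proposition, $\n(P_m)$ is irreducible in $k^\sigma[X^r]$. Since $N_m$ lies in the centre, $\n(N_m)=N_m^r$, so multiplicativity of $\n$ forces $\n(P_m)$ to divide $N_m^r$; the only monic irreducible divisor being $N_m$ itself, we obtain $\n(P_m)=N_m$, which closes the inductive step and simultaneously establishes the \emph{moreover} assertion. I expect the main subtlety to be the cokernel identification in the second paragraph: once one notices that the centrality of $N_m$ turns left-multiplication by $N_m$ on $D_P$ into the polynomial operator $N_m(\varphi^r)$, the remainder reduces to the elementary linear-algebra fact that any factor of the characteristic polynomial of an endomorphism produces a nonzero kernel.
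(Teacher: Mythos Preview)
Your proof is correct and takes a somewhat different route from the paper's. The paper argues via the equivalence of categories of Theorem~\ref{katz}: the Galois representation $V_P$ attached to $D_P$ has characteristic polynomial $N$ for the action of $\sigma^r$ (Proposition~\ref{calculrep}), hence contains a subrepresentation isomorphic to $k^\sigma[X^r]/N_m$; transporting back through the equivalence yields a quotient $D_P\twoheadrightarrow D_{P_m}$ with $\n(P_m)=N_m$, and this $P_m$ is then automatically a right-divisor of both $P$ and $N_m$, hence of their rgcd.

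You bypass the representation-theoretic detour entirely and work directly with the $k$-linear operator $\varphi^r$ on $D_P$: the cokernel identification $D_G\simeq D_P/N_m(\varphi^r)D_P$, together with the Cayley--Hamilton-type observation that $N_m$ divides the characteristic polynomial $\n(P)$ of $\varphi^r$ (Lemma~\ref{norm-otherdef}), gives $G\neq 1$ immediately. Then you extract $\n(P_m)=N_m$ from $\n(P_m)\mid \n(N_m)=N_m^r$ and irreducibility of $\n(P_m)$. This is more elementary---it uses only Lemma~\ref{norm-otherdef} and the preceding Proposition, and never invokes Katz's theorem or Morita equivalence. The paper's approach, on the other hand, makes the existence of the right-divisor with the \emph{prescribed} norm $N_m$ conceptually transparent (it comes from an explicit subrepresentation), and it foreshadows the structural viewpoint used throughout \S1.4.
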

\begin{proof}
By induction on $m$, it is enough to prove the last assertion. Let $V_P$ 
be the Galois representation corresponding to the $\varphi$-module $D_P$ 
\emph{via} Katz's equivalence of categories (\emph{cf} Theorem 
\ref{katz}). Using Proposition \ref{calculrep}, we find that $V_P$ has a 
subrepresentation which is isomorphic to the quotient $k^\sigma[X^r] / 
N_m$ (where $\sigma^r$ acts by multiplication by $X^r$). Hence, there 
exists a surjective map $D_P \to D_{P_m}$ where $P_m$ is some skew polynomial 
of reduced norm $N_m$. It implies that $P_m$ is a right divisor $P$ and
then also a right divisor of rgcd$(P,N_m)$. This concludes the proof.
\end{proof}
\begin{rmq}
This result shows how to determine the similarity classes of irreducible skew polynomials appearing in a factorization of $P$. It also shows that any order is possible for the appearance of these similarity classes in a factorization of $P$.
\end{rmq}
When $\tilde N$ is an irreducible factor of $\n(P)$, the right greatest 
common divisor rgcd$(\tilde N, P)$ is never constant, so if we want to 
factor $P$ as a product of irreducible polynomials, we only need to know 
how to factor skew polynomials which are right-divisors of irreducible 
elements of the centre $k^\sigma[X^r]$.

\subsubsection{On the structure of $D_P$ when $P$ divides an irreducible central polynomial}

Let $N \in k^\sigma[X^r]$ be a monic irreducible polynomial, and let $E = k^\sigma[X^r]/(N)$. Let $P \in k[X,\sigma]$ be a right-divisor of $N$. 
The previous section has shown that factoring skew polynomials can be reduced to factoring skew polynomials of this form.
In this theoretical section, we begin a close study of the structure of $D_P$. All the results we are going to prove will play a very important role in the next section when we will be interested in designed a fast algorithm for factorization of skew polynomials.

We first remark that, since $\n(N) = N^r$, the norm of $P$ is $N^e$ for some integer $e \in \{1, \ldots, r\}$.
\begin{lem}
The $\varphi$-module $D_N$ is isomorphic to a direct sum of $r$ copies of a simple $\varphi$-module.
\end{lem}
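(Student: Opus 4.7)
The plan is to invoke the Morita-theoretic description from Corollary \ref{matrixisomorphism} and then use the standard decomposition of a matrix algebra as a module over itself.

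First I would check that the assumption $N \neq X^r$ (i.e.\ ``$N \neq X$'' in the notation of Corollary \ref{matrixisomorphism}) is in force — otherwise $\varphi$ is nilpotent on $D_N$ and the statement fails. Under this assumption, since $N$ is irreducible in $k^\sigma[X^r]$, one has $N(0) \neq 0$, so $X^r$ becomes invertible in $k[X,\sigma]/(N)$ (it satisfies a polynomial with nonzero constant term modulo $N$), and therefore $X$ itself is invertible in that quotient. Consequently the natural map
$$D_N = k[X,\sigma]/k[X,\sigma]N \longrightarrow \mathcal{R}/N\mathcal{R}$$
is an isomorphism of left $k[X,\sigma]$-modules (one uses that $N$ is central, so $k[X,\sigma]N$ is two-sided).

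Next I would transport the left $k[X,\sigma]$-module structure on $D_N$ to a left $\mathcal{R}/N\mathcal{R}$-module structure. By Corollary \ref{matrixisomorphism} there is an isomorphism of algebras $\mathcal{R}/N\mathcal{R} \simeq \mathcal{M}_r(E_N)$, and under this isomorphism $D_N$ becomes $\mathcal{M}_r(E_N)$ viewed as a left module over itself.

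Now I invoke the standard fact (pure linear algebra over the field $E_N$) that the regular left module $\mathcal{M}_r(E_N)$ decomposes as
$$\mathcal{M}_r(E_N) \;\simeq\; (E_N^r)^{\oplus r},$$
the column module $E_N^r$ being (up to isomorphism) the unique simple left $\mathcal{M}_r(E_N)$-module. Pulling this decomposition back along $k[X,\sigma] \twoheadrightarrow \mathcal{R}/N\mathcal{R}$ yields an isomorphism $D_N \simeq S^{\oplus r}$ of $\varphi$-modules, where $S$ is the simple $\varphi$-module corresponding to $E_N^r$. This gives the claim. There is essentially no obstacle: the only subtle point is the passage from $k[X,\sigma]/(N)$ to $\mathcal{R}/N\mathcal{R}$, which is where the hypothesis $N \neq X^r$ is used.
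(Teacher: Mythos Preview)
Your argument is correct and follows essentially the same route as the paper: the paper's one-line proof invokes Corollary~\ref{morita} (Morita equivalence of $\mathcal{R}/N\mathcal{R}$ with the field $E_N$), while you invoke its immediate consequence Corollary~\ref{matrixisomorphism} and unwind the semisimplicity of $\mathcal{M}_r(E_N)$ explicitly. Your extra care in identifying $D_N$ with $\mathcal{R}/N\mathcal{R}$ via the invertibility of $X$ modulo $N$ is a useful detail that the paper leaves implicit.
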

\begin{proof}
It follows directly from Corollary \ref{morita}.
\end{proof}
The Lemma implies that if $P$ is a right-divisor of $N$ with $\n(P) = N^e$, then the $\varphi$-module $D_P = k[X,\sigma]/k[X,\sigma]P$ is isomorphic to a direct sum of $e$ copies of a simple $\varphi$-module. From this, we deduce that $\End_\varphi(D_P) \simeq \mathcal{M}_e(E)$.

\paragraph{Ring of endomorphisms}

From now on, we write $N = PQ$ for some $Q\in k[X,\sigma]$. Note that it implies that $QN = QPQ$; therefore $NQ = QPQ$ (since $N$ lies in the centre) and, simplifying by $Q$, we get $N = QP$. In other words $P$ and $Q$ commute.
The following proposition compares the $\varphi$-module $D_P = k[X,\sigma]/k[X,\sigma]P$ and its ring of endomorphisms.
\begin{prop}\label{surjectivity}
The map
$$ \begin{array}{ccl}
D_P & \rightarrow & \End_\varphi(D_P)\\
R & \mapsto & m_{QR}~:~\left|\begin{array}{ccc}
D_P &\rightarrow &D_P\\
x & \mapsto & xQR
\end{array}\right.
\end{array}
$$
is a surjective additive group homomorphism.
\end{prop}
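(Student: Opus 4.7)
The plan is to dispose of well-definedness and additivity by a direct calculation, then reduce the surjectivity to elementary linear algebra via the matrix isomorphism of Corollary \ref{matrixisomorphism}. First I would check that $m_{QR}$ is a well-defined $\varphi$-module endomorphism of $D_P$, depending only on the class of $R$ modulo $k[X,\sigma]P$; the key identity is $PQR = NR = RN = RQP \in k[X,\sigma]P$, which uses that $N = PQ = QP$ is central. This guarantees both that right-multiplication by $QR$ preserves $k[X,\sigma]P$ and that the construction does not depend on the representative of $R$. Commutation with the $\varphi$-action (left-multiplication by $X$) is automatic, and additivity in $R$ is clear.

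Second, I would transport the setup through the isomorphism $k[X,\sigma]/k[X,\sigma]N \simeq \mathcal{M}_r(E)$ of Corollary \ref{matrixisomorphism}: since $P$ divides $N$ on the right, $D_P$ is a cyclic quotient of this ring. Write $\bar P, \bar Q$ for the images of $P, Q$; from $PQ = QP = N$ it follows that $\bar P \bar Q = \bar Q \bar P = 0$. Combined with the fact recalled just before the proposition that $D_P \simeq S^e$ where $S \simeq E^r$ is the simple $\mathcal{M}_r(E)$-module, a short dimension count forces $\mathrm{rank}\,\bar P = r-e$ and, symmetrically (using $\mathcal{N}(Q) = N^{r-e}$), $\mathrm{rank}\,\bar Q = e$. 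Setting $V := \ker\bar P$, the inclusion $\mathrm{im}\,\bar Q \subset \ker\bar P$ together with the equality of their $E$-dimensions gives $V = \mathrm{im}\,\bar Q = \ker\bar P$.

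Third, I would identify $D_P$ with $\Hom_E(V, E^r)$ via restriction to $V$: the restriction map $\mathcal{M}_r(E) \to \Hom_E(V, E^r)$ is surjective and its kernel is exactly $\mathcal{M}_r(E)\bar P$, the space of matrices vanishing on $V$. Endomorphisms of $\Hom_E(V, E^r)$ as a left $\mathcal{M}_r(E)$-module (with $\mathcal{M}_r(E)$ acting by post-composition on $E^r$) are precisely the precompositions by elements of $\End_E(V)$, hence $\End_\varphi(D_P) \simeq \End_E(V)^{op}$. Under these identifications, the map of the proposition becomes $f \mapsto \bar Q \circ f$ from $\Hom_E(V, E^r)$ to $\End_E(V)$, and surjectivity is immediate: $\bar Q$ restricts to a surjection $E^r \twoheadrightarrow V$, which over the field $E$ admits a linear section $s$, so every $g \in \End_E(V)$ is hit by $f := s \circ g$. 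The only genuinely delicate aspect of the whole argument is keeping track of the left-versus-right conventions that produce the ``op'' in $\End_E(V)^{op}$; once the identifications are correctly set up, the core surjectivity statement becomes essentially a tautology.
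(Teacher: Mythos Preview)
Your argument is correct and takes a genuinely different route from the paper's. Both proofs rest on the same structural input---that $D_N$ is a full matrix ring over $E$---but they exploit it differently. The paper stays inside the $\varphi$-module formalism: it proves first that $D_N \to \End_\varphi(D_N)$, $R \mapsto m_R$, is an isomorphism (Lemma~\ref{surjectivityN}), then given $u \in \End_\varphi(D_P)$ it lifts $u$ to $\tilde u \in \End_\varphi(D_N)$ using a splitting of $0 \to k[X,\sigma]P/k[X,\sigma]N \to D_N \to D_P \to 0$, writes $\tilde u = m_T$, and manipulates the relations $QS+VP=1$, $PT \equiv 0$, $QST \equiv AQS$ to extract $R$ with $u = m_{QR}$. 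You instead transport everything through Corollary~\ref{matrixisomorphism} to $\mathcal{M}_r(E)$, pin down $V=\ker\bar P=\mathrm{im}\,\bar Q$ by a rank count, identify $D_P \simeq \Hom_E(V,E^r)$ and $\End_\varphi(D_P)\simeq \End_E(V)^{op}$, and reduce the statement to the triviality that post-composition by a surjection $\bar Q\colon E^r\twoheadrightarrow V$ is surjective onto $\End_E(V)$. Your linear-algebra picture is cleaner and makes the surjectivity almost tautological once the identifications are set; the paper's argument, by contrast, produces an explicit preimage $R = SA$ in terms of a concrete element $S$ of $k[X,\sigma]$ satisfying $QS+VP=1$, which is closer to what one would implement. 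The two are in fact the same idea in disguise: your section $s\colon V\to E^r$ of $\bar Q$ is exactly $\bar S|_V$ for the paper's $S$, since $QS \equiv 1 \pmod P$ gives $\bar Q\bar S|_V = \mathrm{id}_V$.
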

Note that since $PQ = QP = N$ is central in $k[X,\sigma]$, the map above is well-defined. 
Indeed, we have to check that if $x \equiv x' \pmod P$ and $R \equiv R' \pmod P$ then $xQR \equiv x'QR' \pmod P$. Writing $x' = x + SP$ and $R' = R + TP$, we have:
\begin{eqnarray*}
x'QR' &=& xQR + SPQR + (xQT + SPQ)P \\
&\equiv &xQR + SNR \equiv xQR + SRN \equiv xQR + SRQP \equiv xQR \pmod P
\end{eqnarray*}
which is exactly what we want.
In order to prove the proposition, we will need the following lemma, that states that in the case $P = N$, that map is in fact an isomorphism.
\begin{lem}\label{surjectivityN}
Let $N \in k[X,\sigma]$. Then the map:
$$
\begin{array}{ccl}
D_N & \rightarrow & \End_\varphi(D_N)\\
R & \mapsto & m_R~:~\left|\begin{array}{ccc}
D_N &\rightarrow &D_N\\
x & \mapsto & xR
\end{array}\right.
\end{array}
$$
is an isomorphism of rings.
\end{lem}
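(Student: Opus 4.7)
The plan is to exploit the fact that $N$ is central in $k[X,\sigma]$ (as per the running hypothesis of this section), so that $k[X,\sigma]N$ is actually a two-sided ideal and $D_N = k[X,\sigma]/(N)$ inherits a natural ring structure. The map $R \mapsto m_R$ is then essentially the right regular representation of the ring $D_N$, and the lemma becomes a special case of the standard observation that, for any unital ring $A$, the map $A \to \End_A(A)$ sending $R$ to right-multiplication by $R$ (where $A$ is viewed as a left module over itself) is an isomorphism onto the full endomorphism ring.

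First I would check that $m_R$ is well-defined: if $x = SN \in k[X,\sigma]N$, then centrality gives $xR = SNR = SRN \in k[X,\sigma]N$, so right-multiplication by $R$ preserves the left ideal; and a change of representative $R \mapsto R + TN$ modifies $xR$ by $xTN \in k[X,\sigma]N$. In both steps the crucial ingredient is the centrality of $N$. Additivity $m_{R+S} = m_R + m_S$ is immediate, and the identity $m_{RS}(x) = xRS = m_S(m_R(x))$ gives the multiplicative compatibility (up to the usual convention for the composition law on $\End$).

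For the bijectivity, injectivity is simply the observation $R = m_R(1) = 0$ in $D_N$. For surjectivity, I would note that any $f \in \End_\varphi(D_N)$ is not merely $\varphi$-equivariant but in fact left-$k[X,\sigma]$-linear: it is $k$-linear by definition and it commutes with the action of $X$, and these two conditions together generate left-$k[X,\sigma]$-linearity. Consequently, for every $a \in k[X,\sigma]$ representing a class in $D_N$,
$$f(a) = f(a \cdot 1) = a \cdot f(1) = m_{f(1)}(a),$$
so $f = m_{f(1)}$ lies in the image. No step is a real obstacle here: the whole proof is the elementary ``evaluation at $1$'' trick applied to the right regular representation of $D_N$, the one nontrivial input being the centrality of $N$ that makes $D_N$ a ring in the first place.
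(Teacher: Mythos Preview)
Your argument is correct. The key difference from the paper's proof lies in the surjectivity step. You use the standard ``evaluation at $1$'' argument: once you observe that any $f \in \End_\varphi(D_N)$ is automatically left-$k[X,\sigma]$-linear (being $k$-linear and commuting with left-multiplication by $X$), the identity $f = m_{f(1)}$ is immediate. The paper instead proves surjectivity by a dimension count over $k^\sigma$: it invokes the isomorphism $\End_\varphi(D_N) \simeq \mathcal{M}_r(E)$ (from Corollary~\ref{matrixisomorphism}, which ultimately rests on the Azumaya property and the triviality of the Brauer group) to conclude that both sides have dimension $\delta r^2$, and then injectivity forces bijectivity. Your route is more elementary and more general---it needs only that $N$ be central, not irreducible, and makes no appeal to the structure theory of $\mathcal{R}/N\mathcal{R}$. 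The paper's route is terser but depends on the machinery already in place; in return it makes the dimension $\delta r^2$ explicit, which is not otherwise needed here.
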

\begin{proof}
The fact that our map is a morphism of rings is straightforward.
It is injective because $R = m_R(1)$. For the surjectivity, we remark that if $N$ is a commutative polynomial of degree $\delta$, $D_N$ has dimension $\delta r^2$ over $k^\sigma$ and, on the other hand, that if $E$ is the field $k^\sigma[X^r]/(N)$, $\End_\varphi(D_N)$ is isomorphic to $\mathcal{M}_r(E)$, so it also has dimension $\delta r^2$.
\end{proof}
\begin{proof}[Proof of Proposition \ref{surjectivity}]

We have the exact sequence of $\varphi$-modules:
$$ 0 \rightarrow k[X,\sigma]P/k[X,\sigma]N \rightarrow D_N \rightarrow D_P \rightarrow 0,$$
and $D_Q$ is isomorphic to $k[X,\sigma]P/k[X,\sigma]N$ \emph{via} the multiplication by $P$.
Since $D_N \simeq D_P^{\oplus r}$, this sequence is split. Let $s$ : $D_P \rightarrow D_N$ be a section. We have $Ps(1) = s(P) \equiv 0 \pmod{N}$, so there exists $S \in D_N$ such that $Ps(1) = NS$. Thus $s(1) = QS$. On the other hand, $QS = s(1) \equiv 1 \pmod{P}$. Hence there exists some $V \in k[X,\sigma]$ such that
$$ QS + VP = 1.$$
It implies that $D_P$ is isomorphic to $k[X,\sigma]QS/k[X,\sigma]N$ \emph{via} the multiplication by $QS$.

Let $u \in \End_\varphi(D_P)$, and let $A = u(1) \in D_P$. For all 
$x \in k[X,\sigma]$, $u(x) = xu(1) = xA$. In other words, $u$ is the 
mulitplication by $A$, \emph{i.e.} $u = m_A$. We then want to show that 
$m_A$ is of the form $m_{QR}$ for some $R \in D_P$. Let $\tilde u$ the 
endomorphism of $k[X,\sigma]QS/k[X,\sigma]N$ deduced from $u$: we have 
$\tilde u (QS) = AQS$.

Since $D_N = k[X,\sigma]QS/k[X,\sigma]N \oplus k[X,\sigma]P / k[X,\sigma]N$ (decomposition of $\varphi$-modules), we can extend $\tilde u$ to $D_N$ by setting $\tilde u (P) = 0$. By Lemma \ref{surjectivityN}, there exists $T \in D_N$ such that for all $x \in D_N$, $\tilde u (x) = xT$. In particular:
$$ \left\{ \begin{array}{ccc}
PT &\equiv&0 \pmod N\\
QST & \equiv & AQS \pmod N
\end{array}
\right.$$
Since $VPT + QST = T$, we have $QST \equiv T \pmod N$. So, for $x \in D_N$, we get $\tilde u (xQS) = xQST = xQSAQS = (xQSA)QS$. Hence, for $x \in D_P$, $u(x) = xQSA$. Setting $R = SA$, we have $u = m_{QR}$.
\end{proof}
\begin{cor}\label{random}
Let $R$ be a random variable uniformly distributed on $D_P$. Then the right multiplication by $QR$, $m_{QR}$, is uniformly distributed on $\End_\varphi(D_P) \simeq \mathcal{M}_e(E).$
\end{cor}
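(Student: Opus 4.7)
The corollary is essentially a formal consequence of Proposition \ref{surjectivity}, so the plan is short. I would first observe that all groups involved are finite: $D_P = k[X,\sigma]/k[X,\sigma]P$ is finite since $k$ is a finite field and $P$ is nonzero, and $\End_\varphi(D_P) \simeq \mathcal{M}_e(E)$ is finite for the same reason. Therefore the map $\Phi : R \mapsto m_{QR}$ from $D_P$ to $\End_\varphi(D_P)$, shown in Proposition \ref{surjectivity} to be a surjective morphism of additive groups, is a surjective homomorphism between finite abelian groups.

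Then I would invoke the standard fact that the pushforward of the uniform distribution under a surjective homomorphism of finite abelian groups is again uniform. Concretely: for any $u \in \End_\varphi(D_P)$, the fiber $\Phi^{-1}(u)$ is a coset of $\ker \Phi$, hence has cardinality $|\ker \Phi|$ independent of $u$. So for $R$ uniform on $D_P$ we have
\[ \Pr[\,m_{QR} = u\,] = \frac{|\Phi^{-1}(u)|}{|D_P|} = \frac{|\ker \Phi|}{|D_P|} = \frac{1}{|\End_\varphi(D_P)|}, \]
which is exactly the claim.

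There is no genuine obstacle here: the only nontrivial ingredient is the surjectivity of $\Phi$, which has already been established in Proposition \ref{surjectivity}, and additivity of $\Phi$ is clear from the linearity of right-multiplication. The statement and its one-line proof are really just recording the probabilistic consequence of the surjectivity result, which will be useful in the algorithmic sections where one wants to sample uniformly from $\mathcal{M}_e(E)$ by sampling uniformly from $D_P$.
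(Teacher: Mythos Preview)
Your proof is correct and follows essentially the same approach as the paper: both use that the map $R \mapsto m_{QR}$ is surjective and additive (from Proposition~\ref{surjectivity}), so every fiber is a coset of the kernel and hence has the same cardinality, which forces the pushforward of the uniform distribution to be uniform. The paper phrases this as ``$k$-linearity together with surjectivity implies that all fibers have the same cardinality,'' which is exactly your coset argument.
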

\begin{proof}
Since $R \mapsto m_{QR}$ is surjective, the probability that $m_{QR}$ is equal to $u \in \End_\varphi(D_P)$ is proportional to the cardinality of the fiber above $u$. We conclude the proof by remarking that $k$-linearity together with surjectivity implies that all fibers have the same cardinality.
\end{proof}

\paragraph{Some remarks about rgcd's and llcm's}

Let us present a rather elementary geometric point of view on rgcd's and llcm's in skew polynomial rings. If $P \in k[X,\sigma]$ is a divisor of an irreducible commutative polynomial $N$ of norm $N^e$, and $P_1$ is a right-divisor of $P$ of norm $N^{e_1}$, then $k[X,\sigma]P_1/k[X,\sigma]P \subset D_P$ is a sub-$E$-vector space $F_1$ of $D_P$ of dimension $e - e_1$.

If $P_2$ is another right-divisor of $P$ of norm $N^{e_2}$, it defines a sub-$E$-vector space $F_2$ of dimension $e - e_2$. 

The intersection and sum of these vector spaces have a description in terms of rgcd's and llcm's:
\begin{lem}
Let $R = \text{rgcd}(P_1,P_2)$ and let $M = \text{llcm}(P_1,P_2)$. Then: 
\begin{itemize}
\item $F_1 + F_2 = k[X,\sigma]R/k[X,\sigma]P$,
\item $F_1 \cap F_2 = k[X,\sigma]M/k[X,\sigma]P$.
\end{itemize}
\end{lem}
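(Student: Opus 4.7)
The plan is to reduce the lemma to the standard correspondence, in a principal left ideal domain, between divisibility and containment of left ideals. Recall that $k[X,\sigma]$ admits a right Euclidean division algorithm; a minimal-degree-element argument then shows that every left ideal of $k[X,\sigma]$ is principal. For any two skew polynomials $A,B$, this principality gives the following characterizations (which can be found in \cite{jac}, \S 1.2): writing $k[X,\sigma]A + k[X,\sigma]B = k[X,\sigma]D$ and $k[X,\sigma]A \cap k[X,\sigma]B = k[X,\sigma]L$, one checks that $D$ is the largest common right-divisor of $A$ and $B$ (since $A \in k[X,\sigma]D$ means exactly that $D$ is a right-divisor of $A$), so $D = \text{rgcd}(A,B)$ up to a unit; dually $L = \text{llcm}(A,B)$.

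Applied to $P_1$ and $P_2$, these identities read
$$k[X,\sigma]R \;=\; k[X,\sigma]P_1 + k[X,\sigma]P_2, \qquad k[X,\sigma]M \;=\; k[X,\sigma]P_1 \cap k[X,\sigma]P_2.$$
Because $P_1$ and $P_2$ are right-divisors of $P$, one has $k[X,\sigma]P \subseteq k[X,\sigma]P_i$ for $i = 1,2$, hence also $k[X,\sigma]P \subseteq k[X,\sigma]R$ and $k[X,\sigma]P \subseteq k[X,\sigma]M$. Thus all four left ideals $k[X,\sigma]P_1, k[X,\sigma]P_2, k[X,\sigma]R, k[X,\sigma]M$ contain $k[X,\sigma]P$, and their images in $D_P = k[X,\sigma]/k[X,\sigma]P$ are well-defined left sub-$k[X,\sigma]$-modules (in particular sub-$\varphi$-modules, and a fortiori sub-$E$-vector spaces).

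The second step is to invoke the elementary fact about quotients of modules: if $A,B$ are submodules of a module $N$ and $C \subseteq A \cap B$, then inside $N/C$ one has $(A/C) + (B/C) = (A+B)/C$ and $(A/C) \cap (B/C) = (A \cap B)/C$. Applying this with $N = k[X,\sigma]$, $C = k[X,\sigma]P$, $A = k[X,\sigma]P_1$, $B = k[X,\sigma]P_2$ yields
$$F_1 + F_2 = \frac{k[X,\sigma]P_1 + k[X,\sigma]P_2}{k[X,\sigma]P} = \frac{k[X,\sigma]R}{k[X,\sigma]P},$$
$$F_1 \cap F_2 = \frac{k[X,\sigma]P_1 \cap k[X,\sigma]P_2}{k[X,\sigma]P} = \frac{k[X,\sigma]M}{k[X,\sigma]P},$$
which is exactly the two statements of the lemma. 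There is really no obstacle here beyond keeping the conventions straight: the only substantive input is the dictionary between rgcd/llcm and sums/intersections of principal left ideals, which is standard for principal left ideal domains, and everything else is the third isomorphism theorem.
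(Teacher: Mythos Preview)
Your proof is correct and is precisely the standard argument the paper has in mind: the proof in the paper reads simply ``Left to the reader,'' and your dictionary between $\text{rgcd}/\text{llcm}$ and sums/intersections of principal left ideals, followed by passage to the quotient by $k[X,\sigma]P$, is exactly what a reader is expected to supply.
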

\begin{proof}
Left to the reader.
\end{proof}
\begin{rmq}
We will mainly use this Lemma when $P_1$ is irreducible. Then $k[X,\sigma]P_1/k[X,\sigma]P$ is an hyperplane in $D_P$. If we take the image of this hyperplane under any automorphism of $D_P$, we get another hyperplane, and it is likely that the intersection of this hyperplane with $k[X,\sigma]P_1/k[X,\sigma]P$ has codimension $2$ in $D_P$, and hence it is an hyperplane in $k[X,\sigma]P_1/k[X,\sigma]P$. We get this way an irreducible divisor of the quotient of the right division of $P$ by $P_1$.
\end{rmq}
\subsubsection{Counting factorizations}
In this section, we explain how to compute the number of factorizations of a monic skew polynomial $P \in k[X,\sigma]$ as a product of monic irreducible polynomials.
\begin{lem}
Let $P \in k[X,\sigma]$ be a monic {\'e}tale skew polynomial. Assume that $\n(P) = N^e$ with $N$ irreducible of degree $d$, and $P$ is a right-divisor of $N$. Then the number of factorizations of $P$ as a product of monic irreducible skew polynomials is the $q^d$-factorial $[e]_{q^d}! = \frac{(q^{de}-1)\cdots (q^d-1)}{(q^d -1)^e} $ 
\end{lem}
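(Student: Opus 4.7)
The plan is to translate the counting of factorizations of $P$ into a linear algebra problem via the Morita equivalence of Corollary \ref{morita}, and then to invoke a classical Gaussian binomial computation.

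First, I would recall (as noted at the opening of \S 1.4) that factorizations of $P$ as products of monic irreducible skew polynomials are in bijection with Jordan-H{\"o}lder sequences of the $\varphi$-module $D_P$. Since $P$ is a right-divisor of the central polynomial $N$, the module $D_P$ is annihilated by $N$, so it is naturally a left $\mathcal{R}/N\mathcal{R}$-module.

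Next, Corollary \ref{morita} furnishes a Morita equivalence between the category of left $\mathcal{R}/N\mathcal{R}$-modules and the category of finite $E$-vector spaces, where $E = k^\sigma[X^r]/(N)$ is a field of cardinality $q^d$. Any such equivalence preserves simple objects and short exact sequences, so Jordan-H{\"o}lder sequences of $D_P$ correspond bijectively to complete flags of the $E$-vector space $V$ associated to $D_P$ under the equivalence. By the preceding lemma, $D_P$ is a direct sum of $e$ copies of a simple $\varphi$-module, which translates into $V \simeq E^e$.

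It then suffices to count complete flags $0 = V_0 \subsetneq V_1 \subsetneq \cdots \subsetneq V_e = E^e$. Choosing the $V_i$ one by one starting from $V_1$, at step $i$ one selects a line in the $(e - i + 1)$-dimensional quotient $V/V_{i-1}$, which offers $(q^{d(e-i+1)} - 1)/(q^d - 1)$ possibilities. Taking the product over $i = 1, \ldots, e$ yields exactly $[e]_{q^d}!$, as desired.

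The main obstacle is really only notational: one must check that the Morita equivalence identifies Jordan-H{\"o}lder sequences on both sides (automatic for any equivalence of abelian categories) and that the $E$-dimension of $V$ equals the exponent $e$ appearing in $\n(P) = N^e$, which is exactly what the previous lemma provides via $D_P \simeq S^{\oplus e}$.
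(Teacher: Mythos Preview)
Your argument is correct and follows essentially the same route as the paper. The paper phrases the count inductively---it shows that the monic irreducible right-divisors of $P$ are in bijection with the $E$-lines in $D_P$ (hence there are $(q^{de}-1)/(q^d-1)$ of them) and then inducts on $e$---while you package the same induction as a single count of complete flags in $E^e$ via the Morita equivalence. The two computations are identical once unwound.
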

\begin{proof}
By induction on $e$, it is enough to prove that $P$ has exactly $\frac{q^{de} -1}{q^d -1}$ monic irreducible right-divisors. The number of monic irreducible right-divisors is also the number of simple sub-$\varphi$-modules of $k[X,\sigma]/k[X,\sigma]P$. Such submodules are in bijection with $k^\sigma[X^r]/(N)$-lines in $k[X,\sigma]/k[X,\sigma]P$ (if $P_1$ is an irreducible right-divisor of $P$, every irreducible right-divisor of $P$ can be written as the image of $P_0$ by an endomorphism of $k[X,\sigma]/k[X,\sigma]P$), so it has the cardinality of the projective space $\mathbf{P}(E^e)$, which is $\frac{q^{de} -1}{q^d -1}$.
\end{proof}

Let us now define the \emph{type} of a skew polynomial $P$. Recall first that the endomorphism $\varphi^r$ of $D_P$ defined over $k^\sigma$ can be put into Jordan form, and that a Jordan block is by definition  an invariant subspace in a basis of which the restriction of $\varphi^r$ has a matrix of the form:
$$\begin{pmatrix}
A & I & 0 & \cdots & 0\\
0 & A & I & \ddots & \vdots\\
\vdots & \ddots & \ddots & \ddots & \vdots\\
\vdots & \ddots & \ddots & \ddots & I\\
0 &\cdots & \cdots  & 0 & A
\end{pmatrix},
 $$
 with $A$ having a characteristic polynomial that is irreducible in $k^\sigma[X^r]$. We will say that a Jordan block has size $i$ if the number of matrices $A$ that appear in this block is $i$.
\begin{defi}
Let $P \in k[X,\sigma]$. Assume that $\n(P) = N^e$ for some integer $e$. For $i \geq 1$, let $e_i$ be the number of Jordan blocks of $(D_P, \varphi^r)$ of size at least $i$. Let $n$ be the largest index such that $e_i \neq 0$, we say that $P$ has type $(e_1, \ldots, e_n)$.
\end{defi}
This means that if $P$ has type $(e_1, \ldots, e_n)$, the action of $\sigma^r$ on the corresponding representation has exactly $e_1$ Jordan blocks, $e_2$ of which contain a block of the form $\begin{pmatrix} A & I\\ 0 & A \end{pmatrix},$ etc.
\begin{rmq}\label{dualdiagram}
The type is determined by the nonincreasing sequence $(a_1, \ldots, a_m)$, $a_i$ being the size of the $i$-th largest Jordan block of $(D_p, \varphi^r)$. The Young diagram associated to $(a_1, \ldots, a_m)$ is dual to the one associated to $(e_1, \ldots, e_n)$. We will say that $(a_1, \ldots, a_m)$ is the \emph{dual} sequence of $(e_1, \ldots, e_n)$. We also say that $(a_1, \ldots, a_m)$ is the \emph{dual type} of $P$.
\end{rmq}
A skew polynomial $P \in k[X,\sigma]$ has type $(e)$ (with $1 \leq e \leq r$) if and only if it is a divisor of an irreducible polynomial $N \in k[X,\sigma]$.
\begin{defi}
Let $P \in k[X,\sigma]$ be a monic skew polynomial. Let $N_1^{a_1}\cdots N_t^{a_t}$ be the factorization of $\n(P)$ as a product of monic irreducible polynomials. For $1 \leq i \leq t$, let $(e_1^{(i)}, \ldots, e_{n_i}^{(i)})$ be the type of the restriction of $\varphi^r$ to the characteristic invariant subspace associated to $N_i$. We say that $P$ has type:
$$(N_1, (e_1^{(1)}), \ldots, e_{n_1}^{(1)}) , \ldots, (N_t, (e_1^{(t)}, \ldots, e_{n_t}^{(t)})).$$
\end{defi}
As we have seen before, if $P \in k[X,\sigma]$ is a monic polyomial, then the set of all factorizations of $P$ as a product of monic irreducible polynomials is in bijection with the Jordan-H{\"o}lder sequences for the $\varphi$-module $D_P$. It is also in bijection with all bases of $D_P$ in which $\varphi^r$ has Jordan form. This can be described in terms of the types of some factors of $P$, as we are going to explain now.

First, we assume that $\n(P) = N^e$ with $N \in k^\sigma[X ^r]$ irreducible. Let $(e_1, \ldots, e_n)$ be the type of $P$. We denote by $V$ the representation associated to $D_P$, with $g$ the endomorphism through which $\sigma^r$ acts on $V$. If $W$ is any nonzero irreducible invariant subspace of $V$, the action of $g$ on $W$ is given by the companion matrix of $N$ in some basis. Let $(a_1, \ldots, a_m)$ be the dual sequence of $(e_1, \ldots, e_n)$ as defined in Remark \ref{dualdiagram}.
\begin{lem}\label{pathweight}
Let $\delta = \deg N$. Let $1 \leq i \leq m$ such that $i=m$ or $a_i> a_{i+1}$. Let $i_0$ be the smallest $j$ such that $a_j = a_i$. Then there are $q^{\delta(i-1)} + q^{\delta i} + \cdots + q^{\delta(i_0-1)}$ invariant irreducible subspaces $V'$ of $V$ such that the quotient $V/V'$ has a type whose dual is $(a_1, \ldots, a_i -1, a_{i+1}, \ldots, a_m)$ (or $(a_1, \ldots,a_{m-1})$ if $i=m$ and $a_m = 1$).
\end{lem}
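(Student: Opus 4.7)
The plan is to realize $V$ as a module over $R := k^\sigma[X^r]$, with $X^r$ acting via $g$. The dual-type hypothesis gives a direct sum decomposition
$$V \cong \bigoplus_{j=1}^{m} R/(N^{a_j}),$$
with standard generators $e_j$, and I write $f_j := N^{a_j-1} e_j$ for the socle generator of the $j$-th summand. The socle $V[N] = \bigoplus_{j} E f_j$ is then an $m$-dimensional $E$-vector space, and the simple invariant subspaces $V'$ of $V$ correspond bijectively to $E$-lines in $V[N]$, parametrised by vectors $v = \sum_j \lambda_j f_j$ modulo $E^\times$, with $\lambda_j \in E$.

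The core claim is that $V/V'$ has the announced dual type (the partition obtained from $(a_1, \ldots, a_m)$ by decreasing $a_i$ to $a_i - 1$) if and only if $v$ lies in $(N^{a_i - 1} V) \cap V[N]$ but not in $(N^{a_i} V) \cap V[N]$. Using the decomposition, $(N^{k-1} V) \cap V[N] = \bigoplus_{j:\, a_j \geq k} E f_j$, which translates the two conditions into: $\lambda_j = 0$ for all $j > i$ and $\lambda_{j^*} \neq 0$ for at least one $j^* \in \{i_0, \ldots, i\}$. To prove the ``if'' direction, pick the smallest such $j^*$ and construct an $R$-linear automorphism $\phi$ of $V$ by
$$\phi(e_{j^*}) = e_{j^*} + \sum_{j \neq j^*} c_j e_j, \qquad \phi(e_j) = e_j \text{ for } j \neq j^*,$$
with the $c_j \in R$ chosen so that $\phi(\lambda_{j^*} f_{j^*}) = v$. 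Identifying coefficients yields: for $j$ with $a_j = a_{j^*}$, $c_j \equiv \lambda_j/\lambda_{j^*} \pmod N$; for $j < i_0$ (where $a_j > a_{j^*}$), $c_j \equiv (\lambda_j/\lambda_{j^*}) N^{a_j - a_{j^*}} \pmod{N^{a_j - a_{j^*}+1}}$; and for $j$ with $a_j < a_{j^*}$ the comparison forces $\lambda_j = 0$, which is exactly our hypothesis. Crucially, the prescription for $j < i_0$ forces $N^{a_j - a_{j^*}} \mid c_j$, which is precisely what is needed for $\phi$ to respect the relation $N^{a_{j^*}} e_{j^*} = 0$. Hence $\phi$ is an automorphism of $V$ sending the line $E f_{j^*}$ onto $V' = Ev$, so
$$V/V' \cong V/(E f_{j^*}) \cong \bigoplus_{j \neq j^*} R/(N^{a_j}) \oplus R/(N^{a_{j^*} - 1}),$$
which after reordering has the prescribed dual type. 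The ``only if'' direction follows by the same construction applied to whichever plateau actually contains the block being reduced: any $v$ violating one of the two conditions lies in the analogous subspace for a different plateau, and produces a quotient in which a block not of size $a_i$ is reduced.

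The count is then a direct dimension calculation. Since $(N^{a_i - 1} V) \cap V[N]$ has $E$-dimension $\#\{j : a_j \geq a_i\} = i$ and $(N^{a_i} V) \cap V[N]$ has $E$-dimension $\#\{j : a_j \geq a_i + 1\} = i_0 - 1$, and since the second is a subspace of the first, the number of $E$-lines in the first that meet the complement nontrivially equals
$$\frac{q^{\delta i} - q^{\delta (i_0 - 1)}}{q^\delta - 1} = q^{\delta (i_0 - 1)} + q^{\delta i_0} + \cdots + q^{\delta (i-1)},$$
which is the announced formula. The main difficulty is the change-of-basis step: the congruence prescriptions for the $c_j$ must simultaneously force $\phi(\lambda_{j^*} f_{j^*}) = v$ and preserve all the relations $N^{a_j} e_j = 0$, the latter being automatic only thanks to the divisibility $N^{a_j - a_{j^*}} \mid c_j$ built into the first constraint.
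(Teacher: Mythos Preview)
Your proof is correct and, at the structural level, parallels the paper's argument: both construct an explicit automorphism (a unipotent change of basis) carrying a ``standard'' simple submodule onto the given $V'$ to identify the quotient type, and both obtain the ``only if'' direction by exhausting all simple submodules across the different plateaus.

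The genuine difference is one of language. The paper works in an explicit Jordan basis $(\varepsilon_{j,l})$ over $k^\sigma$, parametrises the relevant $V'$ by tuples of scalar coefficients, and verifies exhaustiveness by checking that the total over all plateaus matches $\#\mathbf{P}(E^m)$. You instead work intrinsically with the $R$-module structure: you identify the simple submodules with $E$-lines in the socle $V[N]$, and characterise the correct plateau via the filtration $(N^k V)\cap V[N]$ --- namely, the height $\min\{a_j:\lambda_j\neq 0\}$. This gives a cleaner, coordinate-free criterion and makes the count a direct subtraction of two projective-space cardinalities rather than an \emph{a posteriori} tally. Your transvection $\phi$ is exactly the paper's ``upper-triangular'' automorphism, but your verification that $N^{a_{j^*}}\phi(e_{j^*})=0$ (via the forced divisibility $N^{a_j-a_{j^*}}\mid c_j$ for $j<i_0$) is more transparent than the paper's coordinate check. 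In short: same skeleton, but your module-theoretic packaging is tidier and makes the role of the socle filtration explicit.
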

\begin{proof}
Denote by $(\varepsilon_{1,1},\ldots,\varepsilon_{1,\delta}, \varepsilon_{2,1},\ldots, \varepsilon_{2,\delta}, \ldots)$ a basis of $V$ in which the matrix of $g$ has Jordan form. More precisely, for all $1\leq i \leq m$, and for all $1\leq j \leq t_i$ and $1\leq l \leq \delta$, we have $g(\varepsilon_{j,l}) = \varepsilon_{j,l+1}$ if $(j,l)$ is not of the shape $(j,1)$ for some integer $j \geq 2$, or of the shape $(j,\delta)$ for some integer $j \geq 1$, $g(\varepsilon_{j, 1}) = \varepsilon_{\delta u,\delta} + \varepsilon_{\delta u +1, 2}$ if $j \geq 2$, and $g(\varepsilon_{j,\delta}) = \sum_{l=1}^\delta \alpha_{l}e_{j,l}$, where $\sum_{l=1}^{\delta} \alpha_l X^{r(l-1)}= N$ (it is the characteristic polynomial of the induced endomorphism on any irreducible invariant subspace).\\
There are $i_0 - 1$ Jordan blocks of $g$ whose length is greater than the length of the $i$-th block. For $\lambda = (\lambda_{1,1}, \ldots, \lambda_{1,\delta},\ldots, \lambda_{i_0-1,\delta}) \in {k^\sigma}^{\delta(i_0 -1)}$, let $v_\lambda = e_{i_0,1} + \sum_{j=1}^{i_0 -1} \sum_{l = 1}^\delta \lambda_{j,l} e_{j,l}$. Since two such vectors $v_\lambda$, $v_\mu$ are not colinear, they generate distinct invariant subspaces $V_\lambda$, $V_\mu$, which are clearly isomorphic to $W$. Moreover, the quotient $V/V_\lambda$ has the same type as $V/V_{(0)}$ because the map $V \rightarrow V$ that sends $\varepsilon_{i_0,1}$ to $v_\lambda$ and is the identity outside the invariant subspace generated by $\varepsilon_{i_0,1}$ is an isomorphism (its matrix is upper triangular). One can build the same way invariant subspaces with quotients of the same type as generated by vectors of the shape $\varepsilon_{i_0+1,1} + \sum_{j=1}^{i_0 }  \sum_{l = 1}^\delta \lambda_{j,l} \varepsilon_{j,l} , \ldots, \varepsilon_{i,1} + \sum_{j=1}^{i -1}  \sum_{l = 1}^\delta \lambda_{j,l} \varepsilon_{j,l}$. There are exactly $q^{\delta i_0-1} + \cdots + q^{\delta i-1}$ invariant subspaces that are built in this way. Doing such constructions for each $i'$ satisfying the hypotheses of the lemma, we get exactly $\frac{q^{\delta m} -1}{q^\delta -1}$ irreducible invariant subspaces, which means all of them. Among these subspaces, the ones for which the quotient has the requested shape are exactly the  $q^{\delta(i_0-1)} + \cdots + q^{\delta(i-1)}$ built for the first $i$ we considered. This proves the lemma.
\end{proof}
In order to compute the number of Jordan-H{\"o}lder sequences of $g$, consider the following diagram:
$$\begin{tabular}{|c|c|c|c|}
\multicolumn{1}{c}{1} & \multicolumn{1}{c}{$q^\delta$} & \multicolumn{1}{c}{$\ldots$} & \multicolumn{1}{c}{$q^{\delta(m-1)}$}\\
\hline
$a_1$ & $a_2$ & $\ldots$ & $a_{m}$\\
\hline
\end{tabular}$$
with $a_1\geq \ldots \geq a_m$. An \emph{admissible path} is a transformation of this table into another table  $\begin{tabular}[b]{|c|c|c|c|}
\multicolumn{1}{c}{1} & \multicolumn{1}{c}{$q^\delta$} & \multicolumn{1}{c}{$\ldots$} & \multicolumn{1}{c}{$q^{\delta(m'-1)}$}\\
\hline
$a_1'$ & $a_2'$ & $\ldots$ & $a_{m'}'$\\
\hline
\end{tabular}$ such that 
\begin{itemize}
\item either $m' = m-1$, $a_j' = a_j$ for $1 \leq j \leq m-1$, if $a_m = 1$;
\item or $m' = m$, $a_j' = a_j$ for all $j\neq i$, with $1 \leq i \leq m$ such that $a_{i} > a_{i+1}$.
\end{itemize}
To such a path $\gamma$, we affect a weight $w(\gamma)$, which is the sum of the coefficients written above the cells of the first table containing the same number $a_i$ as the cell whose coefficient was lowered in the second table. Here is an example of a table and all the admissible paths with the corresponding weights:
$$
\xymatrix{
& {\begin{tabular}{|c|c|c|c|}
\multicolumn{1}{c}{1} & \multicolumn{1}{c}{$q^\delta$} & \multicolumn{1}{c}{$q^{2\delta}$} & \multicolumn{1}{c}{$q^{3\delta}$}\\
\hline
$3$ & $2$ & $2$ & $1$\\
\hline
\end{tabular}} \ar[dl]_{1} \ar[d]^{q^\delta + q^{2\delta}} \ar[dr]^{q^{3\delta}}\\
{\begin{tabular}{|c|c|c|c|}
\multicolumn{1}{c}{1} & \multicolumn{1}{c}{$q^\delta$} & \multicolumn{1}{c}{$q^{2\delta}$} & \multicolumn{1}{c}{$q^{3\delta}$}\\
\hline
$2$ & $2$ & $2$ & $1$\\
\hline
\end{tabular}}
&
{\begin{tabular}{|c|c|c|c|}
\multicolumn{1}{c}{1} & \multicolumn{1}{c}{$q^\delta$} & \multicolumn{1}{c}{$q^{2\delta}$} & \multicolumn{1}{c}{$q^{3\delta}$}\\
\hline
$3$ & $2$ & $1$ & $1$\\
\hline
\end{tabular}}
&
{\begin{tabular}{|c|c|c|}
\multicolumn{1}{c}{1} & \multicolumn{1}{c}{$q^\delta$} & \multicolumn{1}{c}{$q^{2\delta}$} \\
\hline
$3$ & $2$ & $2$\\
\hline
\end{tabular}}
}$$
By lemma \ref{pathweight}, the weight of an admissible path from one table to another, is the number of irreducible invariant subspaces of an endomorphism $g$ with type whose dual is given by the first table such that the quotient has the type given by the second table. Therefore, a sequence of admissible paths ending to an empty table represents a class of Jordan-H{\"o}lder sequences. Thus the number of distinct sequences in this class is the product of the weights of the paths along the sequence. Hence, the number of Jordan-H{\"o}lder sequences for $g$ is $\sum_{(\gamma_1,\ldots \gamma_\tau)} \prod_{i=1}^{\tau} w(\gamma_i)$ the sum being taken on all sequences $(\gamma_1, \ldots, \gamma_\tau)$ of admissible paths ending at the empty table (so $\tau = \sum_{j=1}^m a_j$).
\begin{cor}
Let $P \in k[X,\sigma]$ be a monic {\'e}tale polynomial of dual type $(a_1, \ldots, a_m)$. Then the number of factorizations of $P$ as a product of monic irreducible polynomials is
$$\sum_{(\gamma_1,\ldots \gamma_\tau) \text{ admissible}} \prod_{i=1}^{\tau} w(\gamma_i).$$
\end{cor}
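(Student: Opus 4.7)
The plan is to reduce the count to an iterated application of Lemma \ref{pathweight} via the bijection between factorizations of $P$ and Jordan--Hölder sequences, and then to organize the bookkeeping as a sum over sequences of admissible paths.

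First I would recall that factorizations of a monic étale $P$ as a product of monic irreducibles are in bijection with Jordan--Hölder sequences of $D_P$ (this is Ore's theorem together with the correspondence between similarity classes and isomorphism classes of $\varphi$-modules). By Proposition \ref{calculrep} and Katz's equivalence (Theorem \ref{katz}), these are in turn in bijection with Jordan--Hölder sequences of the Galois representation $V$ attached to $D_P$, equipped with the action $g = \sigma^r$. Since the dual type is a complete invariant of a nilpotent-plus-$N$-isotypic module like $V$, every Jordan--Hölder sequence of $V$ consists of a sequence of choices: at each step we pick an irreducible invariant subspace $W$ of the current module $V'$, and pass to the quotient $V'/W$, whose dual type is obtained from that of $V'$ by an admissible modification.

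Next I would proceed by induction on $\tau = a_1 + \cdots + a_m$. The base case $\tau = 0$ is trivial (the empty table, one empty factorization, one empty product of weights). For the inductive step, a Jordan--Hölder sequence of $V$ is the data of an irreducible invariant subspace $W \subset V$ together with a Jordan--Hölder sequence of $V/W$. By Lemma \ref{pathweight}, for each admissible path $\gamma_1$ from the dual type $(a_1,\ldots,a_m)$ of $V$ to a dual type $(a_1',\ldots,a_{m'}')$, there are exactly $w(\gamma_1)$ choices of $W$ making $V/W$ have dual type $(a_1',\ldots,a_{m'}')$. Moreover, every irreducible invariant subspace of $V$ arises in this way for exactly one admissible path (the discussion following the proof of Lemma \ref{pathweight} shows the total weight is $(q^{\delta m}-1)/(q^\delta-1)$, which is precisely the number of such subspaces, so the classification by $\gamma_1$ partitions them).

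Applying the induction hypothesis to $V/W$, the number of Jordan--Hölder sequences of $V/W$ equals $\sum_{(\gamma_2,\ldots,\gamma_\tau)} \prod_{i=2}^\tau w(\gamma_i)$ taken over admissible path sequences starting at the target dual type of $\gamma_1$ and ending at the empty table. Summing over $\gamma_1$ and distributing the factor $w(\gamma_1)$ yields
\[
\sum_{(\gamma_1,\ldots,\gamma_\tau)} \prod_{i=1}^\tau w(\gamma_i),
\]
the sum being taken over all admissible path sequences ending at the empty table, which is the desired formula.

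The only real subtlety (and hence the main thing to be careful about) is the claim that distinct admissible paths correspond to disjoint families of irreducible invariant subspaces, so that Lemma \ref{pathweight} really is an exact enumeration rather than a lower bound; this follows from the total-weight computation recalled above, which forces equality at each block of cells sharing a common $a_i$-value. Everything else is a direct recursion on $\tau$.
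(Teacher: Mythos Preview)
Your proposal is correct and follows essentially the same approach as the paper: the paper's argument is precisely the discussion preceding the Corollary, which identifies factorizations with Jordan--H\"older sequences, invokes Lemma \ref{pathweight} to count irreducible invariant subspaces with a quotient of prescribed dual type, and then observes that the total count is the weighted sum over admissible path sequences. Your explicit induction on $\tau$ and your remark about disjointness (handled in the paper by the total-count argument inside the proof of Lemma \ref{pathweight}) make the reasoning slightly more careful, but the substance is identical.
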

\begin{ex}
If $P$ has type $(e)$ (so that its dual type is $(1, \ldots, 1)$), then there is only one admissible path, and the number of factorizations is $[q^\delta]_e = \prod_{i=1}^e \frac{q^{\delta i} -1}{q^\delta -1}$.
\end{ex}
\begin{ex}\label{indecomposable}
If $P$ has type $(1, \ldots, 1)$, (so that its dual type is $(a)$), there is only one admissible path, and only one factorization. The formula also shows that polynomials of this type are the only ones that have a unique factorization. These polynomials have already been studied for their interesting properties, under the name of \emph{lclm-indecomposable} (see \cite{jac2}, Chap. 3, Th. 21 and 24 for properties, and \cite{felix2} for applications). 
\end{ex}
For the general case, there is no such nice formula, but we can still explain how to get the number of factorizations. By the Chinese remainders Theorem, $V$ is a direct sum of invariant subspaces on which the induced endomorphisms have minimal polynomial that is a power of an irreducible. Here, the type of $g$ is defined again as the data of $((W_1,T_1), \ldots, (W_s,T_s))$ where the $W_l$'s are the distinct classes of irreducible invariant subspaces of $V$, and the $T_l$'s are the tables representing the types of the endomorphisms induced on the corresponding subspaces of $V$. The notion of dual type can be defined as previously, as well as the notion of admissible path.
\begin{prop}
Let $g$ be an endomorphism of an $\F_q$-vector space $V$. Assume that the dual type of $g$ is $((W_1,T_1), \ldots, (W_s,T_s))$. Denote by $\delta_i$ the dimension of $W_i$, and by $ \tau_i$ the sum of the coefficients in table $T_i$. Then the number of Jordan-H{\"o}lder sequences of $g$ is 
$$\frac{(\tau_1 + \cdots + \tau_s)!}{\tau_1!\cdots \tau_s!} \prod_{(\Gamma_1, \ldots, \Gamma_s)} w(\Gamma_1)\cdots w(\Gamma_s),$$
the product being taken over all the $s$-uples $(\Gamma_1, \ldots, \Gamma_s)$ of admissible path sequences ending at the empty tables.
\end{prop}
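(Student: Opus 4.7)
The plan is to reduce the general case to the single-irreducible case treated in the preceding corollary, by decomposing $V$ into its primary components under $g$.

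First, by the Chinese remainder theorem applied to the factorization of the characteristic polynomial of $g$, one obtains a $g$-stable decomposition $V = V_1 \oplus \cdots \oplus V_s$, where each $V_i$ is the characteristic subspace on which $g$ has minimal polynomial a power of the irreducible associated to $W_i$, and where the dual type of the restriction $g|_{V_i}$ is precisely $T_i$. A crucial feature is that each projector $V \to V_i$ is a polynomial in $g$, hence any $g$-stable subspace $M \subseteq V$ splits as $M = \bigoplus_i (M \cap V_i)$.

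Next I would show that Jordan-H\"older sequences of $V$ are in bijection with pairs consisting of (a) a \emph{shuffle word} $(i_1, \ldots, i_\tau) \in \{1,\ldots,s\}^\tau$ in which the index $i$ appears exactly $\tau_i$ times, where $\tau = \tau_1 + \cdots + \tau_s$, and (b) a Jordan-H\"older sequence of $V_i$ for each $i$. Given a Jordan-H\"older sequence $0 = V^{(0)} \subsetneq \cdots \subsetneq V^{(\tau)} = V$, set $V_i^{(k)} = V^{(k)} \cap V_i$; the identity $V^{(k)}/V^{(k-1)} = \bigoplus_i V_i^{(k)}/V_i^{(k-1)}$ together with the simplicity of the left-hand side forces exactly one summand on the right to be non-zero, which defines the index $i_k$ recording which primary component ``grows'' at step $k$. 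Conversely, once the shuffle word is prescribed, the data of independent Jordan-H\"older sequences of the $V_i$'s reassembles into a unique Jordan-H\"older sequence of $V$.

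It remains to count. The number of shuffle words is the multinomial coefficient $\binom{\tau}{\tau_1, \ldots, \tau_s} = \frac{\tau!}{\tau_1! \cdots \tau_s!}$. For each index $i$, the number of Jordan-H\"older sequences of $V_i$ is $\sum_{\Gamma_i} w(\Gamma_i)$ by the preceding corollary, the sum running over admissible path sequences ending at the empty table attached to $T_i$. Multiplying and using the distributivity identity $\prod_i \bigl(\sum_{\Gamma_i} w(\Gamma_i)\bigr) = \sum_{(\Gamma_1,\ldots,\Gamma_s)} \prod_i w(\Gamma_i)$ then yields the announced formula.

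The main obstacle is the decoupling step, namely the claim that every intermediate subspace of a Jordan-H\"older filtration is compatible with the primary decomposition. This rests on two facts: the projectors onto the $V_i$'s are polynomials in $g$ (so they preserve every $g$-stable subspace), and the classes $W_1, \ldots, W_s$ are pairwise non-isomorphic (so no simple subquotient can have non-zero image in two different primary components simultaneously). Once these two observations are in place, the bijection between Jordan-H\"older sequences of $V$ and the pairs (shuffle, tuple of sequences in the $V_i$) is completely formal.
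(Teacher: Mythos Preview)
Your proof is correct and follows essentially the same approach as the paper: both use the primary decomposition of $V$ to split a Jordan--H\"older sequence into a shuffle word (counted by the multinomial coefficient) together with independent Jordan--H\"older sequences on each primary component (each counted by the single-irreducible formula). The paper phrases the decoupling at the level of admissible path sequences---extracting the ``$W_l$-part'' of a global path sequence and reassembling via anagrams---rather than at the level of filtrations, but the underlying content is identical.
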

\begin{proof}
From a chain of admissible paths ending at $((W_1, \emptyset), \ldots, (W_s, \emptyset))$, it is possible to extract its $W_l$-part $\Gamma_l$ for all $1 \leq l \leq s$. By definition, it is the sequence of all the paths involving a change in the table associated to $W_l$. Such a chain is a sequence of admissible paths from $T_l$ ending at the empty table. It is clear that the weight of the path sequence is the product of the weights of the $\Gamma_l$'s. Therefore, it does not depend on the way the $\Gamma_l$'s were combined together. The admissible path sequences that end at $((W_1, \emptyset), \ldots, (W_s, \emptyset))$ are all the different ways to recombine admissible path sequences from all the $(W_i,T_i)$ to the empty table. The weight of such a sequence is the product of the weights of the $W_l$-parts. There are as many recombinations as anagrams of a word that includes $\tau_l$ times the letter $W_l$ for all $1 \leq l \leq s$, $\tau_l$ being the sum of the integers appearing in $T_l$. The result then follows directly from the previous discussion an the fact that the number of anagrams of a word that includes $\tau_l$ times the letter $W_l$ is the multinomial coefficient $\frac{(\tau_1 + \cdots + \tau_s)!}{\tau_1!\ldots \tau_s!} $.
\end{proof}
\begin{ex}
Assume $g$ has dual type $((W_1, (a_1)) , \ldots, (W_s, (a_s)))$. It is easy to see that the only admissible path sequence for $(W_l,(t_l))$ has weight 1. Hence the number of Jordan-H{\"o}lder sequences of $g$ is $\frac{(a_1 + \cdots + a_s)!}{a_1!\ldots a_s!}$. This generalizes remark \ref{indecomposable}
\end{ex}
\begin{cor}\label{countfac}
Let $P \in k[X,\sigma]$ monic {\'e}tale. Let $((W_1,T_1), \ldots, (W_s,T_s))$ be the type of $P$. Then the number of factorizations of $P$ as a product of monic irreducible polynomials is
$$\frac{(\tau_1 + \cdots + \tau_s)!}{\tau_1!\cdots \tau_s!} \prod_{(\Gamma_1, \ldots, \Gamma_s)} w(\Gamma_1)\cdots w(\Gamma_s),$$
the product being taken over all the $s$-uples $(\Gamma_1, \ldots, \Gamma_s)$ of admissible path sequences ending at the empty tables.
\end{cor}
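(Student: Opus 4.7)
The plan is to reduce Corollary \ref{countfac} to the preceding proposition by unpacking the bijection between factorizations of $P$ into monic irreducibles and Jordan-H\"older sequences of an endomorphism on a finite dimensional $k^\sigma$-vector space, and then checking that the invariants (type, dual type, admissible paths, weights) match exactly.

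First I would recall the three bijections already established earlier in the section. By definition, a factorization $P = P_1 \cdots P_n$ into monic irreducibles corresponds to a Jordan-H\"older filtration of $D_P$ viewed as a $k[X,\sigma]$-module (the successive quotients are the $\varphi$-modules $D_{P_i}$). By Corollary \ref{similarity} and Proposition \ref{calculrep}, the category of $\varphi$-modules is equivalent to the category of $k^\sigma[X^r]$-modules with action via $\varphi^r$; in particular, the Jordan-H\"older filtrations of $D_P$ as a $\varphi$-module are in bijection with Jordan-H\"older filtrations of $(D_P,\varphi^r)$ as a $k^\sigma$-vector space with endomorphism. This reduction is crucial because the preceding proposition is formulated precisely in this latter language.

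Next I would verify that the combinatorial data associated to $P$ in the corollary coincide, under this dictionary, with the data associated to the endomorphism $g = \varphi^r$ acting on $D_P$ in the proposition. This is essentially by construction: the type $((W_1,T_1),\ldots,(W_s,T_s))$ of $P$ was defined (in the definition following Remark \ref{dualdiagram}) as the decomposition of $\varphi^r$ according to the characteristic subspaces attached to the irreducible factors $N_1,\ldots,N_s$ of $\mathcal{N}(P)$, together with the dual of the sizes of the Jordan blocks of $\varphi^r$ on each component. So the $(W_l,T_l)$ attached to $P$ is literally the same datum as the one the proposition attaches to $g = \varphi^r$ on $V = D_P$; similarly, the admissible paths and weights $w(\Gamma_l)$ depend only on this combinatorial datum.

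With these two identifications in place, the corollary follows by applying the preceding proposition with $V = D_P$ and $g = \varphi^r$: the number of Jordan-H\"older sequences of $g$ equals the number of Jordan-H\"older filtrations of $D_P$ as a $\varphi$-module, which equals the number of factorizations of $P$ as a product of monic irreducibles. The only subtle point, and the one I would take care to spell out, is that $P$ being \emph{\'etale} is what ensures $D_P$ is actually a $\varphi$-module in the \'etale sense, so that $\varphi^r$ is invertible and the dictionary with $k^\sigma$-representations (hence with endomorphisms as treated by the proposition) is valid; beyond this verification the argument is essentially a translation.
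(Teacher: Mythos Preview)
Your proposal is correct and follows exactly the approach the paper intends: the corollary is stated without its own proof because it is an immediate consequence of the preceding proposition applied to $g=\varphi^r$ on $V=D_P$, together with the bijection between factorizations of $P$ into monic irreducibles and Jordan--H\"older sequences of $(D_P,\varphi^r)$ that was recalled at the beginning of \S1.4. Your write-up simply makes this translation explicit, including the role of the \'etale hypothesis, and there is nothing to add.
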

In the next section, we describe an algorithm for counting the number of factorizations of a skew polynomial relying on this theory.
\begin{rmq}
If $\n(P)$ is a power of an irreducible commutative polynomial $N$, say $\n(P) = N^a$, then the type of $P$ can be determined as follows: let $P_1 = P$, and for $i \geq 1$, define $Q_1 = \text{rgcd}(P_i,N)$ and $P_{i} = P_{i+1}Q_i$. Let $m$ be minimal such that $Q_{m+1} = 1$. Then for all $1 \leq i \leq m$ , $\n(Q_i) = N^{e_i}$ for some integer $1 \leq e_i \leq  r$, and the type of $P$ is $(e_1, \ldots, e_m)$. The type can also be determined by looking at the degrees of the successive rgcd's of $P$ with $N, N^2, N^3, \ldots$
\end{rmq}

\section{Computational aspects}

This section deals with several computational aspects of skew polynomial rings. In the first part, we describe algorithms for arithmetics in these rings: multiplication, Euclidean division, gcd's and lcm's, and we give their complexities. Then, we give algorithms to compute the reduced norm of a skew polynomial as defined in the theoretical part. We use these algorithms and some other theoretical results to give a fast factorization algorithm. We give a detailed computation of the complexity of this algorithm. Finally, we describe algorithms for factorization-counting and random factorizations.

Throughout this section, we will use the following notations:
\begin{itemize}
\item $\MM(n)$ is the number of operations (in $k^\sigma$) needed to compute the product of two $n \times n$ matrices with coefficients in $k^\sigma$.
\item $\SM(n,r)$ is the number of operations (in $k^\sigma$) needed to multiply two skew polynomials with coefficients in $k$ of degree at most $n$.
\end{itemize}

We recall that we have proved in \S \ref{fastmult} that one can take 
$\SM(n,r) = \tilde O(n r^2)$. Regarding matrix multiplication, the naive 
algorithm gives $\MM(n) = O(n^3)$ but it is well known that this 
complexity can be improved. For instance, using Strassen's algorithm, 
one have $\MM(n) = O(n^{\log_2 7})$. Today, the best known asymptotic 
complexity for matrix multiplication is due to Vassilevska Williams 
\cite{vw} and is about $O(n^{2.3727})$.

We use the common $\tilde O$ notation: if $f$ and $g$ are two real functions defined on the integers, we say that $f(n) = \tilde O(g(n))$ if there is some integer $m$ such that $f(n) = O(g(n)\log^m(n))$.

We also assume that all usual arithmetics with polynomials can be done 
in quasilinear time. In particular, we assume that all usual operations 
(basically addition, multiplication and inverse) in an extension of 
$k^\sigma$ of degree $d$ requires $\tilde O(d)$ operations in 
$k^\sigma$. We refer to \cite{mca} for a presentation of algorithms having 
these complexities. Regarding the Frobenius morphism on $k$, we assume 
that all the conjugates of an element $a \in k$ can be computed in 
$O(r^2)$ operations in $k^\sigma$.

\subsection{Fast arithmetics in skew polynomial rings}
This section is dedicated to basic algorithms for arithmetics in skew polynomial rings.
\subsubsection{Multiplication}\label{fastmult}
Let $A,B \in k[X,\sigma]$, both of degree $\leq d$. We give several algorithms to compute the product $AB$ and we compare their complexities.

\paragraph{The classical algorithm}

Let us recall that the classical algorithm of \cite{gie2}, Lemma 1.1 
(which throughout this section will be referred to as ``Giesbrecht's 
algorithm'') has complexity $\tilde{O}(d^2r + dr^2)$.
This algorithm uses the explicit formula for the coefficients of the 
product of two skew polynomials: if $A = \sum_{i=0}^{d_1} a_i X^i$ and 
$B = \sum_{i=0}^{d_2} b_i X^j$, then their product is $\sum_{i=0}^{d_1 + 
d_2} \left(\sum_{j=0}^i a_j\sigma^j(b_{i-j}) \right)X^i$. For each 
coefficient $b_i$ of $B$, the list of the images of $b_i$ under all the 
powers of $\sigma$ can be computed in $O(r^2)$ operations in $k^\sigma$. 
Hence, all the $\sigma^j(b_{i-j})$ that may appear in the above formula 
can be computed in $\tilde{O}(d_2r^2)$. Once we have these coefficients, 
it remains to compute the product, which is done with $O(d_1 d_2)$ 
operations in $k$, so the total complexity is $\tilde{O}(d_2r^2 + d_1 
d_2 r)$. To write it more simply, if both polynomials have degree less 
than $d$, then their product can be computed in $\tilde{O}(d^2r + dr^2)$ 
operations in $k^\sigma$.

\paragraph{Reduction to the commutative case}
Here, we use fast multiplication for commutative polynomials to multiply skew polynomials. Write $A = \sum_{i=0}^{r-1} A_i X^i$, with each $A_i$ in $k[X^r]$. For $0 \leq i \leq r-1$, we denote by $B^{(i)}$ the skew polynomial deduced from $B$ by applying $\sigma^i$ to all coefficients. Then we have:
$$ AB = \sum_{i=0}^{r-1} A_iB^{(i)}X^i.$$
Since $A_i \in k[X^r]$, it is easy to see that the product $A_iB^{(i)}$ is the same as the product of these polynomials computed in $k[X]$. The algorithm is the following:
\begin{enumerate}
\item Compute the $B^{(i)}$.
\item Compute all the products $A_iB^{(i)}$.
\item Compute the sum $AB =  \sum_{i=0}^{r-1} A_iB^{(i)}X^i$.
\end{enumerate}
\begin{lem}
The number of operations needed in $k^\sigma$ for the multiplication of two skew polynomials of degree at most $d$ by the above algorithm is $\tilde{O}(dr^2)$.
\end{lem}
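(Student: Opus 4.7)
The plan is to analyze the cost of each of the three steps of the algorithm separately and show that each fits within the $\tilde O(dr^2)$ budget, with the dominant contribution coming from Steps 1 and 2.

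\emph{Step 1 (computing the conjugates).} The polynomial $B$ has at most $d+1$ coefficients in $k$. For each such coefficient $b_j$, the paper's assumption on the Frobenius says that the full list $\bigl(b_j, \sigma(b_j), \ldots, \sigma^{r-1}(b_j)\bigr)$ can be obtained in $O(r^2)$ operations in $k^\sigma$. Since assembling $B^{(i)}$ only amounts to reading off the $i$-th entry of each such list, the total cost of producing all the $B^{(i)}$ is $O(dr^2)$.

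\emph{Step 2 (the $r$ commutative products).} Here the key point is that, although $A_i$ lies in $k[X^r]$ and $B^{(i)}$ is arbitrary, both have degree at most $d$ in $X$, and the product $A_i B^{(i)}$ coincides with their product taken in the ordinary commutative ring $k[X]$ (since $A_i$ is central in $k[X,\sigma]$). Using the quasilinear polynomial multiplication assumed for the commutative case, each product $A_i B^{(i)}$ costs $\tilde O(d)$ operations in $k$. An operation in $k$ (an extension of $k^\sigma$ of degree $r$) costs $\tilde O(r)$ operations in $k^\sigma$, so each product costs $\tilde O(dr)$ operations in $k^\sigma$. There are $r$ such products, giving a total of $\tilde O(dr^2)$.

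\emph{Step 3 (the final sum).} Each $A_i B^{(i)} X^i$ has degree at most $2d + r$, so the sum consists of at most $O(d)$ additions per shift in $k$, and $r$ shifts, i.e.\ $O(dr)$ additions in $k$. Converted to operations in $k^\sigma$, this contributes $\tilde O(dr^2)$ and is absorbed by the previous steps.

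Adding the three contributions yields the claimed $\tilde O(dr^2)$ bound. The only genuinely delicate point is Step 2, where one must observe that multiplication by a polynomial in $k[X^r]$ in the skew ring $k[X,\sigma]$ is the same as multiplication in $k[X]$; after that observation, everything reduces to invoking fast commutative polynomial arithmetic and the quasilinear cost of arithmetic in the extension $k/k^\sigma$.
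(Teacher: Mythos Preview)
Your proof is correct and follows the same three-step analysis as the paper, with the same complexity bounds at each stage. One small correction: in Step~2 you justify the reduction to commutative multiplication by saying ``since $A_i$ is central in $k[X,\sigma]$,'' but this is false---the centre is $k^\sigma[X^r]$, and $A_i \in k[X^r]$ has coefficients in $k$, not $k^\sigma$. The correct reason (which the paper leaves implicit) is that every monomial of $A_i$ has the form $cX^{jr}$, and since $\sigma^r = \mathrm{id}$ we have $X^{jr}b = \sigma^{jr}(b)X^{jr} = bX^{jr}$ for all $b \in k$; hence left-multiplication by $A_i$ in $k[X,\sigma]$ coincides with the product in $k[X]$ even though $A_i$ itself need not commute with everything on the right.
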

\begin{proof}
We may assume that both $A$ and $B$ have degree $d$. For step $1.$, we need to compute all the conjugates of the $d$ coefficients of $B$, which can be done in $O(dr^2)$ operations in $k^\sigma$. The multiplications of step $2.$ as multiplications of elements of $k[X]$ can be done in $\tilde{O}(d)$ multiplications of elements of $k$, which corresponds to $\tilde O(dr)$ operations in $k^\sigma$. The total complexity of this step is then $\tilde{O}(dr^2)$ operations in $k^\sigma$. Finally, there are less than $2dr$ additions of elements of $k$ to do in step $3.$, which is done in $O(dr^2)$. The global complexity is therefore $\tilde{O}(dr^2)$.
\end{proof}

\begin{rmq}
This complexity is apparently better than those of Giesbrecht's 
algorithm (the term $dr^2$ has gone) but we want to note that 
Giesbrecht's algorithm can beat this ``commutative method'' if the
degree of $B$ is much more less than the degree of $A$. Indeed, in 
that case the dominant term in Giesbrecht's complexity is $d_1 d_2 r$
which can be competitive with $d_1 r^2$ if $r$ is large compared to 
$d_2$.
\end{rmq}

\bigskip

We are now going to present two variants of Karatsuba's multiplication
to the noncommutative case. Actually, it will turn out that the resulting
algorithms are asymptotically slower than the ``commutative method'';
nevertheless, we believe that they can be better in some cases and, for
this reason, we include them in this paper.

\paragraph{The plain Karatsuba method}
Let $A, B \in k[X,\sigma]$. Write $A = A_0 + X^{mr} A_1$ and $B = B_0 + X^{mr} B_1$, with $m = \lfloor \frac{\max\{\deg A,\deg B\}}{2r} \rfloor$. We can then write:
$$ AB = C_0 + X^{mr}C_1 + X^{2mr} C_2,$$
with $C_0 = A_0 B_0$, $C_1 = A_0 B_1 + A_1 B_0$ and $C_2 = A_1 B_1$, because $X^{mr}$ lies in the center of $k[X,\sigma]$. If we set $P = (A_0 + A_1)(B_0 + B_1)$, we get the fact that $C_1 = P - C_0 - C_2$. Hence, we can recover the product $AB$ doing the $3$ multiplications $C_0 = A_0B_0$, $C_2 = A_1 B_1$ and $P = (A_0 + A_1)(B_0 + B_1)$. Let $\text{MS}(d)$ be the number of multiplications needed in $k^\sigma$ to multiply two elements of $k[X,\sigma]$ of degree $\leq d$ using this method. We get:
$$ \SM(d,r) \leq 3\cdot \SM\left(\frac{d}{2}\right) \leq 3^\frac{\log (d/r)}{ \log 2} \cdot \SM(r).$$
Hence, this method allows to multiply polynomials of degree $\leq d$ in 
time $O\Big((\frac{d}{r})^{\frac{\log 3}{\log 2}} \SM(r)\Big)$ provided
that $d > r$.
Using Giesbrecht's algorithm for multiplication of skew polynomials of 
degree $<r$, we get a complexity of $O\left( d^{\frac{\log 3}{\log 
2}}r^{3 - \frac{\log 3}{\log 2}}\right)$, which is around $O 
\left(d^{1.58}r^{1.41} \right)$.

\paragraph{The Karatsuba-and-matrix method}
The previous Karatsuba method relies on the classical multiplication for polynomials of degree $<r$. Here, we propose another fast multiplication method for polynomials of degree up to $r^2/2$, that can be combined with the Karatsuba method.

Let $N \in k^\sigma[X^r]$ be the defining polynomial of the extension $k/k^\sigma$. We will denote by $t$ a root of $N$ in $k$. By Lemma \ref{matrixisomorphism}, the $\varphi$-module $k[X,\sigma]/N$ is isomorphic to $\mathcal{M}_r(k)$. Here, the isomorphism can be given explicitely. Indeed, the isomorphism of this Lemma maps $A \in k[X,\sigma]/N$ to the matrix of the right multiplication by $A$ in some basis of the $k$-vector space $k[X,\sigma]/N$. Let us choose the basis $1, X, \ldots, X^{r-1}$. If $a \in k$, the matrix of the right multiplication by $a$ is given by:
$$M_a = 
\begin{pmatrix}
a & 0 & \cdots & 0\\
0 & \sigma(a)& \ddots & \vdots\\
\vdots & \ddots & \ddots & \vdots\\
0 & \cdots & 0 & \sigma^{r-1}(a)
\end{pmatrix}.
$$
The matrix of multiplication by $X$ is:
$$
M_X = 
\begin{pmatrix}
0 & 1 & \cdots & 0\\
0 & \ddots& \ddots & 0\\
\vdots & \ddots & \ddots & 1\\
t & \cdots & 0 & 0
\end{pmatrix}.
$$
So, if $A= \sum_{i=0}^{r^2 -1} a_i X^i \in k[X,\sigma]$, the image of $A \pmod{N}$ can be computed easily by the previous isomorphism. More precisely, write $A = \sum_{i=0}^{r-1} A_i X^i$ with $A_i \in k[X^r]$ of degree $<r$. Then the matrix of $m_A$ is:
$$
M_A = 
\begin{pmatrix}
A_0(t) & \sigma(A_{r-1})(t) & \cdots & \sigma^{r-1}(A_1)(t)\\
tA_1(t) & \sigma(A_0)(t)& \cdots & \vdots\\
\vdots & \ddots & \ddots & \sigma^{r-1}(A_{r-1})(t)\\
tA_{r-1}(t) & \cdots & t\sigma^{r-2}(A_1)(t) & \sigma^{r-1}(A_{0})(t)
\end{pmatrix}.
$$

The matrix $M_A$ can be computed as follows. We first evaluate all the 
polynomials $A_i$'s at all the conjugates of $t$. Using efficient 
algorithms (see \cite{mca}, \S 10), it requires $\tilde{O}(r^3)$ 
operations in $k^\sigma$. We can then compute $\sigma^j(A_i)(t)$ by 
applying $\sigma^j$ to $A_i(\sigma^{-j}(t))$. Computing all these 
quantites requires $\tilde O(r^4)$ further operations in $k^\sigma$. 
Then to obtain $M_A$, it remains to multiply some of the previous 
coefficients by $t$, which requires at most $\tilde O(r^3)$ further 
operations in $k^\sigma$. Computing $M_A$ can then be done with 
complexity $\tilde O(r^4)$.

We can go in the other direction following the same ides. We first 
divide by $t$ all coefficients above the diagonal of $M_A$. We then 
apply $\sigma^{0}$ to the first column, $\sigma^{r-1}$ to the second 
column, $\ldots$, $\sigma$ to the last column and, finally, recover the 
$A_i$'s by interpolation. As before, the complexity is $\tilde O(r^4)$
operations in $k^\sigma$.

Once noticed these facts, the idea is quite simple: let $A,B \in 
k[X,\sigma]$ of degree $<r^2/2$. We compute the corresponding matrices 
$M_A, M_B$, then the product $M_AM_B$ and finally recover the 
coefficients of (the reduction modulo $N$) of $AB$. This whole algorithm 
can be done in $\tilde{O}(r^4)$ operations in $k^\sigma$.

Combining this with Karatsuba multiplication (but using this as soon as we hit polynomials of degree $<r^2/2$), we get:
$$\textstyle \SM(d,r) = \tilde{O}\big((\frac{d}{r^2})^\frac{\log 3}{\log 2}\cdot \SM(\frac{r^2}2,r)\big) = \tilde{O}(d^\frac{\log 3}{\log 2} r^{4 - \frac{2 \log 3}{\log 2}})$$
provided that $d > r^2$.
This is about $\tilde{O}(d^{1.58}r^{0.83})$.

\begin{rmq}
The most expensive step of the previous algorithm is the application
of the Frobenius. Hence, if we are working over a finite field where
applying Frobenius can be done efficiently, our complexity may decrease
to $O(r \cdot \MM(r))$ --- which beats the ``commutative method''. If 
we take $MM(r) = O(r^{2.3727})$, the resulting final complexity becomes 
$\tilde O(d^{1.58}r^{0.2})$.
\end{rmq}


\subsubsection{Euclidean division}
Let $A,B \in k[X,\sigma]$ with $\deg A \geq \deg B$. We want to compute the right-Euclidean division of $A$ by $B$:
$$A = QB + R,$$
with $\deg R < \deg B$.
The following algorithm is based on the Newton iteration process presented for example in \cite{mca}, \S 9.1, which uses reciprocal polynomials. Our algorithm is an almost direct adaptation of it, the only subtlety here is that the map sending a skew polynomial to its reciprocal polynomial is not a morphism. 
\begin{lem}
For $n\geq 0$, we denote by $k[X,\sigma]_{\leq n}$ the subspace of skew polynomials of degree at most $n$. Let
$$ \begin{array}{ccccc}
\tau_n &:& k[X,\sigma]_{\leq n}& \rightarrow &k[X,\sigma^{-1}]_{\leq n}\\
&& \displaystyle \sum_{i=0}^{n} a_i X^i & \mapsto & \displaystyle \sum_{i=0}^n a_{n-i} X^i
\end{array}.
$$
Then $\tau_n$ is $k$-linear, bijective, and for all $P,Q \in k[X,\sigma]$, with $\deg P \leq n$ and $\deg Q \leq m$, we have:
$$ \tau_{n}(P) \tau_{m}(Q^{(n)}) = \tau_{m+n}(PQ).$$
\end{lem}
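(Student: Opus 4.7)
The plan is to handle $k$-linearity and bijectivity as essentially formal, then check the multiplicative identity by a careful but direct coefficient computation where the indices are reindexed by $i'=n-i$ and $j'=m-j$. The crucial point to keep track of is the twist: $k[X,\sigma^{-1}]$ has commutation $Xa = \sigma^{-1}(a)X$, and the shift $Q^{(n)}$ on the right-hand side is exactly what compensates the mismatch between $\sigma$ and $\sigma^{-1}$.

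For $k$-linearity, I would simply observe that scalar multiplication on a skew polynomial acts on coefficients without involving $X$, so $\tau_n(cP+P') = c\tau_n(P)+\tau_n(P')$ for $c\in k$. For bijectivity, $\tau_n$ sends the basis $(1,X,\ldots,X^n)$ of $k[X,\sigma]_{\leq n}$ as a $k$-vector space to the basis $(X^n,\ldots,X,1)$ of $k[X,\sigma^{-1}]_{\leq n}$, so it is a $k$-linear isomorphism.

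For the product formula, I would write $P = \sum_{i=0}^n a_i X^i$ and $Q=\sum_{j=0}^m b_j X^j$, compute $PQ$ in $k[X,\sigma]$ using $X^i a = \sigma^i(a) X^i$ to get
$$PQ \;=\; \sum_{l=0}^{m+n} \Bigl(\sum_{i+j=l} a_i\sigma^i(b_j)\Bigr) X^l,$$
so that the coefficient of $X^l$ in $\tau_{m+n}(PQ)$ is $\sum_{i+j=m+n-l} a_i\sigma^i(b_j)$. Then, on the other side, using $X^i a = \sigma^{-i}(a) X^i$ in $k[X,\sigma^{-1}]$, I would expand
$$\tau_n(P)\,\tau_m(Q^{(n)}) \;=\; \sum_{i,j} a_{n-i}\,\sigma^{-i}\!\bigl(\sigma^n(b_{m-j})\bigr)\,X^{i+j} \;=\; \sum_{i,j} a_{n-i}\,\sigma^{n-i}(b_{m-j})\,X^{i+j}.$$
Substituting $i' = n-i$, $j' = m-j$ converts the right-hand side to $\sum_{i',j'} a_{i'}\sigma^{i'}(b_{j'})\,X^{m+n-(i'+j')}$, and collecting by the power $l$ of $X$ reproduces exactly the coefficients of $\tau_{m+n}(PQ)$ found above.

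There is no real obstacle: the main subtlety, and the only thing one needs to be careful about, is the interaction between the reversed variable and the twist. Specifically, one must check that replacing $\sigma$ by $\sigma^{-1}$ in the target ring, together with the $\sigma^n$ applied to the coefficients of $Q$, combines to give $\sigma^{n-i}$ acting on $b_{m-j}$, which is precisely what is needed so that after reindexing the exponent of $\sigma$ matches $i'$ (the same exponent appearing in the expansion of $PQ$). Once this is visible on paper the identity follows immediately.
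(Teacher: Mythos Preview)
Your proof is correct and follows essentially the same route as the paper: both treat $k$-linearity and bijectivity as immediate, then compare coefficients of $X^l$ on each side via the reindexing $i'=n-i$, $j'=m-j$, noting that $\sigma^{-i}\circ\sigma^{n}=\sigma^{n-i}$ is precisely the twist needed. Your write-up is in fact slightly more explicit about why the $\sigma^{-1}$-twist in the target ring and the shift $Q^{(n)}$ cancel correctly, but the argument is the same.
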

\begin{proof} The $k$-linearity is trivial, as well as bijectivity. Let $P = \sum_{i=0}^n a_i X^i$ and $Q = \sum_{j=0}^m b_jX^j$. Then the coefficient of $X^l$ in the product $PQ$ is
$$ c_{l} = \sum_{i+j = l} a_i\sigma^i(b_j).$$
Hence, the coefficient of $X^l$ in $\tau_{m+n}(PQ)$ is $c_{n+m -l} = \sum_{i+ j = l} a_{n-i} \sigma^{n-i}(b_{m-j})$. This is clearly the coefficient of $X^l$ in the product $\tau_n(P) \tau_m(Q^{(n)})$, computed in $k[X,\sigma^{-1}]$.
\end{proof}
Let us now describe the Euclidean division algorithm. Let $n = \deg A$ and $m = \deg B$. According to the previous formula, if $A = QB + R$ is the right-Euclidean division of $A$ by $B$, we have:
$$ \tau_{n}(A) = \tau_{n-m}(Q)\tau_m(B^{(n-m)}) + \tau_n(R).$$
Since $\deg R < m$, $\tau_n(R)$ is divisible by $X^{n-m+1}$. The idea is to compute an approximation of the left-inverse of $\tilde{B} = \tau_m(B^{(n-m)})$ in $k[\![ X, \sigma^{-1}]\!]$ (the ring of skew power series, which is defined in the obvious way, and is only used here to sketch the idea of the algorithm). Once we get such an approximation $\tilde{Q}$, truncated at precision $X^{n-m}$, we know that $\tau_n(A)\tilde{Q}\tilde{B} - \tau_n(A) \in  X^{n-m}k[ X, \sigma^{-1}]$, and by applying $\tau_{n}^{-1}$, we get the quotient $Q$.

Computing successive approximations of $\tilde {Q}$ is done by Newton iteration: let $B_0$ be the constant coefficient of $\tilde{B}$, we define $\tilde{Q}_0 = B_0^{-1}$, and $\tilde{Q}_{i+1} = 2\tilde{Q}_i -\tilde{ Q}_i \tilde{B}\tilde{Q}_i$, truncated at $X^{2^i}$.
\begin{lem}
For all $i \geq 0$, $\tilde{Q}_i\tilde{B} -1 \in X^{2^i}k[X,\sigma^{-1}]$.
\end{lem}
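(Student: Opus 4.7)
The plan is to argue by induction on $i$, mimicking the classical Newton iteration proof. The key observation that lets the commutative argument go through in our noncommutative ring is that $X^n \cdot k[X,\sigma^{-1}] = k[X,\sigma^{-1}] \cdot X^n$ as a set, because the relation $X a = \sigma^{-1}(a) X$ implies $a X^n = X^n \sigma^n(a)$; hence $X^n k[X,\sigma^{-1}]$ is a two-sided ideal for every $n \geq 0$.

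For the base case $i=0$, write $\tilde B = B_0 + X \tilde B'$ with $\tilde B' \in k[X,\sigma^{-1}]$. Then
$$\tilde Q_0 \tilde B = B_0^{-1} B_0 + B_0^{-1} X \tilde B' = 1 + B_0^{-1} X \tilde B',$$
and $B_0^{-1} X \tilde B' \in X k[X,\sigma^{-1}]$ using that $X k[X,\sigma^{-1}]$ is two-sided. For the inductive step, set $u_i = \tilde Q_i \tilde B - 1$ and assume $u_i \in X^{2^i} k[X,\sigma^{-1}]$. A direct expansion gives
$$\tilde Q_{i+1} \tilde B - 1 \;=\; (2 \tilde Q_i - \tilde Q_i \tilde B \tilde Q_i)\, \tilde B - 1 \;=\; 2(1+u_i) - (1 + u_i)^2 - 1 \;=\; -u_i^2.$$
By the two-sided ideal property applied twice, $u_i^2 \in X^{2^i} k[X,\sigma^{-1}] \cdot X^{2^i} k[X,\sigma^{-1}] \subseteq X^{2^{i+1}} k[X,\sigma^{-1}]$, which is exactly what we want.

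Finally, I would handle the truncation step carefully: $\tilde Q_{i+1}$ is defined as $2\tilde Q_i - \tilde Q_i \tilde B \tilde Q_i$ truncated modulo $X^{2^{i+1}}$, so $\tilde Q_{i+1} - (2\tilde Q_i - \tilde Q_i \tilde B \tilde Q_i) \in X^{2^{i+1}} k[X,\sigma^{-1}]$; multiplying by $\tilde B$ on the right and using once more that $X^{2^{i+1}} k[X,\sigma^{-1}]$ is two-sided shows that truncation only modifies $\tilde Q_{i+1} \tilde B$ by an element of $X^{2^{i+1}} k[X,\sigma^{-1}]$, which is absorbed in the conclusion. The only real subtlety, and what I would view as the main (minor) obstacle, is precisely this bookkeeping around noncommutativity: one has to verify that $X^n$ really does generate a two-sided ideal so that both the identity $\tilde Q_{i+1}\tilde B - 1 = -u_i^2$ and the bound on $u_i^2$ remain valid; once that observation is in hand, the argument is a straightforward transcription of the commutative Newton iteration.
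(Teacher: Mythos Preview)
Your proof is correct and follows essentially the same route as the paper: induction on $i$, with the key identity $\tilde Q_{i+1}\tilde B - 1 = -(1-\tilde Q_i\tilde B)^2$. You are actually more careful than the paper in two respects: you spell out why $X^n k[X,\sigma^{-1}]$ is a two-sided ideal (needed for the product $u_i^2$ to land in $X^{2^{i+1}}k[X,\sigma^{-1}]$), and you address the truncation, both of which the paper silently passes over.
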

\begin{proof}
The proof goes by induction on $i$. By construction, $\tilde{Q}_0 \tilde{B} - 1 \in Xk[X,\sigma^{-1}]$. Now assuming that the result is true for some $i\geq 0$, we have:
$$\tilde{Q}_{i+1}\tilde{B} -1 = 2 \tilde{Q}_{i} \tilde{B} - \tilde{Q_i}\tilde{B}\tilde{Q}_i\tilde{B} -1 = -(1 - \tilde{Q}_i\tilde{B})^2 \in X^{2^{i+1}}k[X,\sigma^{-1}]$$
and we are done.
\end{proof}

\begin{algorithm}
\KwIn{$A,B \in k[X,\sigma]$ with $\deg A \geq \deg B$}
\KwOut{$Q,R \in k[X,\sigma]$ with $\deg R < \deg B$ such that $A = QB +R$}
$n=\deg A$;\, $m=\deg B$\;
$\tilde{B} = \tau_{m}(B^{(n)})$\;
$\tilde{Q} = \text{Coefficient}(\tilde B, 0)^{-1}$\;
$i=1$\;
\While{$i < n-m+1$}{
$\tilde{Q}= 2\tilde{Q} - \tilde{Q}(\tilde{B} \pmod {X^{i}})\tilde{Q} \pmod{X^{2i}$}\;
$i=2i$\;
}
$\tilde{Q} = (\tau_n(A) \pmod{X^{n-m}}) \tilde{Q} \pmod{X^{n-m}}$\;
$Q=\tau_{n-m}^{-1}(\tilde Q)$\;
$R = A - QB$\;
\Return{Q,R}\;
\caption{REuclideanDivision}
\end{algorithm}

\begin{prop}
The algorithm \verb"REuclideanDivision" returns the quotient and remainder of the right-division of $A$ of degree $n$ by $B$ of degree $m$ in $\tilde{O}(\SM(n,r))$ operations in $k^\sigma$.
\end{prop}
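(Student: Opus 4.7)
The plan has two parts: correctness and complexity.

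For correctness, I would first note that $\tilde B = \tau_m(B^{(n-m)})$ has nonzero constant coefficient (namely the leading coefficient of $B$, up to applying $\sigma^{n-m}$), so $\tilde Q_0$ is well-defined. The Newton iteration lemma just proved shows by induction that after $i = \lceil \log_2(n-m+1)\rceil$ doubling steps, one has $\tilde Q \tilde B \equiv 1 \pmod{X^{n-m+1}}$ in $k[X,\sigma^{-1}]$. Multiplying by $\tau_n(A)$ on the left and truncating modulo $X^{n-m+1}$ therefore gives the unique element $\tilde Q'$ of degree $\leq n-m$ such that $\tau_n(A) \equiv \tilde Q' \tilde B \pmod{X^{n-m+1}}$. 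Applying $\tau_{n-m}^{-1}$ and using the lemma relating $\tau$ to multiplication, $\tau_{n-m}^{-1}(\tilde Q') \cdot B$ agrees with $A$ in the top $n-m+1$ coefficients, so $R = A - QB$ has degree $< m$, and $(Q,R)$ is the unique pair produced by right-Euclidean division.

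For complexity, the algorithm performs $\lceil \log_2(n-m+1) \rceil$ Newton steps. At the step when the working precision is $X^{2i}$, the cost is dominated by two skew polynomial multiplications of operands of degree $<2i$ truncated to degree $<2i$, i.e.\ $O(\SM(2i,r))$ operations in $k^\sigma$. Under the mild standard assumption $\SM(2i,r) \leq 2\,\SM(i,r)$ (satisfied by all the multiplication algorithms considered earlier), the geometric sum telescopes and the total cost of the loop is $O(\SM(n-m,r)) = O(\SM(n,r))$. The final multiplication $(\tau_n(A) \bmod X^{n-m})\tilde Q$ truncated at $X^{n-m}$ also costs $O(\SM(n,r))$, and the last step $A - QB$ is one further multiplication of skew polynomials of degrees $n-m$ and $m$, again $O(\SM(n,r))$. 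All the applications of $\tau_{*}$, the initial inversion of a scalar in $k$, and the twist $B^{(n)}$ are negligible (linear in $n$ times $O(r^2)$ for the conjugates, which is absorbed into $\tilde O(\SM(n,r))$).

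The mild subtlety I would be most careful about is the order of factors: since we are working with \emph{right}-Euclidean division and the map $\tau$ involves a twist ($\tau_n(P)\tau_m(Q^{(n)}) = \tau_{n+m}(PQ)$), one must be sure that the Newton iteration is written on the correct side — namely that $\tilde Q$ is a left-inverse of $\tilde B$ in $k[\![X,\sigma^{-1}]\!]$ — and that the truncations commute with the relevant multiplications. This is exactly what the preceding lemma guarantees, and the identity $\tilde Q_{i+1}\tilde B - 1 = -(1-\tilde Q_i\tilde B)^2$ used in the lemma's proof is consistent with this choice of side. Everything else is a routine transcription of the commutative Newton-iteration division algorithm of \cite{mca}, \S 9.1.
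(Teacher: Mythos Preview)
Your proof is correct and follows essentially the same route as the paper: correctness via the Newton-iteration lemma and the $\tau$-compatibility formula, then summing the costs of the $O(\log(n-m))$ doubling steps and the final product $QB$. One small slip: the ``standard assumption'' you invoke, $\SM(2i,r)\le 2\,\SM(i,r)$, is the wrong inequality for making the geometric sum collapse (and it fails for the Karatsuba variants); what you actually need is either the superlinearity $2\,\SM(i,r)\le \SM(2i,r)$ or simply the trivial bound that the loop has $O(\log n)$ steps each costing at most $\SM(n,r)$, which already gives $\tilde O(\SM(n,r))$ --- this is exactly how the paper handles it.
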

\begin{proof}
We have already seen that the result of this algorithm is correct. In order to compute $\tilde{B}$, $O(mr^2)$ operations are needed. The \verb"while" loop in the algorithm has $\log_2(n-m+1)$ steps, and at the $i$-th step, we compute the product of skew polynomials of degree $2^i$, so the global complexity of this is $\sum_{i=0}^{\log_2(n-m+1)} \SM(2^i, r) = \tilde{O}(\SM(n-m,r))$. Computing $(\tau_n(A) \pmod{X^{n-m}}) \tilde{Q}$ has the same complexity. Finally, we compute the product $QB$ in $\SM(\max{m, n-m},r)$ operations, and $R = A- QB$ in $\tilde{O}(\SM(n,r))$ operations.
\end{proof}
\subsubsection{Greatest common divisors and lowest common multiples} 
This section describes an algorithm adapted directly from Algorithm 11.4 of \cite{mca}, to compute the right-gcd $R$ of two skew polynomials $A$ and $B$, together with skew polynomials $U,V$ such that $UA + VB = R$. As we have seen before, this also gives almost directly the left-lcm of $A$ and $B$. 

This algorithm relies on the fact that in the Euclidean division, the highest-degree terms of the quotient only depend on the highest-degree terms of the dividend and divisor. If $A \in k[X,\sigma]$ and $n \in \N$, with $A = \sum_{i=0}^{d}a_i X^i$ of degree $d$, we set $A_{(n)} = \sum_{i=0}^n a_{d-i}X^{n-i}$, with the convention that $a_j = 0$ for $j \notin \{0, \cdots, d\}$. Then, for $n\geq 0$, $A_{(n)}$ is a skew polynomial of degree $n$, and for $n<0$. Note that for all $i \geq 0$, $(AX^i)_{(n)} = A_{(n)}$.

\begin{defi}
If $A,B, A^*, B^* \in k[X,\sigma]$ with $\deg A \geq \deg B$ and $\deg A^* \geq \deg B^*$, and $n\in \Z$, we say that $(A,B)$ and $(A^*,B^*)$ \emph{coincide up to $n$} if
\begin{enumerate}
\item $A_{(n)} = A^*_{(n)}$,
\item $B_{(n -(\deg P - \deg Q))} = B^*_{(n - (\deg P^* - \deg Q^* ))}$
\end{enumerate}
\end{defi}
Then we have the following:
\begin{lem}[\cite{mca}, Lemma 11.1.]
Let $n \in \Z$, $(A,B)$ and $(A^*,B^*) \in (k[X,\sigma]\setminus \{0\})^2$ that coincide up to $2n$, with $n \geq \deg A - \deg B \geq 0$. Define $Q, R, Q^*, R^*$ as the quotient and remainder in the right-divisions:
$$\begin{array}{cccl}
A &=& QB + R,&\text{with }\deg R < \deg B,\\
A^* & =& Q^*B^* + R^*&\text{with }\deg R^* < \deg B^*.
\end{array}$$
Then $Q = Q^*$, and either $(B,R)$ and $(B,R^*)$ coincide up to $2(n - \deg Q)$ or $R = 0$ or $n - \deg Q < \deg B - \deg R$.
\end{lem}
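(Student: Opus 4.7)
The plan is to adapt the Newton-iteration divisibility analysis used in the commutative version \cite{mca}, carefully tracking the $\sigma$-twist. First I would observe that the coincidence up to $2n$ forces the leading coefficients of $A$ and $A^*$ to match (both equal the coefficient of $X^{2n}$ in $A_{(2n)} = A^*_{(2n)}$) and similarly for $B, B^*$; so one may assume $\deg A = \deg A^*$ and $\deg B = \deg B^*$, whence $\deg Q = \deg Q^*$. Write $d, m, k = d - m$ for these common degrees and degree-difference.

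To prove $Q = Q^*$, I would apply the reciprocal-polynomial identity from the previous subsection to $A = QB + R$, yielding
$$\tau_d(A) = \tau_k(Q)\,\tau_m(B^{(k)}) + \tau_d(R) \quad \text{in } k[X,\sigma^{-1}].$$
Because $\deg R < m$, the polynomial $\tau_d(R)$ has zero coefficients in degrees $0, \ldots, k$; this subspace is the two-sided ideal $X^{k+1} k[X,\sigma^{-1}]$ (using the identity $X^{k+1} a = \sigma^{-(k+1)}(a) X^{k+1}$). Reducing modulo $X^{k+1}$ gives
$$\tau_d(A) \equiv \tau_k(Q)\,\tau_m(B^{(k)}) \pmod{X^{k+1}}.$$
The constant coefficient $\sigma^k(b_m)$ of $\tau_m(B^{(k)})$ is invertible, so this congruence uniquely determines $\tau_k(Q)$ from the classes of $\tau_d(A)$ and $\tau_m(B^{(k)})$ modulo $X^{k+1}$. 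These classes only involve the top $k+1$ coefficients of $A$ and $B$, and since $n \geq k$, both are fixed by the coincidence hypothesis. The same argument applied to $A^* = Q^* B^* + R^*$ gives the same value for $\tau_k(Q^*)$, so $Q = Q^*$.

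For the second conclusion, assume $R \neq 0$ and $n - k \geq m - \deg R$. The required $B_{(2(n-k))} = B^*_{(2(n-k))}$ is immediate from $B_{(2n-k)} = B^*_{(2n-k)}$ by truncation, using $2n - k \geq 2(n-k)$. For the analogous condition on $R$, I would use $R = A - QB$ (with the same $Q$ for the starred version) and write the coefficient of $X^\ell$ in $R$ as $a_\ell - \sum_{i+j=\ell} q_i \sigma^i(b_j)$. For $\ell$ in the range $m - 2(n-k) \leq \ell \leq \deg R$, this expression only involves coefficients of $A$ controlled by $A_{(2n)} = A^*_{(2n)}$ and coefficients of $B$ controlled by $B_{(2n-k)} = B^*_{(2n-k)}$; the same coefficients occur (with the same $\sigma$-twists) for $R^*$, giving the desired equality and, in particular, forcing $\deg R = \deg R^*$ so that the two sides $R_{(2(n-k) - (m-\deg R))}$ and $R^*_{(2(n-k) - (m-\deg R^*))}$ make sense as equal polynomials.

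The main difficulty in making this precise is the bookkeeping of the Frobenius twists and boundary indices: one must verify that every index $j$ appearing in the expansion of $R$ on the relevant range lies in the ``top'' portion of $B$ controlled by the coincidence hypothesis, and that the Frobenius powers $\sigma^i$ appearing there match those that will appear for $R^*$. This is routine, since $\sigma$ acts coefficientwise and preserves degrees, but it is the only substantive point where the commutative proof needs adaptation; beyond it no new idea is required.
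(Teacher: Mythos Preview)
The paper does not actually prove this lemma: it is stated with the citation \cite{mca}, Lemma 11.1, and no proof is given in the text. Your argument is correct and is essentially the commutative proof from \cite{mca} carried over with the $\sigma$-twists tracked; your use of the reciprocal-polynomial identity $\tau_d(A) = \tau_k(Q)\,\tau_m(B^{(k)}) + \tau_d(R)$ is exactly the tool the paper itself sets up in the preceding subsection on Euclidean division, so the approach fits the paper's framework well.

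One small point worth making explicit: your reduction ``one may assume $\deg A = \deg A^*$ and $\deg B = \deg B^*$'' is justified by the invariance $(AX^i)_{(n)} = A_{(n)}$ noted in the paper just before the definition of coincidence, together with the observation that right-multiplying both $A^*$ and $B^*$ by the same power of $X$ leaves $Q^*$ unchanged and only shifts $R^*$; you should say this in one line rather than leave it implicit. Your index bookkeeping for the second claim (that $a_\ell$ and each $b_j$ appearing in $r_\ell = a_\ell - \sum_{i+j=\ell} q_i\,\sigma^i(b_j)$ lie in the ranges controlled by $A_{(2n)}$ and $B_{(2n-k)}$ when $\ell \geq m - 2(n-k)$) checks out.
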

Now, we want to carry this approximation further down the sequence of quotients when doing the Euclidean algorithm. For $A_0, A_1,A_0^*,A_1^* \in k[X,\sigma]$ monic, with $\deg A_0 > \deg A_1$ and $\deg A_0^* > \deg A_1^*$, we write:
$$\begin{array}{rclrcl}
A_0 &= &Q_1 A_1 + \rho_2 A_2,& A_0^* &= &Q_1^* A_1^* + \rho_2^* A_2^*,\\
&\vdots & & &\vdots &\\
A_{i-1} &= &Q_i A_i + \rho_{i+1} A_{i+1},& A_{i-1}^* &= &Q_i^* A_i^* + \rho_{i+1}^* A_{i+1}^*,\\
&\vdots & & &\vdots &\\
A_{\ell -1}& = & Q_\ell A_\ell, & A^*_{\ell^* -1}& = & Q^*_{\ell^*} A^*_{\ell^*},
\end{array}
$$
with for all $i$, $\deg A_{i+1} < \deg A_i$, with $\rho_i \in k^\times$ and $A_i$ monic.
From this sequence, we define for $1\leq i \leq \ell$, $m_i = \deg Q_i$, $n_i = \deg A_i$, and for $n \in \N$,
$$ \eta(n) = \max\left\{0 \leq j \leq \ell~|~\sum_{1 \leq i \leq j} m_i \leq n\right\}.$$
We define analogously $m_i^*, n_i^*$ and $\eta^*$. Then the following lemma quantifies how much the first results in the Euclidean algorithm only depend on the highest-power terms of the entires;
\begin{lem}[\cite{mca}, Lemma 11.3.]
Let $n \in \N$, $h = \eta(n)$ and $h^* = \eta^*(n)$. If $(A_0, A_1)$ and $(A_0^*, A_1^*)$ coincide up to $2n$, then $h = h^*$, $Q_i = Q_i^*$ and $\rho_{i+1} = \rho_{i+1}^*$ for $1 \leq i \leq h$.
\end{lem}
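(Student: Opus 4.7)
The plan is to proceed by induction on $i$, repeatedly applying the previous lemma (which controls a single step of the right-Euclidean division) in order to propagate the coincidence down the quotient sequence, one step at a time, up to index $h$.

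The invariant I would carry through the induction is the following: for $1 \le i \le h$, one has $Q_j = Q_j^*$ and $\rho_{j+1} = \rho_{j+1}^*$ for all $1 \le j \le i-1$, and the pairs $(A_{i-1}, A_i)$ and $(A_{i-1}^*, A_i^*)$ coincide up to $2\bigl(n - \sum_{j<i} m_j\bigr)$. The base case $i=1$ is just the hypothesis of the lemma. For the inductive step, the key observation is that since $i \le h$, the definition of $\eta(n)$ forces $\sum_{j\le i} m_j \le n$, whence $n - \sum_{j<i} m_j \ge m_i = \deg Q_i \ge \deg A_{i-1} - \deg A_i$. This is precisely the inequality needed to invoke the previous lemma on the pair $(A_{i-1}, A_i)$ versus $(A_{i-1}^*, A_i^*)$; it yields $Q_i = Q_i^*$. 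Writing the remainder as $\rho_{i+1} A_{i+1}$ with $A_{i+1}$ monic, equality of $Q_i$ with $Q_i^*$ together with the matching top coefficients of $A_{i-1}$ and $A_i$ forces the leading coefficient $\rho_{i+1}$ to agree with $\rho_{i+1}^*$ and the normalized $A_{i+1}$ to match $A_{i+1}^*$ to the required precision. The trichotomy of the previous lemma then gives either the desired coincidence of $(A_i, A_{i+1})$ and $(A_i^*, A_{i+1}^*)$ up to $2\bigl(n - \sum_{j\le i} m_j\bigr)$, or one of the exceptional cases, which I address in the next paragraph.

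To close the argument, I need to rule out the two exceptions of the previous lemma while $i \le h$. If the remainder vanishes, then the sequence terminates at index $i$, so both $\ell = i$ and (after we show equality of the $m_j$'s) $\ell^* = i$, and $h = h^* = i$, which is consistent with the conclusion. The third alternative, $n - \sum_{j<i} m_j < \deg A_{i-1} - \deg A_i = m_i$, would give $\sum_{j\le i} m_j > n$, contradicting $i \le h$. For the equality $h = h^*$, the invariant shows $m_i = m_i^*$ for all $i \le h$, hence $\sum_{i \le h} m_i = \sum_{i \le h} m_i^* \le n$, forcing $h^* \ge h$; a symmetric argument applied to the starred sequence (swapping the roles in the induction) yields $h \ge h^*$.

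The main obstacle, as usual for this type of ``approximation'' lemma, is bookkeeping around the trichotomy in the previous lemma: one must show at every stage of the induction that we remain in the good alternative, using the hypothesis $i \le h$ in a precise way. A subtle point specific to the skew setting is that since $\tau_n$ is not a ring morphism and the commutation rule $X a = \sigma(a) X$ twists the coefficients, one must treat the right-Euclidean division intrinsically rather than reduce it to the commutative case; however, because the previous lemma has already been proved in this setting, no further skew-specific work is needed here and the induction goes through essentially as in the commutative treatment of \cite{mca}.
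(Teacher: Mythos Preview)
The paper does not give its own proof of this lemma: it is quoted verbatim from \cite{mca}, Lemma~11.3, and no argument is supplied. Your induction on $i$, feeding the one--step Lemma~11.1 back into itself and tracking the residual precision $2\bigl(n-\sum_{j<i} m_j\bigr)$, is exactly the standard argument in \cite{mca}, so in spirit you are doing precisely what the cited reference does.

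There is, however, a genuine bookkeeping slip in your treatment of the trichotomy. In Lemma~11.1 the third alternative is $n-\deg Q < \deg B - \deg R$. At step $i$ this reads, with $n$ replaced by $n-\sum_{j<i} m_j$, $\deg Q = m_i$, $B = A_i$, $R = \rho_{i+1}A_{i+1}$,
\[
\Bigl(n-\sum_{j<i} m_j\Bigr) - m_i \;<\; n_i - n_{i+1} \;=\; m_{i+1},
\qquad\text{i.e.}\qquad \sum_{j\le i+1} m_j > n,
\]
and \emph{not} $n-\sum_{j<i} m_j < m_i$ as you wrote. Consequently this branch does not contradict $i\le h$; it only forces $i+1>h$, that is $i=h$. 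So the exceptional branch \emph{can} occur, precisely at the last step $i=h$, and your phrase ``rule out the two exceptions while $i\le h$'' is too strong. This matters because at $i=h$ you still owe the conclusion $\rho_{h+1}=\rho_{h+1}^{*}$; your earlier sentence deriving $\rho_{i+1}=\rho_{i+1}^{*}$ from ``matching top coefficients'' implicitly uses that the remainders coincide to nonnegative precision, which is exactly what the first alternative asserts and what the third alternative withholds. Once you correct the inequality, the right way to close the case $i=h$ is to observe (as in \cite{mca}) that the coincidence up to $2\bigl(n-\sum_{j<h} m_j\bigr)\ge 2m_h$ already pins down enough top coefficients of $A_{h-1}-Q_hA_h$ to identify its leading coefficient, using $n-\sum_{j\le h} m_j \ge 0$; this is the computation you should spell out rather than fold into the trichotomy.

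Apart from this slip, the structure of your argument --- the invariant, the use of $i\le h$ to guarantee the hypothesis $n'\ge \deg A_{i-1}-\deg A_i$ of Lemma~11.1, and the symmetric bound giving $h=h^{*}$ --- is correct and matches the reference.
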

Let us now describe the extended Euclidean Algorithm.

\begin{algorithm}[H]
\KwIn{$A_0, A_1 \in k[X,\sigma]$ monic, $n_0 = \deg A_0 \geq \deg A_1 = n_1$ and $n \in \N$ with $0 \leq n \leq n_0$.}
\KwOut{$M \in \mathcal{M}_2(k[X,\sigma])$ such that $M \begin{pmatrix} A_0\\ A_1\end{pmatrix} = \begin{pmatrix} A_{h}\\ A_{h+1}\end{pmatrix}$ with $h = \eta(n)$.}
\lIf{$A_1 = 0$ or $n < n_0 - n_1$}{\Return $0$, $\begin{pmatrix}1&0\\ 0&1 \end{pmatrix}$\;}
$d = \lfloor n/2\rfloor$\;
$R =$ FastExtendedRGCD$({R_0}_{(2d)}, {R_1}_{(2d - (n_0 - n_1))}, 2d, 2d-(n_0 - n_1), d)$\;
$\begin{pmatrix} A'_0 \\ A'_1 \end{pmatrix} = R\begin{pmatrix}A_0\\ A_1 \end{pmatrix}$;\, $\begin{pmatrix} n'_0\\ n'_1 \end{pmatrix} = \begin{pmatrix} \deg A'_0 \\ \deg A'_1\end{pmatrix}$\;
\lIf{$A'_1 = 0$ or $n < n_0 - n_j$}{\Return $R$\;}
$Q_j =A'_0/A'_1$;\, $\rho'_2 = \text{LeadingCoefficient}(A'_0 \mod A'_1)$\;
$A'_2 = (\rho'_2)^{-1} (A'_0 \mod A'_1)$;\, $n'_2 = \deg A'_2$\;
$d^* = n - (n_0 - n'_1)$\;
$S =$ FastExtendedRGCD$(A'_1,A'_2, 2d^*, 2d^* - (n'_1 - n'_2), d^*)$\;
$M_j = \begin{pmatrix} 0 & 1 \\ (\rho'_2)^{-1} & (\rho'_2)^{-1} Q_j\end{pmatrix}$\;
\Return{$S \cdot M_j \cdot R$\;}
\caption{FastExtendedRGCD}
\end{algorithm}
When executed for $n = n_0$, the above algorithm gives an immediate way to compute the right-gcd and left-lcm of $A_0$ and $A_1$. Indeed, in this case, we get a matrix $M = \begin{pmatrix} U_0 &U_1\\V_0 & V_1 \end{pmatrix}$ such that $U_0 A_0 + U_1 A_1  = \text{rgcd}(A_0, A_1)$, and $V_0 A_0  = -V_1 A_1 = \text{llcm}(A_0,A_1)$.

\begin{thm}[\cite{mca}, Theorem 11.5.]\label{gcd-lcm}
The algorithm \verb"FastExtendedRGCD" works correctly and uses at most $O(\SM(n,r)\log n)$ operations in $k^\sigma$ if $A_0$ has degree $d \leq 2n$. In particular, it allows to compute the rgcd and llcm with $\tilde{O}(\SM(d,r))$ operations in $k^\sigma$.
\end{thm}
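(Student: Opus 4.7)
The plan is to follow the standard analysis of the fast extended Euclidean algorithm as given in \cite{mca}, Chapter 11, with the caveat that one must check that all manipulations remain valid in the noncommutative setting. The two key inputs (the two lemmas stated just before the algorithm) have already been adapted and stated for skew polynomials, which takes care of the delicate part; what remains is a correctness argument by induction and a complexity recurrence.

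For correctness I would argue by induction on $n$. The base case $n < n_0 - n_1$ or $A_1 = 0$ returns the identity matrix, which is correct since $\eta(n) = 0$ in that case. For the inductive step, the first recursive call on $({A_0}_{(2d)}, {A_1}_{(2d-(n_0-n_1))})$ with parameter $d = \lfloor n/2 \rfloor$ produces, by the two preceding lemmas, exactly the partial quotient matrix of the first $\eta(d)$ steps of the Euclidean algorithm applied to $(A_0, A_1)$; this is where the "coincide up to $2d$" hypothesis is essential, and since $A_0$ is monic of degree $n_0 = 2d$-compatible, the hypothesis is satisfied. Multiplying out gives $(A'_0, A'_1)$, whose first few remainder steps agree with those of $(A_0, A_1)$. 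A further explicit Euclidean division provides the next quotient $Q_j$ and remainder $A'_2$, encoded in the matrix $M_j$, and then a second recursive call with parameter $d^* = n - (n_0 - n'_1)$ computes the remaining quotients up to index $\eta(n)$. Composing the three matrices yields the desired product. The crucial point is that, even though $k[X,\sigma]$ is noncommutative, the Euclidean division is right-Euclidean and the matrix identities $M\binom{A_0}{A_1} = \binom{A_h}{A_{h+1}}$ are column operations that remain compatible with right-multiplication.

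For the complexity, let $T(n)$ denote the cost of \verb"FastExtendedRGCD" on inputs of degree at most $2n$ with parameter $n$. The algorithm performs two recursive calls on inputs of degree at most $2d \leq n$ and $2d^* \leq n$ with parameters $d, d^* \leq n/2$, one right-Euclidean division on polynomials of degree $\leq n$, and a constant number of $2 \times 2$ matrix products whose entries are skew polynomials of degree $O(n)$. By the complexity of \verb"REuclideanDivision" and of \verb"SkewMultiplication", each of these non-recursive operations costs $\tilde O(\SM(n,r))$. We therefore obtain the recurrence
$$T(n) \leq 2\,T(n/2) + O(\SM(n,r) \log n),$$
which, provided $\SM$ is super-linear, solves to $T(n) = O(\SM(n,r) \log^2 n) = \tilde O(\SM(n,r))$; a slightly sharper accounting that absorbs the top-level $\log n$ into only one of the two branches gives the stated bound $O(\SM(n,r) \log n)$.

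The last statement is then immediate: applying the algorithm to $(A_0, A_1)$ with $n = n_0 = d$ computes the complete remainder sequence, and the output matrix has as its rows the cofactors of the Bezout identity $U_0 A_0 + U_1 A_1 = \mathrm{rgcd}(A_0,A_1)$ and of the relation $V_0 A_0 = -V_1 A_1 = \mathrm{llcm}(A_0,A_1)$ (the latter because the last nonzero row of the product of the elementary matrices $M_j$ annihilates $(A_0,A_1)^t$). The main obstacle in the proof is really bookkeeping: checking that the degree bounds used implicitly in the two "coincidence" lemmas are preserved through the two recursive calls; once this is granted, the complexity analysis is identical to the commutative case.
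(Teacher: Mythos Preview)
Your proposal is correct and follows essentially the same approach as the paper: correctness is reduced to the two ``coincidence'' lemmas by induction (the paper simply defers this to \cite{mca}), and the complexity is obtained from the recurrence $T(n)\le 2\,T(n/2)+O(\SM(n,r))$. One small sharpening: the non-recursive work is $O(\SM(n,r))$ rather than $O(\SM(n,r)\log n)$ (the Euclidean division costs $O(\SM(n,r))$ once $\SM$ is superlinear), so the recurrence already yields $O(\SM(n,r)\log n)$ directly, without the extra ``sharper accounting'' you invoke.
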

\begin{proof}
The proof for correctness is exactly the same as the one in \cite{mca} and relies on the previous two lemmas. Let us give more details about the complexity of the algorithm. Denote by $T(n_0,n_1,n)$ the time needed to call  \verb"FastExtendedRGCD" on two skew polynomials $A_0, A_1$ of degrees $n_0, n_1$, with parameter $n$. Set $d = \lfloor n_0/2\rfloor$. Then we have:
$$ T(n_0,n_1) \leq T(2d,2d - (n_0 - n_1),d) + T(2d^*,2d^* -(n_j - n_{j-1}),d^*) + O(\SM(d,r)).$$
The term $\SM(n_0,r)$ here comes from the multiplications needed from matrix multiplications (all the polynomials in these matrices have degree at most $n_0$) and one due to the Euclidean division algorithm. The result follows by induction from the fact that $d^* = \lceil n/2 \rceil$.
\end{proof}

\subsection{Computing the norm}
In this section, we give algorithms to compute the reduced norm of a skew polynomial. Let $N = N_{k/k^\sigma}$ be the norm from $k$ to $k^\sigma$. Let $P \in k[X,\sigma]$ of degree $d$. We give two different ways to compute the norm, depending on whether $d$ is greater or smaller than $r$. Let us start with the first case, $d < r$:
\begin{prop}[\cite{jac}, Proposition 1.7.1]
\label{prop:jac}
Let $P \in k[X,\sigma]$ of degree $d<r$, $P = \sum_{i=0}^d a_i X^i$. Then
$$ \n(P) = (-1)^{rd}N(a_0) + (-1)^{r(d-1)} N(a_1)X^{r} + \cdots + N(a_d) X^{rd}.$$
\end{prop}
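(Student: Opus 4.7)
The plan is to compute $\mathcal N(P) = \det(m_P)$ directly from the explicit matrix representation of $m_P$ established in Lemma~\ref{norm-otherdef}. Since $d < r$, the canonical decomposition $P = P_0 + XP_1 + \cdots + X^{r-1}P_{r-1}$ with $P_i \in k[X^r]$ degenerates: we have $P_i = a_i \in k$ for $0 \le i \le d$ and $P_i = 0$ otherwise. Plugging this into the matrix of Lemma~\ref{norm-otherdef} yields a matrix $M$ whose $(i,j)$-entry equals $\sigma^j(a_{i-j})$ when $0 \le i-j \le d$, equals $X^r \sigma^j(a_{r+i-j})$ when $i<j$ and $0 \le r+i-j \le d$, and vanishes otherwise; the nonzero entries thus lie on the $d+1$ ``cyclic diagonals'' indexed by $k = 0, 1, \ldots, d$.

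Next, I would expand $\det M$ as a signed sum over permutations $\pi \in S_r$. A permutation $\pi$ contributes only when $\pi(j) \equiv j + k_j \pmod r$ for some sequence $(k_j) \in \{0, 1, \ldots, d\}^r$. The most obvious such permutations are the cyclic shifts $\pi_k : j \mapsto (j+k) \bmod r$ for $k \in \{0, 1, \ldots, d\}$, corresponding to taking all $k_j = k$. Each $\pi_k$ is the $k$-th power of the standard $r$-cycle, hence has signature $(-1)^{k(r-1)}$, and its contribution is
\[
(-1)^{k(r-1)} \, \Bigl(\prod_{j=0}^{r-1} \sigma^j(a_k)\Bigr) \, X^{rk} = (-1)^{k(r-1)} N(a_k) X^{rk},
\]
the factor $X^{rk}$ arising from the $k$ columns $j = r-k, \ldots, r-1$ where the row indices wrap around.

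The main obstacle is the combinatorial step showing that these are the only contributing permutations. Concretely, one must prove that when $d < r$, any bijection $j \mapsto (j + k_j) \bmod r$ of $\mathbb Z/r\mathbb Z$ with $k_j \in \{0, 1, \ldots, d\}$ forces the $k_j$'s to be equal. I would approach this by combining the identity $\sum_j (\pi(j) - j) = 0$ in $\mathbb Z$, which translates into a divisibility condition on $\sum_j k_j$, with a local analysis at each transition $k_{j-1} \ne k_j$ to rule out non-constant sequences. Once this combinatorial lemma is in hand, summing the $d+1$ cyclic contributions and reconciling the signature $(-1)^{k(r-1)}$ with the $(-1)^{r(d-k)}$ appearing in the statement yields the claimed expression for $\mathcal N(P)$.
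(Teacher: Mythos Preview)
The paper does not supply its own proof of this proposition: it is simply quoted from Jacobson with a reference, so there is nothing to compare your argument to on that front. Let me therefore focus on the internal logic of your proposal.

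Your strategy reduces the computation of $\det m_P$ to a purely combinatorial claim: if $d<r$ and $j\mapsto (j+k_j)\bmod r$ is a bijection of $\mathbb Z/r\mathbb Z$ with each $k_j\in\{0,1,\dots,d\}$, then the sequence $(k_j)$ is constant. This claim is \emph{false}. For instance, with $r=3$ and $d=2$ the choice $(k_0,k_1,k_2)=(0,1,2)$ gives $\pi(0)=0$, $\pi(1)=2$, $\pi(2)=1$, the transposition $(1\;2)$; with $r=4$ and $d=2$ the choice $(k_0,k_1,k_2,k_3)=(0,2,0,2)$ gives the transposition $(1\;3)$. In either case the corresponding product of matrix entries is nonzero whenever the relevant $a_i$'s are, so the Leibniz expansion of $\det m_P$ genuinely contains terms beyond those coming from the cyclic shifts $\pi_k$. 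Your proposed ``local analysis at each transition $k_{j-1}\ne k_j$'' cannot succeed, because non-constant admissible sequences exist.

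What actually happens is that these extra contributions cancel in the determinant, but proving that cancellation is a different (and substantially harder) argument than the one you sketch; it is not a consequence of any rigidity of the individual permutations. If you want to push through a direct matrix computation you will need to organise the non-cyclic terms and exhibit the cancellation explicitly, or else bypass the Leibniz expansion entirely (e.g.\ by diagonalising the $\sigma$-circulant structure after base change, or by exploiting multiplicativity of $\mathcal N$ together with a factorisation of $P$ over an extension). A minor side remark: in the decomposition $P=\sum_i X^iP_i$ with $P_i\in k[X^r]$, one has $P_i=\sigma^{-i}(a_i)$ rather than $P_i=a_i$; this does not affect the shape of your argument but would need correcting in any final write-up.
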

This Proposition gives a direct way to compute $\n(P)$: this is done by computing the norms of its $d$ coefficients. Since all the conjugates of an element of $k$ can be computed in $\tilde O(r^2)$ operations in $k^\sigma$, and the product of $r$ elements of $k$ requires $\tilde{O}(r^2)$ operations in $k^\sigma$, the norm of an element of $k$ can be computed in $\tilde{O}(r^2)$ operations in $k^\sigma$. Hence, by Proposition \ref{prop:jac}, we get an algorithm to compute $\n(P)$ in $\tilde{O}(dr^2)$ operations in $k^\sigma$ when $r < d$.

Let us now address the case $d \geq r$. We use the fact that $\n(P)$ is the determinant of multiplication by $X^r$ on $D_P$, seen as a $k[X^r]$-module. Let $t \in k$ be a primitive element over $k^\sigma$, and let $\pi_t \in k^\sigma[X^r]$ be its minimal polynomial over $k^\sigma$. Let $R_0 \in k^\sigma[X^r]$ be a polynomial of degree $n > d/r$. Let $R$ be the polynomial obtained by composition: $R = \pi_t\circ R_0$. We work in the ring $\mathcal{A} = k^\sigma[X^r]/R$. 

The idea is the following: if $R$ is irreducible, then $\mathcal{A}$ is a field extension of $k^\sigma$, and there is a natural embedding of $k$ into $\mathcal{A}$, mapping $t$ to $R_0$. Then we can write the matrix of multiplication by $P$ in $k[X,\sigma]$ seen as a module over $k[X^r]$, and map it to a matrix with coefficients in $\mathcal{A}$. Then we can compute the determinant of this matrix, which is the image $\nu$ of the norm of $P$ by the map $k[X^r] \rightarrow \mathcal{A}$. Since it is known to be a polynomial with coefficients in $k^\sigma$ of degree $d$, and since $[\mathcal{A}:k^\sigma] > d$, the coefficients of the $\n(P)$ are exactly the coefficients of $\nu$ written in the canonical basis of $\mathcal{A}$.

Actually, all of the above still holds if $\mathcal{A}$ is not a field, except that we may not use algorithms for determinants over fields to compute $\nu$. However, we can still obtain this determinant efficiently by computing the Hermite normal form of the matrix of multiplication by $P$ in the Euclidean domain $\mathcal{A}$. So in practice, all we have to do is write the matrix of multiplication by $P$ as a matrix with coefficients in $k[X^r]$. Write $P = P_0 + P_1 X + \cdots + P_{r-1}X^{r-1}$. As stated in the proof of Lemma \ref{norm-otherdef}, in the canonical basis $1, X, \ldots, X^{r-1}$, the matrix of multiplication by $P$ is:

$$\begin{pmatrix}
P_0 & X^r\sigma(P_{r-1}) & \ldots & \ldots & X^r \sigma^{r-1}(P_1)\\
P_1 & \sigma(P_0) & \ddots & \ddots & \vdots \\
\vdots & \ddots & \ddots & \ddots & \vdots \\
\vdots & \ddots & \ddots & \ddots & X^r\sigma^{r-1}(P_{r-1})\\
P_{r-1} & \cdots & \cdots & \cdots & \sigma^{r-1}(P_0)
\end{pmatrix}.$$
We map this matrix to $\mathcal{A}$ by taking $X^r$ to its residue class modulo $R$, and $t$ to $R_0 \pmod R$. Then, we compute its determinant $\nu$ (using Smith normal form), and we can read the coefficients of $\n(P)$ on $\nu$.

If $P$ has degree $d$, the complexity of these operations is: $O(dr^2)$ operations to compute all the conjugates of the $P_i$'s under the action of the Frobenius. Multiplication by $X^r$ is free in $k[X^r]$. This yields a total of $O(d r^2)$ operations to compute the matrix, and then $O(dr^3)$ operations in $k^\sigma$ to get its determinant. Hence, $P$ can be computed in $O(dr^3)$ operations in $k^\sigma$.

To sum it up, if $r< d$, then we can compute $\n(P)$ in $O(dr^2)$, and if $r\geq d$, we can compute it in $O(dr^3)$ operations in $k^\sigma$.

\subsection{A fast factorization algorithm}

Let $P \in k[X,\sigma]$ be a monic polynomial. Our aim is to give an algorithm to compute a factorization of $P$ as a product of irreducible skew polynomials. The idea of the algorithm is to reduce that problem to the problem of factoring polynomials of type $(e)$ (using $\text{rgcd}$'s with factors of the norm of $P$) and then to factor polynomials of type $(e)$. For the sake of brevity, in the algorithms we will use the notation $A/B$ for the quotient of the right-division of $A$ by $B$.
\paragraph{Reduction to the type-$(e)$ case \\}
The following algorithm recursively computes the $\text{rgcd}$ of a polynomial $P$ with a central polynomial (whose irreducible factors are all irreducible factors of $\n(P)$) and writes it as a product of polynomials of type $(e)$ (for some integer $e$ depending on the factor).

\begin{algorithm}
\KwIn{$P \in k[X,\sigma]$, $(N_1, \ldots, N_m)$ irreducible such that $\n(P) = \prod N_i$, ordered by nondecreasing degree}
\KwOut{$P_{1,1}, P_{1,2}, \ldots, P_{1,m_1}, \ldots, P_{n,1}, \ldots, P_{n, m_n} \in k[X,\sigma]$ and $N_1, \ldots, N_n \in k^\sigma[X^r]$ irreducible such that $P = \prod_i \prod_j P_{i,j}$ and each $P_{i,j}$ has type $e_j$ and norm $N^{e_j}$}
$d_1 = \deg N_1$\;
\lFor{$1 \leq i \leq m-1$}
{$d_{i+1} = d_i + \deg N_{i+1}$\;}
$d = d_m$; \, $\delta = d/\log d$\;
$i = \min \{1 \leq j \leq m - 1 ~|~ d_j > d + \delta/2 \}$\;
\eIf{$[d - \delta/2, d + \delta/2] \cap \{ d_1, \ldots, d_{m-1}\} = \emptyset$}
{$j = m$\;
\While{ $j\geq i$}
{$P_j = \text{rgcd}(P,N_j)$\;
$j = j - \deg P_j/\deg N_j$\;
$P = P/P_j$\;
}
\Return{Type\_e\_Factorization$(P, (N_1, \ldots, N_{i-1}))$, $\{P_j ~|~ i \leq j \leq m\}$\; }
}{
$M = N_i \cdots N_m$\;
$Q_1 = \text{rgcd}(P,M)$;\,
$Q_2 = P/M$\;
\Return{Type\_e\_Factorization$(Q_2, (N_1, \ldots, N_i))$, Type\_e\_Factorization$(Q_1, (N_i, \ldots, N_m))$}\;
}
\caption{Type\_e\_Factorization}
\end{algorithm}

\paragraph{Factoring a polynomial of type $(e)$\\}
Let us now explain how to factor a polynomial $P$ of type $(e)$. Clearly, $\n(P) = N^e$ with $N \in k^\sigma[X^r]$ irreducible. In this case, we know that $P$ is a divisor of $N$, we write $PQ = N$ and will work with $k[X,\sigma]Q/(N)$ rather than $k[X,\sigma]/k[X,\sigma]P$. Let $R \in k[X,\sigma]Q/(N)$. Right-multiplication by $R$ is an endomorphism of $k[X,\sigma]Q/(N)$ that is a $E$-vector space of dimension $e$. Hence, there exist $\lambda_0, \ldots, \lambda_{e-1} \in E$ such that $R^e = \sum_{i=0}^{e-1} \lambda_i R^i$. Now assume that $F(T) = T^e - \sum_{i=0}^{e-1} \lambda_iT^i \in E[T]$ has a root $\alpha \in E$. Then $R - \alpha$ is a zero-divisor in $k[X,\sigma]Q/(N)$. Indeed, $\alpha$ is an eigenvalue of multiplication by $R$, so there exists some $S \in k[X,\sigma]Q/(N)$ such that $S(R-\alpha)$ is zero in $k[X,\sigma]Q/(N)$. Write $R = \tilde{R}Q$ and $S = \tilde{S}Q$, then $\tilde{S}(Q\tilde{R} - \alpha)$ is divisible by $P$, so that the right gcd of $Q\tilde R - \alpha$ is a divisor of $P$. Moreover, if $\alpha$ is the only eigenvalue of the multiplication by $R$ (with multiplicity one), then this divisor is irreducible. We will see that this happens with good probability.\\
Once we get an irreducible factor, we can proceed recursively to factor $P$. However, we can also use a slightly more efficient trick relying on the knowledge of an irreducible factor. Assume we know an irreducible right factor $P_1$ of $P$, and write $P = P_2 P_1$. Let $R \in k[X,\sigma]$ and let $A = \text{rgcd}(P, P_1QR)$. Now let $B = \text{llcm}(A,P_1) = \tilde{B}P_1$. Since $P$ is a right multiple of both $P_1$ and $A$, $B$ is a divisor of $P$. Hence, $\tilde{B}$ is a divisor of $P_2$. In general, $A$ and $\tilde{B}$  should have the same degree as $P_1$, yielding an irreducible factor of $P_2$. The precise probability study will appear in \S \ref{complexity}.
The following two algorithms describe how to factor a polynomial $P$ of type $(e)$: the first one finds one irreducible factor of $P$, and the second one performs the "lcm trick" to factor $P$ as a product of irreducibles given one irreducible right factor.

\begin{algorithm}[H]
\KwIn{$(P,N) \in k[X,\sigma]\times k^\sigma[X ^r]$ such that of type $(e)$, $\n(P) = N^e$ and $N$ is irreducible}
\KwOut{An irreducible right-divisor of $P$}
$E = k^\sigma[X^r]/(N)$\;
$Q = N/P$\;
\While{true}{
  $\tilde{R} = \text{RandomElement}(k[X,\sigma]/k[X,\sigma]P)$\;
  $R_0 = \tilde RQ$\;
  \lFor{$0 \leq i \leq e-1$}{$R_{i+1} = R_0 R_i$\;}
  Find $\lambda_0,\ldots, \lambda_{e-1} \in E$ such that
  $R_e = \sum_{i=0}^{e-1}{\lambda_i R_i}$\;
  $F(T) = T^e - \sum_{i=0}^{e-1} \lambda_i T^i$\;
  \If{$F$ has a simple root $\alpha$ in $E$}
    {$P_1 = \text{rgcd}(P, Q\tilde{R} - \alpha)$\;
    \Return{$P1$}\;}
}
\caption{FirstFactor}
\end{algorithm}

\begin{algorithm}[H]
\KwIn{$(P,N,P_1) \in k[X,\sigma]\times k^\sigma[X ^r]$ such that of type $(e)$, $\n(P) = N^e$, $N$ is irreducible and $P_1$ is an irreducible right factor of $P$}
\KwOut{Irreducible polynomials $P_1, \ldots, P_e$ such that $P = P_e\cdots P_1$}
$Q = N/P$\;
\For {$1\leq i \leq e-1$}{
  \While{true}{
    $R = \text{RandomElement}(k[X,\sigma]/k[X,\sigma]P)$\;
    $A = \text{rgcd}(P,P_1QR)$\;
    $\tilde{B} = \text{llcm}(P_1,A)/P_1$\;
    \If{$\deg B = \deg P_1$}{
      $P_i = \tilde{B}$\;
      \textbf{break}\;
    }
  }
}
\Return {$P_1, \ldots, P_e$\;}
\caption{FactorStep}
\end{algorithm}

Glueing together the three previous algorithms, we get a complete 
factorization algorithm. We assume that the function Factorization 
returns the factorization of a (commutative) polynomial as a product of 
irreducible polynomials ordered by their degrees.

\begin{algorithm}[H]
\KwIn{$P \in k[X,\sigma]$}
\KwOut{A list of irreducible polynomials $(P_1, \ldots, P_m)$ such that $P = P_m\cdots P_1$}
$N = \n(P)$\;
$N_1 \cdots N_m =$ Factorization$(N)$\;
$(G_{1,1}, \ldots, G_{n,m_n}) = \text{Type\_e\_Factorization}(P, (N_1, \ldots, N_m))$\;
\For{$1 \leq i \leq m$}{
\For{$1 \leq j \leq m_i$}{$P_{i,j,1}, \ldots, P_{i,j,e_{ij}} = \text{FactorizationStep}(G_{i,j}, G_i)$\;}}
\Return{$(P_{i,j,l})$\;}
\caption{SkewFactorization}
\end{algorithm}

\subsection{Complexity}
In this section, we analyze the complexity of the factorization algorithm. The complexity will be expressed in terms of the degree $d$ of the skew polynomial that is to be factored, the degree $r$ of $k/k^\sigma$, and the cardinal $q$ of $k^\sigma$.
\subsubsection{Complexity of the steps}
Let us detail the complexity of the steps of our factorization algorithm.

\paragraph{Type-$(e)$-factorization}
We have the following lemma, giving the complexity of the algorithm \verb"Type_e_Factorization".
\begin{lem}
Let $P \in k[X,\sigma]$ and let $N_1, \ldots, N_m \in k^\sigma[X^r]$ be irreducible polynomials such that $P$ divides $N_1\cdots N_m$ in $k[X,\sigma]$. Then the algorithm \verb"Type_e_Factorization" applied to $P$ and $N_1, \ldots, N_m$ returns a correct result with $\tilde{O}(dr^3)$ operations in $k^\sigma$.
\end{lem}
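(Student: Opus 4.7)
My plan is to split the proof into two parts: correctness and complexity.

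For correctness, I would appeal to Lemma \ref{factornorm}: for any factorization $\n(P) = N' \cdot N''$ (with $N', N''$ products of some of the $N_j$'s), Lemma \ref{factornorm} supplies a factorization $P = P'' P'$ with $\n(P') = N'$ and $\n(P'') = N''$, and moreover $P'$ can be obtained as a right-divisor of $\text{rgcd}(P, N')$. Iterating this decomposition down to individual $N_j$'s would produce factors whose norms are each a power of a single irreducible, i.e.\ factors of type $(e)$ as required. I would then check that both branches of the algorithm implement this idea: the else-branch performs a balanced split via the central product $M$, computing $Q_1 = \text{rgcd}(P, M)$ and the complementary quotient $Q_2$, while the if-branch peels off the large factors one at a time starting from $N_m$. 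In the peeling loop, the identity $\deg P_j = e_j \deg N_j$ (where $P_j = \text{rgcd}(P, N_j)$ has norm $N_j^{e_j}$) justifies the update $j \leftarrow j - \deg P_j / \deg N_j$ and ensures that no factor is missed.

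For the complexity, let $T(d)$ denote the cost of a call on a polynomial of total degree $d$. Inside a single call, the only nontrivial operations are a constant number of rgcd's and Euclidean divisions on skew polynomials of degree at most $d$; by Theorem \ref{gcd-lcm} each of these costs $\tilde O(\SM(d,r)) = \tilde O(dr^2)$ operations in $k^\sigma$. In the else-branch, the two recursive subproblems have total degrees $d_1, d_2$ with $d_1 + d_2 = d$ and both bounded by $d/2 + \delta/2$, so the recursion tree is essentially balanced and has depth $O(\log d)$.

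The main obstacle is handling the if-branch, where no good split point exists inside the window $[d/2 - \delta/2,\,d/2 + \delta/2]$ around the midpoint. The key observation I would exploit is that, by the choice $\delta = d/\log d$, the absence of a cumulative degree in this window forces $\deg N_i > \delta$, and then the ordering of the $N_j$'s by nondecreasing degree forces $\deg N_j > \delta$ for all $j \geq i$. Consequently there can be at most $d/\delta = \log d$ such factors, so the while-loop executes only $O(\log d)$ times, each iteration costing $\tilde O(dr^2)$. Summed over the $O(\log d)$ levels of the recursion tree, the total running time is $\tilde O(dr^2 \cdot \mathrm{polylog}\,d) = \tilde O(dr^2)$, which is well within the claimed $\tilde O(dr^3)$ bound.
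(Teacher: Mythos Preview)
Your overall strategy matches the paper's: analyse the balanced (else) branch and the peeling (if) branch separately, observe that the absence of a cumulative degree inside the window $[d/2-\delta/2,\,d/2+\delta/2]$ forces each remaining $N_j$ to have degree exceeding $\delta=d/\log d$, hence only $O(\log d)$ of them survive, and then sum a per-level cost over $O(\log d)$ recursion levels.

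There is, however, a concrete error in your per-call cost estimate. The polynomials $N_j$ lie in $k^\sigma[X^r]$; the degrees $\deg N_j$ you are summing to $d$ are degrees in the variable $X^r$. Viewed as an element of the skew ring $k[X,\sigma]$, each $N_j$ has degree $r\cdot\deg N_j$ in $X$. Thus the product $M=N_i\cdots N_m$ in the else-branch, and each individual $N_j$ in the peeling loop, has skew degree up to $dr$, not $d$. The rgcd and Euclidean-division calls therefore cost $\tilde O(\SM(dr,r))=\tilde O(dr^3)$ rather than $\tilde O(\SM(d,r))=\tilde O(dr^2)$. This is precisely the cost the paper uses, and it is the origin of the $r^3$ in the statement; your claimed sharpening to $\tilde O(dr^2)$ does not follow from the argument as written. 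Once you replace $d$ by $dr$ in the degree bound for the central operands, your recursion yields the correct $\tilde O(dr^3)$.

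A minor secondary remark: the paper handles the recursion more carefully than ``essentially balanced, depth $O(\log d)$'', because the subproblem sizes are only bounded by $\tfrac{d}{2}(1+\tfrac{1}{\log d})$ rather than $d/2$. It writes the recurrence $C(d,r)\le 2\,C\!\big(\tfrac{d}{2}(1+\tfrac{1}{\log d}),r\big)+\tilde O(\SM(dr,r))$ and solves it by an explicit induction. For a $\tilde O$ bound your shortcut is acceptable, but the compounding factor $(1+1/\log d)$ across levels deserves at least a sentence of justification.
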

\begin{proof}
Let us prove the result by induction on $d$. Let $(N_1,a_1), \ldots, (N_m,a_m)$ be the irreducible polynomials that are given as arguments, and $\delta_i = \deg N_i$ for $1 \leq i \leq m$. We assument that the $N_i$'s are ordered so that the sequence of $\delta_i$ is nondecreasing. There are two cases to look at.

If there exists $1 \leq i \leq m$ and $1 \leq a \leq a_i$ such that
$$\sum_{j = 1}^{i-1} a_j \delta_j + a\delta_i \in \left[\frac{d}{2}\left(1 - \frac{1}{\log d}\right), \frac{d}{2}\left(1 + \frac{1}{\log d}\right)\right],$$
then we choose the minimal $(i,a)$ (for the lexicographical order) having this property. We write $N_l = N_i^a\prod_{j=1}^{i-1}N_j^{a_j}$, and $N_r = N/N_l$. Then we write $P_r = \text{rgcd}(P,N_r)$, and define $P_l$ as the quotient in the right-division of $P$ by $P_r$. The algorithm is then applied to $(P_l, N_l , (N_1,a_1), \ldots, (N_i, a))$ and $(P_r, N_r, (N_i, a_i - a), \ldots, (N_m, a_m))$.

The number of operations needed for this is denoted by $C(d,r)$. In this case, we have:
$$ C(d, r) \leq 2C\left( d\left (1 + \frac{1}{\log d}\right),r\right) + \tilde{O}(\SM(dr,r)).$$
Indeed, the operations we have to do before starting the recursive steps are: computing a product of (commutative) polynomials in $k^\sigma[X^r]$ such that the sum of their degrees is less than $d\left (1 + \frac{1}{\log d}\right)$, computing the right gcd of $P$ with a polynomial of degree less than $dr$, and dividing $P$ by this gcd. The most expensive part is the computation of the gcd, and it costs $\tilde{O}(\SM(dr,r))$.

In the other case, there is no $(i,a)$ such that
$$\sum_{j = 1}^{i-1} a_j \delta_j + a\delta_i \in \left[\frac{d}{2}\left(1 - \frac{1}{\log d}\right), \frac{d}{2}\left(1 + \frac{1}{\log d}\right)\right].$$
Hence, for $(i,a)$ such that $\sum_{j = 1}^{i-1} a_j \delta_j + a\delta_i > \frac{d}{2}\left(1 + \frac{1}{\log d}\right)$, we know that $\delta_i > \frac{d}{\log d}$, and there are at most $\log d$ such couples $(i,a)$. In this case, the algorithm is to compute $N_l$, $N_r$ as before, and then the successive gcd's of $P$ the $N_i$'s having the previous property, and apply the algorithm with the last quotient $P_l$ and $N_l$.

There are at most $\log d$ rgcd's of a skew polynomial of degree at most $d$ with skew polynomials of degree at most $dr$, which takes $\tilde{O}(\SM(dr,r))$ operations, and all the other computations are cheaper than this. Again, we have:
\begin{eqnarray*}
C(d, r) &\leq &C\left( d\left (1 + \frac{1}{\log d}\right),r\right) + \tilde{O}(\SM(dr,r)) \\
&\leq &2\cdot C\left( d\left (1 + \frac{1}{\log d}\right),r\right) + \tilde{O}(\SM(dr,r)).
\end{eqnarray*}
Let us assume that the $\tilde{O}(\SM(dr,r))$ appearing in the above inequality is $\leq cdr^3\log^\alpha d$ for some constants $c, \alpha$ (we use the fact that $\SM(d,r) = \tilde{O}(dr^2)$). We are going to show that there exists a constant $c'$ such that
$$C(d,r) \leq c' dr^3\log^{\alpha + 1}d.$$
We want to have:
$$C(d,r) \leq 2 c' \frac{d}{2}\left(1 + \frac{1}{\log d} \right) r^3 \log^{\alpha+1}\left(\frac{d}{2}\left(1 + \frac{1}{\log d} \right) \right) + cdr^3 \log^\alpha d.$$
This implies that:
$$C(d,r) \leq c'dr^3\log^{\alpha+1}d\left( 1 - \frac{\log 2}{\log d} + O\left( \frac{1}{\log^2 d}\right)\right)^{\alpha+1} + c'dr^3\log^{\alpha} d + c dr^3 \log^{\alpha +1}d.  $$
If we choose $c'$ such that $c' + c - c'(\alpha +1)\log 2 + O\left( \frac{1}{\log^2 d}\right) \leq 0$ for $d$ large enough, then induction shows that for $d$ large enough,
$$ C(d,r) \leq c' dr^3 \log^{\alpha + 1} d.$$
Since it is possible to choose such a $c'$, the proof is complete.
\end{proof}

\paragraph{FirstFactor}
We shall detail the complexity of all the steps of this algorithm. In the following, $P$ has type $(e)$ and norm $N^e$, with $e \leq r$. The degree of $N$ as an element of $k^\sigma[X^r]$ is $\delta$, so that the degree of $P$ is $\delta e$.
\begin{enumerate}
\item Compute $Q \in k[X,\sigma]$ such that $PQ = N$. This Euclidean division can be done with complexity $\SM(dr,r)$. Note that this step is done only once even if the loop fails to find a divisor.
\item Choose a random element $R \in k[X,\sigma]/k[X,\sigma]P$ and compute $RQ, \ldots, (RQ)^{e}$ modulo $N$. This requires $e$ multiplications of skew polynomials of degree $\delta r$ plus one reduction modulo $N$ at each step. After having remarked the reduction modulo $N$ of a skew polynomial is equal to its reduction modulo $N$ in the ring of usual polynomials, we see that it costs only $\tilde O(\delta^2 r)$ operations in $k^\sigma$. The whole cost of this step is then $O(e \cdot \SM(\delta r,r))$.
\item Find a linear dependence between the powers of $RQ$ of the following
for:
\begin{equation}
\label{eq:FT}
\sum_{i=0}^e a_i (RQ)^i = 0.
\end{equation}
where all $a_i$'s are in $E$.
Even though the element $(RQ)^i$ naturally live in a space of dimension 
$r^2$ over $E$, we know the first $e$ of them are linearly dependent, 
and we can work in a vector space of dimension $e$ over $E$ by 
projection. Hence, the complexity of this step is $\delta \cdot \MM(e)$. 
\item Check whether the polynomial $F(T) = \sum_{i=0}^e a_i T^i$ (where 
the $a_i$'s are defined by formula \eqref{eq:FT}) has a root in $E$. For 
this, it is enough to compute the gcd of $F$ with $T^{\# E} - T$. Noting 
that $\# E = q^\delta$ with $q = \# k^\sigma$, we can first compute 
$T^{\# E}$ modulo $F(T)$ by first raising $T$ to the $q$-th power modulo 
$F(T)$ (using classical fast exponentiation) and then performing $O(\log 
\delta)$ modular compositions. Using Corollary 5.2 of \cite{ku}, this can 
be done in 
$\tilde O(\delta \log^2 q + e^{1+\varepsilon} (\delta \log q)^{1+o(1)})$
bit operations, for all $\varepsilon > 0$ (the first term corresponds to 
the fast exponentiation and the second to the modular compositions). 
It then remains to compute 
the gcd of two polynomials over $E$ of degree $\leq e$, which can be 
achieved with $\tilde O(e \delta)$ more operations in $k^\sigma$.
\item Compute the right gcd of $P$ with a skew polynomial of degree 
$\delta r$, which costs $\tilde{O}(\SM(\delta r,r))$ operations in 
$k^\sigma$. Note that this step is done only once even if the loop fails 
to find a divisor.
\end{enumerate}

Since any operation in $k^\sigma$ requires $\tilde O(\log q)$ bit
operations, the total complexity of this algorithm is 
$$\tilde{O}(\SM(\delta r,r) \cdot e \log q + \MM(e) \cdot \delta \log q
+ \delta \log^2 q + e^{1 + \varepsilon} (\delta \log q)^{1 + o(1)})$$
bit operations. Using $\SM(n,r) = \tilde O(nr^2)$ and $\MM(n) = O(n^3)$
and noting that $e \leq r$, this becomes
$$\tilde{O}(\delta e r^3 \log q + \delta \log^2 q + e^{1 + \varepsilon} 
(\delta \log q)^{1 + o(1)})$$
for all $\varepsilon > 0$.
We will see in \S \ref{proba} why the probability of failure is bounded from below independently on the data of the problem.

\paragraph{FactorStep}
This algorithm computes a factorization of $P$ (still of type $(e)$) when $P$, knowing a factor of $P$. The next irreducible factor is computed with one rgcd and one llcm between polynomials of degree at most $\delta r$. This operation may fail (in which case, we repeat it with a new random input) but we will see in the next section that the propability of failure is very small. Hence, in order to compute the complexity, it is safe to assume that failures never append. As it is shown in \S \ref{gcd-lcm}, this has complexity $\tilde{O}(\SM(\delta r,r))$. Hence the complexity of this step is $\tilde{O}(\delta r^3)$.

\subsubsection{Global complexity}
Let us sum up all the previous step complexities to give the complexity of the whole factorization algorithm.
\begin{thm}\label{complexity}
The algorithm \verb"SkewFactorization" runs in
$$\tilde{O}(dr^3 \log q + d \log^2 q + d^{1+\varepsilon} (\log q)^{1 + o(1)} + F(d,k^\sigma))$$
bit operations to factor a skew polynomial of degree $d$.
Here, $F(d,K)$ denotes the complexity of the factorization of a 
(commutative) polynomial of degree $d$ over the finite field $K$.
\end{thm}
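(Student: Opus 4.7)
The plan is to sum the bit-complexities of the four phases of \verb"SkewFactorization" --- computing $N = \n(P)$, factoring $N$ in $k^\sigma[X^r]$, running \verb"Type_e_Factorization", and finally applying \verb"FirstFactor" plus \verb"FactorStep" to each type-$(e)$ block --- and check that the total fits within the announced bound. Three of the four phases have already been analyzed in isolation in the preceding pages, so the task reduces to packaging those bounds and handling one combinatorial sum.

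First I would invoke the pre-computed per-phase costs. Computing $\n(P)$ uses $O(dr^3)$ operations in $k^\sigma$, i.e.\ $\tilde O(dr^3 \log q)$ bit operations. Factoring $\n(P)$ costs $F(d, k^\sigma)$ bit operations, since by Lemma \ref{norm-otherdef} the norm has degree $\deg P = d$ as an element of $k^\sigma[X^r]$. The reduction to type-$(e)$ blocks via \verb"Type_e_Factorization" was shown to require $\tilde O(dr^3)$ operations in $k^\sigma$, giving $\tilde O(dr^3 \log q)$ bit operations. Each of these contributes a term that already appears in the target bound.

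The heart of the argument is the final phase. Let the $G_{i,j}$ denote the type-$(e)$ blocks; write $\delta_i = \deg N_i$ and $e_{i,j}$ so that $\n(G_{i,j}) = N_i^{e_{i,j}}$, hence $\deg G_{i,j} = \delta_i e_{i,j}$ and, crucially, $\sum_{i,j} \delta_i e_{i,j} = \deg P = d$. By the per-call analyses just carried out, one call to \verb"FirstFactor" on $G_{i,j}$ costs
\[
\tilde O\bigl(\delta_i e_{i,j}\, r^3 \log q \;+\; \delta_i \log^2 q \;+\; e_{i,j}^{1+\varepsilon}(\delta_i \log q)^{1+o(1)}\bigr)
\]
bit operations, and the subsequent $e_{i,j}-1$ iterations of \verb"FactorStep" contribute an additional $\tilde O(\delta_i e_{i,j}\, r^3 \log q)$. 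Summing the first two terms over $(i,j)$ by the combinatorial identity yields exactly $\tilde O(dr^3 \log q)$ and $\tilde O(d \log^2 q)$. For the third term I would use superadditivity of $x \mapsto x^{1+\varepsilon'}$ on $\mathbb R_{\geq 0}$: after absorbing the $o(1)$ in the exponent into any slightly larger $\varepsilon'$ (permissible since the theorem is universally quantified over $\varepsilon > 0$), the bound $e_{i,j}^{1+\varepsilon}\delta_i^{1+o(1)} \leq (e_{i,j}\delta_i)^{1+\varepsilon'}$ gives $\sum_{i,j}(e_{i,j}\delta_i)^{1+\varepsilon'} \leq \bigl(\sum_{i,j} e_{i,j}\delta_i\bigr)^{1+\varepsilon'} = d^{1+\varepsilon'}$, which absorbs back into the $d^{1+\varepsilon}(\log q)^{1+o(1)}$ term of the statement.

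The only real subtlety is accounting for the rejection loops inside \verb"FirstFactor" and \verb"FactorStep"; both may discard a random sample and retry, so strictly speaking the announced bound is an expected (or high-probability) running time. Invoking the probability estimates established in \S \ref{proba}, which show that each retry succeeds with probability bounded below by a universal constant, suffices to absorb the retries into the $\tilde O$. Apart from that probabilistic bookkeeping, the proof is a straightforward assembly of per-step bounds glued together by the telescoping identity $\sum_{i,j} \delta_i e_{i,j} = d$.
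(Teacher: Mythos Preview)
Your proposal is correct and follows essentially the same route as the paper: both proofs sum the four phase costs (norm, commutative factorization, \verb"Type_e_Factorization", then \verb"FirstFactor"/\verb"FactorStep" on each block) and collapse the last phase via $\sum_{i,j}\delta_i e_{i,j}=d$. You are in fact slightly more careful than the paper on two points---you justify the bound on $\sum e_{i,j}^{1+\varepsilon}(\delta_i\log q)^{1+o(1)}$ via superadditivity of $x\mapsto x^{1+\varepsilon'}$, and you make the expected-time caveat for the rejection loops explicit---whereas the paper simply asserts the summed bound and postpones the probability analysis to a remark.
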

\begin{proof}
Computing the norm of $P \in k[X,\sigma]$ of degree $d$ takes $O(dr^3)$ 
operations in $k^\sigma$. Factoring the norm $\n(P)$ (that has degree 
$d$ as an element of $k^\sigma[X^r]$) takes by definition 
$F(d,k^\sigma)$ operations in $k^\sigma$. Then, the 
algorithm \verb"Type_e_Factorization" runs in $\tilde{O}(dr^3)$ 
operations in $k^\sigma$. Let $P_1, \ldots, P_m$ be the factors of $P$ 
obtained after \verb"Type_e_Factorization". Assume that $P_i$ has type 
$e_i$ and degree $\delta_ie_i$. Then for each $i$, the factorization of 
$P_i$ takes $\tilde{O}(\delta_i e_i r^3 \log q + \delta_i e_i \log^2 q
+ e^{1 + \varepsilon} (\delta \log q)^{1 + o(1)})$ bit operations (it uses 
\verb"FirstFactor" and \verb"FactorStep"). So, to factor $P$ given its 
``type-(e)-factorization'', we need $\tilde{O}(dr^3 \log q + d \log^2 q
+ d^{1 + \varepsilon} (\log q)^{1+o(1)})$ bit
operations. Putting all the steps together, we get the desired complexity.
\end{proof}

\begin{rmq}
Of course, this result is true provided that the probabilities of success of the probabilistic parts of the algorithm are bounded from below independently of $d$ and $r$. This is what we will show in the next part.
\end{rmq}

\subsubsection{Probability of finding a factor}\label{proba}
The function \texttt{FactorStep} finds an irreducible factor of $P$ whenever the random endomorphism  "multiplication by $R$" has exacly one (simple) eigenvalue in $E$. 
By Corollary \ref{random}, if $R$ is uniformly distributed in $k[X,\sigma]Q/(N)$, then $m_R$ is uniformly distributed in $\End(k[X,\sigma]Q/(N))$. Therefore, we want to evaluate the probability for an endomorphism of a $E$-vector space of dimension $e$ to have a unique eigenvalue in $E$.

Let $B_d$ be the probability that a $d \times d$ matrix with coefficients in $E$ has $0$ as a simple, unique eigenvalue in $E$. Obviously, setting $q = \#E$, this is $\frac 1 q$ times the probability that a $d\times d$ matrix has a simple, unique eigenvalue in $E$. We can write:
$$ q^{d^2}B_d = \#\mathbf{P}(E^d)\cdot q^{d-1}q^{(d-1)^2}\cdot A_{d-1} = \frac{1 - \frac{1}{q^d}}{1-\frac{1}{q}} \cdot \frac{A_{d-1}}{q}. $$
where $A_i$ denotes the probability that a $i \times i$ matrix with coefficients in $E$ has \emph{no} eigenvalue in $E$.\\
Let us now detail how to obtain a bound on $A_i$. By \cite{pra}, Theorems 4.1 and 4.2, we get the formula for the generating series:
$$ \sum_{i=0}^{+\infty} A_i z^i = \frac{1}{1-z}G(z)$$
where $G(z) = \prod_{i=1}^{+ \infty}\left( 1 - \frac{z}{q^i}\right)^{q-1}$. If we write $G(z) = \sum_{i} C_i z^i$, then for all $i \geq 0$, $A_i = \sum_{j=0}^i (-1)^j C_j$. 
\begin{lem}
We have the following formulas:
\begin{itemize}
\item $A_0 = C_0 = 1$
\item $C_1 = 1$
\item $C_2 = \frac{q}{2(q+1)}$
\end{itemize}
\end{lem}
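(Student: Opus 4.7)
The first assertion $A_0 = C_0 = 1$ is immediate: evaluating the infinite product at $z = 0$ gives $G(0) = 1$, so $C_0 = 1$, and then the stated formula $A_i = \sum_{j=0}^i (-1)^j C_j$ at $i = 0$ reads $A_0 = C_0$. (Alternatively, $A_0 = 1$ is just the vacuous statement that a $0\times 0$ matrix has no eigenvalue.) No serious work is needed here.

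My plan for $C_1$ and $C_2$ is to pass to the logarithm and then re-exponentiate, truncating at order $2$. Using $\log(1-w) = -\sum_{k \geq 1} w^k/k$ and swapping the two (absolutely convergent, for small enough $z$) summations gives the closed form
\[
\log G(z) \;=\; (q-1)\sum_{i \geq 1} \log(1 - z/q^i) \;=\; -(q-1)\sum_{k \geq 1} \frac{z^k}{k}\sum_{i \geq 1} \frac{1}{q^{ik}} \;=\; -(q-1)\sum_{k \geq 1} \frac{z^k}{k(q^k-1)}.
\]
In particular $[z]\log G(z) = -1$ and $[z^2]\log G(z) = -\tfrac{1}{2(q+1)}$. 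Applying the standard identity $\exp(az + bz^2 + \cdots) = 1 + az + \bigl(b + \tfrac{a^2}{2}\bigr) z^2 + O(z^3)$ then reads off the first two nontrivial coefficients of $G$ in one step.

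The only subtle point, and the one I would treat carefully, is the sign convention: the formula $A_i = \sum_{j=0}^i (-1)^j C_j$ is only consistent with the identity $\sum_i A_i z^i = G(z)/(1-z)$ if one interprets the $C_j$ as the coefficients of $G(-z) = \prod_{i\geq 1}(1 + z/q^i)^{q-1}$ (equivalently, $C_j = (-1)^j[z^j]G(z)$). With that convention, the analogous log-expansion gives $\log G(-z) = z + \tfrac{z^2}{2(q+1)} + O(z^3)$, and exponentiation yields $C_1 = 1$ and $C_2 = \tfrac{1}{2(q+1)} + \tfrac{1}{2} = \tfrac{q}{2(q+1)}$, exactly as claimed. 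The main obstacle is therefore purely cosmetic — pinning down the sign convention so that $C_1$ comes out equal to $+1$ rather than $-1$; everything after that is a routine Taylor expansion.
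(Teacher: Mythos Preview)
Your argument is correct, and it takes a genuinely different route from the paper. For $C_2$ the paper expands the product directly: picking off the coefficient of $z^2$ in $\prod_{i\geq 1}(1 + z/q^i)^{q-1}$ gives
\[
C_2 \;=\; \sum_{i \geq 1} \frac{(q-1)(q-2)}{2\,q^{2i}} \;+\; \sum_{i < j} \frac{(q-1)^2}{q^{i+j}},
\]
and the two geometric sums are then evaluated by hand. Your log-then-exponentiate method packages both the ``same factor'' and ``cross'' contributions into the single $a^2/2$ term coming from $\exp$, which is slicker and would extend more mechanically to higher $C_k$; the paper's computation is more elementary (nothing beyond the binomial expansion and geometric series) and keeps the combinatorics of the product visible. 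Your diagnosis of the sign convention is also correct: the paper is silently using $C_j = (-1)^j[z^j]G(z)$, which is the only reading consistent with both $A_i = \sum_{j\leq i}(-1)^j C_j$ and $\sum_i A_i z^i = G(z)/(1-z)$, and with that convention your values $C_1 = 1$ and $C_2 = q/(2(q+1))$ drop out exactly as you wrote.
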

\begin{proof}
The first two assertions follow easily from identifying the coefficients of $1$ and $z$ in the power series $G$. For the third formula, identifiying the coefficient of $z^2$ gives:
$$ C_2 = \sum_{i=1}^{+\infty} \frac{(q-1)(q-2)}{2q^{2i}} + \sum_{i < j} \frac{(q-1)^2}{q^{i+j}}.$$
The result then follows from the usual formulas for sums of geometric progressions.
\end{proof}
Next, remark that:
$$\sum_{i=0}^{+\infty} C_i = \prod_{i \geq 1}\left( 1 + \frac{1}{q^i}\right)^{q-1} \quad \text{and} \quad \sum_{i=0}^{+\infty} (-1)^iC_i = \prod_{i \geq 1}\left( 1 - \frac{1}{q^i}\right)^{q-1}.$$
Combining both expressions, we get:
$$2 \cdot \sum_{i=0}^{+\infty} C_{2i+1} = \prod_{i \geq 1}\left( 1 + \frac{1}{q^i}\right)^{q-1} -  \prod_{i \geq 1}\left( 1 - \frac{1}{q^i}\right)^{q-1}.$$

Studying the function $q \mapsto \prod_{i \geq 1}( 1 + q^{-i})^{q-1} - 
\prod_{i \geq 1}( 1 - q^{-i})^{q-1}$ are non-decreasing, we find that the
the sum $\sum_{i=0}^{+\infty} C_{2i+1}$ is smaller than its limit when $q$
goes to infinity:
$$ \sum_{i=0}^{+\infty} C_{2i+1} \leq \frac{1}{2}\left( e - \frac{1}{e}\right).$$
Now, it is clear that for all $i \geq 0$, $A_i \geq C_0 + C_2 - \sum_{i = 0}^{+\infty} C_{2i +1} = 1 + \frac{q}{2(q+1)}- \frac{1}{2}\left( e - \frac{1}{e}\right)$. Note that this quantity is $\geq 0.15$ for all $q$, and $\geq 0.3$ when $q \geq 23$.

\subsubsection{Probability of finding another factor}\label{proba2}

As usual, we assume that $P$ is a right-divisor of $N \in k^\sigma[X^r]$ irreducible, with $\n(P) = N^e$ and $\deg N = \delta$. We have seen that once we know an irreducible factor of $P$, there is an easy way to factor it without using FirstFactor again.  The following lemma makes this more precise:
\begin{lem}
Let $P = P_2P_1$ with $P_1$ irreducible and $P_2$ reducible, and let $R$ be a random variable following the uniform distribution on $k[X, \sigma]$. Let $A = \text{rgcd}(P, P_1QR)$ and $B = \text{llcm}(A,P_1) = \tilde{B}P_1$. Then the probability that $\tilde{B}$ is an irreducible right factor of $P_2$ is at least $1 - \frac{1}{q^{\delta(e-1)}}$
\end{lem}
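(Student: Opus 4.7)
The plan is to use the Morita equivalence to translate this probability statement into a linear-algebra question about a uniformly random matrix over $E$, and then carry out the probability calculation directly.

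First I would fix the dictionary. By Corollary~\ref{matrixisomorphism}, together with the fact (noted after Corollary~\ref{morita}) that $D_P$ is a sum of $e$ copies of a simple $\varphi$-module, one may identify $D_P$ through Morita equivalence with $E^e$ and $\End_\varphi(D_P)$ with $\mathcal{M}_e(E)$, where $E = k^\sigma[X^r]/(N)$ has cardinality $q^\delta$. Under this identification the sub-$\varphi$-module $F_1 = k[X,\sigma]P_1/k[X,\sigma]P$ becomes a hyperplane $H \subset E^e$ of $E$-codimension one, since $P_1$ is irreducible of norm $N$. By Corollary~\ref{random}, as $R$ varies uniformly over $D_P$, the endomorphism $m_{QR}$ is uniformly distributed on $\End_\varphi(D_P)$ and thus corresponds to a uniform matrix $M \in \mathcal{M}_e(E)$.

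Next I would translate the target event into linear algebra. Since $m_{QR}$ is a $\varphi$-module endomorphism it commutes with the left $k[X,\sigma]$-action, and therefore the sub-$\varphi$-module attached to $A = \text{rgcd}(P, P_1 Q R)$ equals $m_{QR}(F_1)$, corresponding to $M(H)$. By the earlier lemma on rgcd's and llcm's (rgcd $\leftrightarrow$ sum, llcm $\leftrightarrow$ intersection of submodules), the submodule attached to $B = \text{llcm}(A, P_1)$ is then $M(H) \cap H$; and under the isomorphism $F_1 \simeq D_{P_2}$ given by right-multiplication by $P_1$, this in turn corresponds to the submodule of $D_{P_2}$ attached to $\tilde B$. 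Consequently $\tilde B$ is an irreducible right factor of $P_2$ (equivalently $\n(\tilde B) = N$) precisely when $\dim_E\bigl(M(H) \cap H\bigr) = e - 2$.

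To estimate this probability, I would choose an $E$-basis making $H$ the span of the first $e-1$ coordinates and block-decompose the restriction $M|_H$ into an upper block $M_{11} \in \mathcal{M}_{e-1}(E)$ and a lower row block $M_{21} \in \Hom(H, E) \simeq E^{e-1}$. A short direct computation then yields $M(H) \cap H = M_{11}(\ker M_{21})$. The dominant failure mode is the event $M_{21} = 0$, i.e., that $M$ preserves $H$: it has probability exactly $1/q^{\delta(e-1)}$ since $M_{21}$ is uniform on $E^{e-1}$ and independent of $M_{11}$. When $M_{21} \neq 0$, $\ker M_{21}$ is a fixed subspace of $E$-dimension $e-2$ in $H$, and $M_{11}|_{\ker M_{21}}$ is uniformly distributed among $E$-linear maps $E^{e-2} \to E^{e-1}$; such a map is injective (giving image dimension $e-2$) with high probability.

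The hard part will be to control the residual failure modes -- $M_{21} = 0$ with $\text{rank}(M_{11}) \neq e-2$, and $M_{21} \neq 0$ with $M_{11}$ failing to be injective on $\ker M_{21}$ -- and verify that these, combined with the main contribution $1/q^{\delta(e-1)}$ from $M_{21} = 0$, yield a total failure probability of at most $1/q^{\delta(e-1)}$. These residual events are rank-deficiency events for uniform matrices over $E$ of explicit sizes, so their probabilities can be bounded by the standard counts for matrices of prescribed rank; combining them carefully with the main term then gives the claimed estimate.
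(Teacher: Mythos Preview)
Your translation to linear algebra via Morita equivalence and Corollary~\ref{random}, and your identification of the success event as $\dim_E(M(H)\cap H)=e-2$, are correct and follow the same route as the paper. You are in fact more careful than the paper here: the paper's proof invokes a ``projection'' onto $H$ rather than the intersection with $H$, and then simply asserts that the only failure mode is $m_{QR}(H)=H$, which it bounds by $q^{-\delta(e-1)}$.

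The genuine gap is in your last paragraph. The residual mode ``$M_{21}\neq 0$ but $M_{11}|_{\ker M_{21}}$ not injective'' is \emph{not} small relative to the stated bound. Conditioned on $M_{21}\neq 0$, the restriction $M_{11}|_{\ker M_{21}}$ is a uniform $E$-linear map $E^{e-2}\to E^{e-1}$, and such a map fails to be injective with probability $1-\prod_{j=2}^{e-1}(1-q^{-j\delta})$, which is of order $q^{-2\delta}$; for $e\ge 3$ this already exceeds $q^{-\delta(e-1)}$. Concretely, for $q^\delta=2$ and $e=3$ your block decomposition gives total failure probability
\[
\tfrac14\cdot\tfrac{7}{16}\;+\;\tfrac34\cdot\tfrac14\;=\;\tfrac{19}{64}\;>\;\tfrac14\;=\;q^{-\delta(e-1)}.
\]
So no ``careful combination'' of the rank-deficiency counts will recover the exponent $e-1$. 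What your analysis does yield cleanly is a success probability of at least $\prod_{j\ge 2}(1-q^{-j\delta})=1-O(q^{-2\delta})$, which is ample for the complexity argument but strictly weaker than the lemma as written; the paper's own proof reaches the stated bound only by overlooking the extra failure modes you correctly isolated.
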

\begin{proof}
We work in $k[X,\sigma]/N$. Remark then that $AQ = \text{rgcd}(N,P_1QRQ)$ and that $B = \text{llcm}(AQ,P_1Q)$. We see the multiplication by $RQ$ as an endomorphism $m_{RQ}$ of $k[X,\sigma]Q/N$. Since $R$ follows the uniform distribution, so does $m_{RQ}$. Remark that $m_{RQ}(k[X,\sigma]P_1Q/N)$ is a sub-$\varphi$-module of $k[X,\sigma]/Q$. It is actually equal to $k[X,\sigma]AQ/N$. Indeed, $k[X,\sigma]P_1QRQ \subset k[X,\sigma]AQ$, and $AQ \in k[X,\sigma]P_1QRQ/N$ by definition. Then, we remark that the projection along $k[X,\sigma]P_2$ onto $k[X,\sigma]P_1Q/N$ maps the sub-$\varphi$-module $UQk[X,\sigma]/N$ to $\text{llcm}(U,P_1)Qk[X,\sigma]/N$. In particular, $BQk[X,\sigma]/N$ is the projection of $m_{RQ}(P_1Qk[X,\sigma]/N)$ onto $k[X,\sigma]P_1Q/N$. Therefore, $\tilde{B}$ is an irreducible right-factor of $P_2$ unless $m_{RQ}(k[X,\sigma]P_1Q/N) = k[X,\sigma]P_1Q/N$. Since $m_{RQ}$ is uniformly distributed in the endomorphisms of $D_P$ and $k[X,\sigma]P_1Q/N$ has cardinal $q^{de(e-1)}$ while $D_P$ has cardinal $q^ {de^2}$, this happens with probability $\frac{1}{q^{d(e-1)}}$.
\end{proof}

\subsection{Other algorithms related to factorizations}
In this section, we give some more details on other algorithms that could have interesting applications. The theoretical material on which they rely is only what appears in previous sections.
\subsubsection{Counting factorizations}
This algorithm uses the formula given in corollary \ref{countfac}, and is recursive. First, we give the algorithm computing the number of factorizations of a skew polynomial as a function of its type:\\

\begin{algorithm}[H]
\KwIn{$(\delta, (e_1, \ldots, e_n))$ an integer and a nonincreasing sequence of integers}
\KwOut{The number of factorizations a skew polynomial whose norm is a power of an irreducible of degree $\delta$ and that has dual type $(a_1, \ldots, a_n)$}
$h=1$\;
\For{$1 \leq i \leq n$}{
  \If{$(i = n)$ or $(a_i > a_{i+1})$}{
  $j = \min \{l~|~a_l = a_i\}$\;
   $h=h \times \text{CountFactorizationStep}(\delta, (a_1, \ldots, a_{i-1}, a_i -1, a_{i+1}, \ldots, a_n))$\\$\hphantom{h=h}\times (q^{\delta l} + \cdots + q^{\delta i})$\;
  }
}
\Return{$h$\;}
\caption{CountFactorizationsStep}
\end{algorithm}
The following algorithm gives the number of factorizations of a given skew polynomial as the product of its leading coefficient and monic irreducible skew polynomials. We assume that we have a function \verb"DualType" that computes the dual of a nondecreasing sequence of integers.\\

\begin{algorithm}[H]
\KwIn{$P \in k[X,\sigma]$}
\KwOut{The number of factorizations of $P$ as a product of its leading coefficient and monic irreducible polynomials}
$N = \n(P)$\;
$([N_1,d_1], \cdots, [N_t, d_t]) = \text{Factorization}(N)$\;
$h=1$; \, $\tau = 0$\;
\For{$1 \leq i \leq t$}{
  $\delta=\deg(N_i)$\;
  $A = \text{rgcd}(P, N_i)$;\, $j=1$\;
  \While{$A \neq 1$}{
    $e_j = \frac{\deg A}{\delta}$\;
    $P= P/A$\;
    $A=\text{rgcd}(P,N_i)$\;
    $j= j+1$\;
    }
  $\tau = \tau + e_1 + \cdots + e_{j-1}$\;
  $a = \text{DualType}(e_1, \ldots, e_{j-1})$\;
  $h= \frac{1}{(e_1 + \cdots + e_{j-1}) !} \cdot \text{CountFactorizationsStep}(\delta, a)$\;
}
$h = \tau! \times h$\;
\Return{$h$}\;
\caption{CountFactorizations}
\end{algorithm}

\subsubsection{Random factorizations}
For some applications of skew polynomials, it could be interesting to have an algorithm that returns a factorization of a given skew polynomial $P$, following the uniform distribution on all factorizations of this skew polynomial. In this section, we describe such an algorithm.

Since we do not want to simply list all factorizations of $P$ and pick one randomly (because there can be so many factorizations, so this would have a very bad complexity), we want an algorithm that can simulate the uniform distribution on the right-factors of $P$. Let us first explain how to do this when $P$ has type $(e)$.

Assume $P$ has type $(e)$ and norm $N^e$, and let $E = k^\sigma[X^r]/(N)$. As usual, $d = \deg N$ and $q = \#k^\sigma$. Suppose that we know one irreducible right-factor $P_0$ of $P$. This factor correponds to a $E$-line in $D_P = k[X,\sigma]/k[X,\sigma]P$. The orbit of this line under the action of $\End_\varphi(D_P)$ is the set of all $E$-lines in $D_P$, corresponding to all irreducible right-divisors of $P$. We now want to find one element $u \in \End_\varphi(D_P)$ such that $\End_\varphi(D_P) = E(u)$, so that in order to simulate the uniform distribution on the irreducible right-divisors of $P$, it is enough to simulate the uniform distribution on the polynomials of degree $<e$ with coefficients in $E$ and compute the image of our line under the action of $M(u)$, where $M$ is the polynomial that we get. Let us estimate the probability to find such a $u$. Since $\End_\varphi(D_P) \simeq \mathcal{M}_e(E)$, we want to know the probability that, for a fixed nonzero vector $x \in E^e$, an element $u \in \mathcal{M}_e(E)$ admits an element of the line $Ex$ as a cyclic vector. The number of $u \in \mathcal{M}(E)$ that have this property is $(q^{d}-1)(q^{de} - q^d)\cdots (q^{de} -q^{d(e-1)})$. Hence the probability to find a $u$ with the desired property is
$$ \left( 1 - \frac{1}{q^d}\right)^2 \left(1 - \frac{1}{q^{2d}}\right) \cdots \left(1 - \frac{1}{q^{de}} \right),$$
which is greater than $0.53\left( 1 - \frac{1}{q^d} \right)$ by \cite{pra}, Lemma 2.2 (applied to computing the value of the infinite product when $q^d = 4$).

Once we know how to randomly get a divisor of a polynomial of type $(e)$, the idea of the algorithm to get a random factorization of any skew polynomial $P$ is the following: compute the type of $P$, and randomly choose an irreducible divisor $N_i$ of $\n(P)$ (with uniform distribution). Compute $Q = \text{rgcd}(N_i,P)$ and randomly find a right-irreducible divisor $P_1$ of $Q$ with the previous algorithm. Let $Q_1P_1 = Q$. Compute the type of $Q_1$. Keep the factor $P_1$ with the same probability as the ratio of irreducible right-divisors of $Q$ yielding a left-divisor of the type of $Q_1$ (this can be done counting the factorizations of $Q$ and $Q_1$, which depends only on their types). Write $P = RP_1$ and randomly factor $R$. By construction, it is clear that the factorization we get in the end is uniformly distributed among all factorizations of $P$.

\subsection{Implementation}

All algorithms presented in this article were implemented is {\sc sage}.
The source code is available on the CETHop website at the URL:

\begin{center}
\url{http://cethop.math.cnrs.fr/documents/skew_polynomials-sage.tgz}
\end{center}

\noindent
A {\sc magma} package is also available; it includes some of the 
algorithms presented here (and, in particular, the factorization 
algorithm). It can be downloaded at the URL:

\begin{center}
\url{http://cethop.math.cnrs.fr/documents/skew_polynomials.m}
\end{center}

%

\bibliographystyle{amsalpha}
\bibliography{fastfactor}
\end{document}